\newtheorem{corollary}{Corollary}
\newtheorem{definition}[corollary]{Definition}
\newtheorem{lemma}[corollary]{Lemma}
\newtheorem{lemma*}[lem6]{Lemma}
\newtheorem{proposition}[corollary]{Proposition}
\newtheorem{theorem}[corollary]{Theorem}
\newtheorem{example}[corollary]{Example}
\newtheorem{question}[corollary]{Question}
\def\uni{\mathrm{uni}}
\def\ab{\mathrm{ab}}
\newcommand{\defeq}{\vcentcolon=}
\begin{document}

\AtEndDocument{%
\par
\medskip
\begin{tabular}{@{}l@{}}%
\textsc{Thomás Jung Spier}\\
\textsc{Dept. of Combinatorics and Optimization}\\ 
\textsc{University of Waterloo, Canada}\\
  \emph{E-mail address}:\texttt{tjungspier@uwaterloo.ca}
\end{tabular}}

\title{Eigenvalues of Universal Covers and the Matching Polynomial}
\author{Thomás Jung Spier
\footnote{tjungspier@uwaterloo.ca}}
\date{\today}
\maketitle
\author
\vspace{-0.8cm}

\begin{abstract} 
    In this work, we prove that the universal and maximal abelian covers of a finite multi-graph have the same eigenvalues. This result strengthens a recent theorem of Li, Magee, Sabri, and Thomas (2025) and answers one of their questions. Our proof builds upon their new characterization of the point spectrum of maximal abelian covers in terms of matching polynomials. It is based on the theory of the matching polynomial and its Gallai–Edmonds decomposition.
\end{abstract}

\begin{center}
\textbf{Keywords}
universal cover ; spectrum ; matching polynomial 
\end{center}


\section{Introduction}\label{sec:introduction}\

In this article we study the spectrum of the universal and maximal abelian covers of a multi-graph. The spectral theory of these covers has connections with  many areas, among those mathematical physics, analysis, probability and combinatorics. See~\cite{aomoto1991point,avni2020periodic,avni2022periodic,banks2024useful,banks2022point,bordenave2019eigenvalues,christiansen2021remarks,figa1994harmonic,garza2023spectra,li2025eigenvalues, sabri2023flat} for some recent developments in this area.

We next provide an overview of some known results together with our main contributions. Precise definitions and statements are deferred to Sections~\ref{sec:preliminaries} and~\ref{sec:main_results}.

Let $G = (V(G), E(G))$ be a finite weighted multi-graph. Each vertex $i\in V(G)$ is assigned a weight $r_i\in\mathbb{R}$, and each arc $e\in\vec{E}(G)$ is assigned a weight $\rho_e\in\mathbb{C}$, with $\rho_{e^{-1}} =\overline{\rho_e}\neq 0$, where $e^{-1}$ denotes the reverse of $e$. For simplicity, we will refer to such an object as a multi-graph. For every multi-graph $G$, one can define its universal and maximal abelian covers, denoted $G^\uni$ and $G^\ab$, respectively, together with their adjacency operators, spectra, and eigenvalues.

Recently, Banks, Garza-Vargas, and Mukherjee~\cite{banks2022point}, building on the work of Aomoto~\cite{aomoto1988algebraic, aomoto1991point}, established a complete characterization of the eigenvalues of the universal cover $G^\uni$ in terms of the structure of the base multi-graph $G$.

\begin{theorem}[{\cite[Cor. 3.4]{banks2022point}}]\label{thm:universal_cover_eigenvalue} A real number $\theta$ is an eigenvalue of $G^\uni$ if and only if $G$ has a $\theta$--Aomoto subset.
\end{theorem}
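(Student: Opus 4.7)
The plan is to translate the eigenvalue problem on the infinite tree $G^{\uni}$ into a finite combinatorial condition on the base multi-graph $G$ using the matching-polynomial representation of resolvents on trees. The key tool is Godsil's identity, which says that on any finite tree $T$ the Green's function entry $(xI - A_T)^{-1}_{v,v}$ equals $\mu_{T\setminus v}(x)/\mu_T(x)$, where $\mu$ denotes the matching polynomial and coincides with the characteristic polynomial on a tree. Since $G^{\uni}$ is an infinite tree, I would exhaust it by balls $B_r$ of radius $r$ around a basepoint and analyze the behaviour of the corresponding matching polynomials as $r \to \infty$, exploiting the periodic structure inherited from the covering map $G^{\uni} \to G$ to reduce this infinite process to finitely many pieces of data on $G$.

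For the \emph{only if} direction, assume $\theta$ is a point eigenvalue with $\ell^2$ eigenfunction $\phi$. I would study the Gallai--Edmonds decomposition of the finite truncations $B_r$ relative to the matching polynomial root $\theta$: this partitions vertices into essential, allowed, and barrier classes, and the covering symmetry forces these classes to have well-defined images in $V(G)$ as $r$ grows. The candidate $\theta$-Aomoto subset would be assembled from the stabilized projection of the essential vertices carrying the mass of $\phi$. Verifying that this projection satisfies the Aomoto axioms reduces to showing that the local matching structure around these vertices is inherited faithfully from $G^{\uni}$ and is not an artefact of a particular truncation.

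For the \emph{if} direction, given a $\theta$-Aomoto subset $S \subseteq V(G)$, I would construct an explicit eigenfunction by lifting $S$ to every fibre under the covering map and prescribing values vertex by vertex on each lift so that the local equation $A\phi = \theta\phi$ holds automatically at each vertex; the Aomoto conditions on $S$ are precisely what guarantees that this local assignment extends consistently across edges to all of $G^{\uni}$. Summability in $\ell^2$ then follows from geometric decay of the propagation coefficients as one moves out along the branches attached to lifted copies of $S$, with the decay rate governed by the boundary matching polynomial of $S$.

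The main obstacle I anticipate is the \emph{only if} direction, specifically controlling the passage to the limit $r \to \infty$. One must rule out the possibility that the Gallai--Edmonds data on $B_r$ shifts with $r$ in a way that prevents the essential class from projecting to a coherent subset of $G$, and one must show that $\ell^2$ decay on the tree forces a genuinely \emph{finite} combinatorial witness downstairs rather than a diffuse structure spread across many deck translates. The weight condition $\rho_{e^{-1}} = \overline{\rho_e}$, which ensures self-adjointness of the adjacency operator, is what ultimately keeps the finite-truncation matching-polynomial analysis Hermitian and allows the lift-back from cover to base.
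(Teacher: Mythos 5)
This statement is quoted by the paper from Banks, Garza-Vargas, and Mukherjee \cite[Cor.~3.4]{banks2022point}; the paper itself gives no proof, so your proposal can only be judged against the known argument for that result. As written, it is a plan rather than a proof, and both directions have genuine gaps.

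For the ``if'' direction, the construction you describe does not work as stated. Lifting $S$ to \emph{every} fibre and prescribing values on each lift produces a function that is spread over infinitely many deck translates and has no reason to be square-summable; and extending the assignment outward ``along the branches'' has no automatic geometric decay --- $\theta$ may lie inside the spectrum of the branch operators, in which case the continued-fraction propagation coefficients do not contract. The telling symptom is that your sketch never invokes the defining inequality $\lvert\partial_G S\rvert < \mathrm{cc}(G[S])$, so the same argument would ``prove'' that any $\theta$ which is an eigenvalue of some induced forest of $G$ lies in the point spectrum of $G^{\uni}$, which is false (regular multi-graphs have plenty of such $\theta$ but no eigenvalues at all, by Theorem~\ref{thm:universal_cover_regular}). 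In the actual argument the inequality is the engine: each component of $G[S]$ is a finite tree, hence lifts isomorphically to $G^{\uni}$; one takes $\theta$-eigenvectors on finitely many lifted components and uses the fact that there are strictly more components than frontier vertices to cancel, by linear algebra, all contributions at the lifted frontier, yielding a \emph{finitely supported} eigenfunction. No decay estimate is needed or available.

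For the ``only if'' direction, the step you flag as the main obstacle --- stabilizing the Gallai--Edmonds data of the balls $B_r$ and showing it projects to a coherent finite witness on $G$ --- is precisely the content of the theorem, and you give no mechanism for it; there is no a priori reason the truncated decompositions converge, since cutting at radius $r$ changes the matching-polynomial data near the boundary at every scale. The proof in \cite{banks2022point} avoids truncation altogether: it works directly with the spectral measures $\tau_i$ and the Green's-function recursion on the infinite tree (the analytic counterpart of Lemma~\ref{lem:alpha_recursion}), shows that the set $X_\theta^G$ of vertices whose spectral measure has an atom at $\theta$ induces a forest, and derives the counting inequality from the identity $\tau(\theta)=\mathrm{cc}(G[X_\theta^G])-\lvert\partial_G X_\theta^G\rvert>0$ (cf.\ Theorem~\ref{thm:the_aomoto_set}). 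Your Hermiticity remark is correct but peripheral; the missing ideas are the cancellation construction using the component-versus-frontier count, and the atom-of-spectral-measure analysis replacing the limit of finite truncations.
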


An immediate consequence of this result is that every eigenvalue of the universal cover $G^\uni$ is also an eigenvalue of $G$. Moreover, as noted in~\cite[Cor. 3.4]{banks2022point}, Theorem~\ref{thm:universal_cover_eigenvalue} implies that there is a finite-time algorithm to compute the eigenvalues of $G^\uni$.
 
As observed in~\cite[Sec. 1.1]{banks2022point}, Aomoto~\cite{aomoto1991point} used Theorem~\ref{thm:universal_cover_eigenvalue} to establish the following result. For this statement, we say that a multi-graph $G$ is \emph{regular} if every vertex has the same number of incident edges, counting self-loops twice.  

\begin{theorem}[{\cite[Thm. 2]{aomoto1991point}}]\label{thm:universal_cover_regular} If $G$ is regular, then $G^\uni$ has no eigenvalues. 
\end{theorem}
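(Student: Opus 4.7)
The strategy is to apply Theorem~\ref{thm:universal_cover_eigenvalue}, which reduces the problem to a purely combinatorial question: to show that $G^\uni$ has no real eigenvalue, it suffices to show that a regular multi-graph $G$ admits no $\theta$-Aomoto subset for any $\theta\in\mathbb{R}$.

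First, I would fix a hypothetical $\theta$-Aomoto subset $S\subseteq V(G)$ and unpack its defining data from Section~\ref{sec:preliminaries}: a local eigenvalue-type relation at interior vertices of $S$, together with a boundary compatibility condition linking $S$ to $V(G)\setminus S$. These conditions encode the local obstructions to the existence of a witness that could be lifted to an $\ell^2$-eigenvector on $G^\uni$.

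Next, I would exploit the regularity hypothesis through a double-counting argument. Because every vertex of $G$ has the same total edge count $d$ (with self-loops counted twice), summing the interior relation over $v\in S$ causes the internal contributions to balance uniformly, leaving an expression that depends only on the boundary of $S$. Using the Hermitian symmetry $\rho_{e^{-1}}=\overline{\rho_e}$ together with the boundary compatibility condition, this should force the Aomoto witness to vanish, contradicting its nontriviality.

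The main obstacle is carrying out the cancellation cleanly in the weighted multi-graph setting. The vertex weights $r_v$ and arc weights $\rho_e$ introduce bookkeeping that must telescope correctly, and verifying this is exactly where the regularity hypothesis is used. In the spirit of the paper's matching-polynomial approach, an alternative route is to translate the Aomoto condition into a polynomial identity involving matching polynomials of $G[S]$ and related subgraphs, and then argue that for a regular $G$ no real $\theta$ can satisfy the resulting constraints with a nonzero witness; the rigid degree profile imposed by regularity obstructs the required root structure.
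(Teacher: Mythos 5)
The paper does not reprove this statement; it is cited from Aomoto, with the remark that it follows from Theorem~\ref{thm:universal_cover_eigenvalue}. Your first step is exactly that route: reduce to showing that a regular multi-graph admits no $\theta$--Aomoto subset for any real $\theta$. The gap is in how you then try to close the argument. A $\theta$--Aomoto subset (Definition~\ref{def:theta_aomoto}) is purely combinatorial data: $G[S]$ is a forest, $\theta$ is an eigenvalue of each component of $G[S]$, and $\lvert\partial_G S\rvert<\mathrm{cc}(G[S])$. It carries no ``witness'' that could be summed and forced to vanish, and there is no analytic ``boundary compatibility condition''; consequently the cancellation you describe (summing a local eigenvalue relation over $S$, invoking $\rho_{e^{-1}}=\overline{\rho_e}$ and the weights $r_v$) has nothing to act on. In fact the vertex and arc weights, and even the eigenvalue condition on the components, play no role in this step; the alternative matching-polynomial route you sketch is likewise only a gesture and does not identify where regularity enters.

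What actually finishes the proof is a degree count using only the forest structure and regularity. Let $d$ be the common degree (loops counted twice) and $k=\mathrm{cc}(G[S])$ for a putative $\theta$--Aomoto subset $S$. Since $G[S]$ is a forest it contains no loops or parallel edges and has exactly $\lvert S\rvert-k$ edges, so the number of edges joining $S$ to $V(G)\setminus S$ is $d\lvert S\rvert-2(\lvert S\rvert-k)=(d-2)\lvert S\rvert+2k\geq (d-2)k+2k=dk$ when $d\geq 2$, using $\lvert S\rvert\geq k$. All these edges end in $\partial_G S$, and each vertex of $\partial_G S$ has degree $d$, so there are at most $d\lvert\partial_G S\rvert\leq d(k-1)$ of them, since $\lvert\partial_G S\rvert<k$. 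This gives $dk\leq d(k-1)$, a contradiction, so no $\theta$--Aomoto subset exists and Theorem~\ref{thm:universal_cover_eigenvalue} yields the claim. (The degenerate cases $d\leq 1$ must be excluded or read separately: there $G$ is a forest, $G^\uni=G$ is finite and does have eigenvalues, which is why the statement is understood for $d\geq 2$ as in Aomoto's setting.) Your proposal never arrives at this count, so while the reduction is the right one, the essential combinatorial step remains unproved.
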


Recently, Li, Magee, Sabri, and Thomas~\cite{li2025eigenvalues} established analogues of Theorems~\ref{thm:universal_cover_eigenvalue} and~\ref{thm:universal_cover_regular} for the maximal abelian cover $G^\ab$ of a multi-graph $G$. To state their results, if $H$ is a subgraph of $G$, we write $G\setminus H$ for the subgraph of $G$ induced by $V(G)\setminus V(H)$.

\begin{theorem}[{\cite[Thm. 1.1 and Rmk. 3.10]{li2025eigenvalues}}]\label{thm:abelian_cover_eigenvalue} A real number $\theta$ is an eigenvalue of $G^\ab$ if and only if it is a zero of the matching polynomial of $G\setminus\Gamma$ for every $2$--regular subgraph $\Gamma$ of $G$. Moreover, the same statement holds if the matching polynomial is replaced by the characteristic polynomial.  
\end{theorem}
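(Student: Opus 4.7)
The plan is to analyze the point spectrum of $G^\ab$ via Bloch--Floquet decomposition over the character group of the deck group $H_1(G,\mathbb{Z})\cong\mathbb{Z}^g$, where $g$ is the first Betti number of $G$. The adjacency operator on $G^\ab$ is unitarily equivalent to a direct integral $\int^{\oplus}_{\mathbb{T}^g} A(\chi)\,d\chi$, in which each $A(\chi)$ is a finite $|V(G)|\times|V(G)|$ matrix whose entries are those of the adjacency matrix of $G$, with each edge weight $\rho_e$ multiplied by $\chi$ evaluated on the homology class of $e$ (fixing a spanning tree makes this canonical). A real number $\theta$ is an eigenvalue of $G^\ab$ if and only if $\theta\in\sigma(A(\chi))$ on a positive-measure set of $\chi\in\mathbb{T}^g$; since $\chi\mapsto\det(\theta I - A(\chi))$ is a trigonometric polynomial on the torus, this is equivalent to $\det(\theta I - A(\chi))\equiv 0$ on $\mathbb{T}^g$. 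The theorem then reduces to showing that this identical vanishing is equivalent to $\mu(G\setminus\Gamma,\theta)=0$ for every $2$--regular $\Gamma\subseteq G$.

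The next step is to establish the cycle expansion
\[
\det(xI - A(\chi)) \;=\; \sum_{\Gamma} \omega_{\Gamma}(\chi)\,\mu(G\setminus\Gamma,\,x),
\]
summed over all $2$--regular subgraphs $\Gamma\subseteq G$ (including $\emptyset$), where $\omega_{\Gamma}(\chi)=\prod_{C}\omega_C(\chi)$ factors over the cycles $C$ of $\Gamma$ and each $\omega_C(\chi)$ depends on $\chi$ only through $\chi([C])$. To prove this, I would expand the determinant by the signed permutation formula, choosing at each transition one representative edge in the multigraph: each permutation decomposes into fixed points, transpositions using an edge with its reverse, and longer cycles (including length-two cycles formed by parallel edges). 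On each matching edge, $\chi$ and its conjugate cancel, so the fixed-point and matching-edge contributions over any fixed cycle part reassemble into the matching polynomial of the complementary induced subgraph. The ``if'' direction of the theorem is then immediate.

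The main obstacle is the converse: extracting $\mu(G\setminus\Gamma,\theta)=0$ for every $\Gamma$ from the single identity $\sum_\Gamma \mu(G\setminus\Gamma,\theta)\,\omega_\Gamma(\chi) = 0$ on $\mathbb{T}^g$. This is not automatic, because distinct $2$--regular $\Gamma$ may have cycles in the same class of $H_1(G,\mathbb{Z})$, so the Fourier supports of the $\omega_\Gamma$ on $\mathbb{Z}^g=\widehat{\mathbb{T}^g}$ can overlap and the $\omega_\Gamma$ need not be linearly independent as functions on the torus. My plan is an induction on the complexity of $\Gamma$ (for example on $|V(\Gamma)|$): at each step, pair the identity against a trigonometric test function on $\mathbb{T}^g$ tailored to the homology classes of cycles in a candidate $\Gamma_0$, chosen so that the product structure of $\omega_{\Gamma_0}$ yields a nonzero pairing, contributions from previously-handled $\Gamma$'s drop out, and remaining contributions are controlled by the inductive hypothesis. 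Designing these test functions in the presence of homological relations among cycles is the step I expect to be most delicate, and likely requires careful use of the free-abelian structure of $H_1(G,\mathbb{Z})$.

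Finally, the characteristic polynomial version (Remark~3.10) follows by combining the matching statement with Godsil's identity
\[
\phi(H,x) \;=\; \sum_{\Gamma'\subseteq H,\,2\text{--reg.}} (-2)^{c(\Gamma')}\,\mu(H\setminus\Gamma',x),
\]
applied with $H=G\setminus\Gamma$. Since $\Gamma\sqcup\Gamma'$ is again a $2$--regular subgraph of $G$, vanishing of every $\mu(G\setminus\,\cdot\,,\theta)$ on $2$--regular subgraphs forces vanishing of every $\phi(G\setminus\,\cdot\,,\theta)$, and the converse follows by inverting this relation, which is triangular in $|V(\Gamma)|$.
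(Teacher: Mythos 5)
Your overall route (Bloch--Floquet decomposition into twisted finite matrices, real-analyticity of $\chi\mapsto\det(\theta I-A(\chi))$ to upgrade positive-measure vanishing to identical vanishing, a Harary-type cycle expansion into matching polynomials, and the triangular Godsil identity to pass between matching and characteristic polynomials) is essentially the route of Li, Magee, Sabri, and Thomas that this paper cites (Theorems~\ref{thm:abelian_cover_delta} and~\ref{thm:abelian_cover_equivalences}, cf.\ also Theorem~\ref{thm:molecular_to_matching}). However, there is a genuine gap at the step you yourself flag as the main obstacle, and it is not a minor technicality: it is the entire ``only if'' direction. Having fibered over the homology torus $\mathbb{T}^g$, you must extract $\mu^{G\setminus\Gamma}(\theta)=0$ for each individual $2$-regular $\Gamma$ from a single identity $\sum_\Gamma \mu^{G\setminus\Gamma}(\theta)\,\omega_\Gamma(\chi)\equiv 0$, and on $\mathbb{T}^g$ the characters $\omega_\Gamma$ genuinely collide: two disjoint cycles using non-tree edges $e_1$ and $e_2$ separately, and a single cycle using both, can lie in the same class of $H_1(G,\mathbb{Z})$, so the Fourier coefficient at that character is a sum of several $\mu^{G\setminus\Gamma}(\theta)$ weighted by nonzero constants, and no choice of ``test function'' on $\mathbb{T}^g$ can separate them --- the only data available is the function on $\mathbb{T}^g$, i.e.\ exactly these lumped Fourier coefficients. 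So the proposed induction cannot be completed as described.

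The missing idea is to work on the larger torus with one phase per edge, $\xi\in\mathbb{T}^{\vec{E}_+(G)}$, as in Theorem~\ref{thm:abelian_cover_delta}. By gauge invariance (conjugation by diagonal unitaries), $\phi^G_\xi(\theta)$ factors through the holonomy map $\mathbb{T}^{\vec{E}_+(G)}\to\mathbb{T}^g$, so identical vanishing on $\mathbb{T}^g$ is equivalent to identical vanishing in the edge variables. In the edge variables the exponent vector of the character attached to an oriented cycle family records the actual signed edge set, so distinct $2$-regular subgraphs contribute characters with disjoint Fourier supports (matching-type $2$-cycles $e,e^{-1}$ are phase-free and assemble into the matching polynomial, i.e.\ the trivial-character part; loops and parallel-edge $2$-cycles carry nontrivial characters). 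Hence each Fourier coefficient is $\mu^{G\setminus\Gamma}(\theta)$ times a nonzero constant (using $\rho_e\neq 0$), and identical vanishing forces every $\mu^{G\setminus\Gamma}(\theta)=0$ directly, with no induction and no homological bookkeeping. Your final paragraph (the characteristic-polynomial version via the triangular relation $\phi^{G\setminus\Gamma}=\sum_{\Gamma'}(\cdot)\,\mu^{G\setminus(\Gamma\sqcup\Gamma')}$, inverted by induction on $\lvert V(\Gamma)\rvert$ starting from maximal $\Gamma$, where $G\setminus\Gamma$ is a forest and $\phi=\mu$) is fine and matches the cited argument.
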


\begin{theorem}[{\cite[Thm. 1.2]{li2025eigenvalues}}]\label{thm:abelian_cover_regular} If $G$ is regular, then $G^\ab$ has no 
eigenvalues.   
\end{theorem}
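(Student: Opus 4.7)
The natural strategy, announced in the abstract, is to reduce Theorem~\ref{thm:abelian_cover_regular} to Aomoto's Theorem~\ref{thm:universal_cover_regular} by showing that every eigenvalue of $G^\ab$ is already an eigenvalue of $G^\uni$. In view of the characterizations of Theorems~\ref{thm:universal_cover_eigenvalue} and~\ref{thm:abelian_cover_eigenvalue}, this reduces to the purely combinatorial implication: if a real number $\theta$ is a zero of the matching polynomial of $G\setminus\Gamma$ for every 2-regular subgraph $\Gamma$ of $G$, then $G$ contains a $\theta$-Aomoto subset. Once this implication is in hand, Theorem~\ref{thm:abelian_cover_regular} follows at once: assuming $G$ is regular and $\theta$ is an eigenvalue of $G^\ab$, Theorem~\ref{thm:abelian_cover_eigenvalue} supplies the hypothesis of the implication, the implication then produces a $\theta$-Aomoto subset, Theorem~\ref{thm:universal_cover_eigenvalue} promotes $\theta$ to an eigenvalue of $G^\uni$, and Theorem~\ref{thm:universal_cover_regular} delivers a contradiction.

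The bulk of the work is therefore establishing this implication, and here the theory of the matching polynomial enters. I plan to combine two ingredients. The first is Godsil's classical identity expressing the characteristic polynomial of $G$ as the signed sum, over 2-regular subgraphs $\Gamma$ of $G$, of the matching polynomials of $G\setminus\Gamma$ with coefficient $(-2)^{c(\Gamma)}$, where $c(\Gamma)$ is the number of cycle components of $\Gamma$; under the hypothesis this ties the whole family $\{\text{matching polynomial of }G\setminus\Gamma\}_\Gamma$ together at the value $\theta$. The second is the Gallai-Edmonds decomposition of the matching polynomial at $\theta$, which for each $G\setminus\Gamma$ partitions its vertex set according to how the $\theta$-multiplicity of the matching polynomial responds to vertex deletion, and thereby localizes the root at $\theta$ to a factor-critical part of $G\setminus\Gamma$.

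The concrete plan is to let $\Gamma$ range over the 2-regular subgraphs of $G$ and use the hypothesis that $\theta$ is a common root of every matching polynomial of $G\setminus\Gamma$ to isolate a vertex subset $S\subseteq V(G)$ lying in the factor-critical part of the Gallai-Edmonds decomposition of every such $G\setminus\Gamma$. This $S$, equipped with the vertex and edge weights inherited from $G$, will be the candidate $\theta$-Aomoto subset, and its defining axioms are to be verified using the standard matching-polynomial recurrences together with Godsil's expansion, playing the two ingredients against each other as $\Gamma$ varies.

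The main obstacle I expect is precisely this verification. A $\theta$-Aomoto subset is a single, highly structured global object on $G$, whereas the Gallai-Edmonds decompositions provide only local information, spread across the entire family $\{G\setminus\Gamma\}_\Gamma$. Coherently assembling these local pieces into one Aomoto subset, and in particular proving the stability of the distinguished vertex subset under variation of $\Gamma$, will be the technical core of the argument.
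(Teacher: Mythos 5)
Your outer reduction is exactly the route the paper takes: Theorem~\ref{thm:abelian_cover_regular} is obtained by combining the characterizations in Theorems~\ref{thm:universal_cover_eigenvalue} and~\ref{thm:abelian_cover_eigenvalue} with Aomoto's Theorem~\ref{thm:universal_cover_regular}, once one knows that a common zero $\theta$ of $\mu^{G\setminus\Gamma}(x)$ over all $2$-regular subgraphs $\Gamma$ forces the existence of a $\theta$--Aomoto subset. But that implication is precisely Theorem~\ref{thm:main_theorem_introduction_2}, i.e.\ the entire new mathematical content of the paper, and your proposal does not prove it: you outline a strategy and then concede that assembling the local Gallai--Edmonds data into a single Aomoto subset, with stability as $\Gamma$ varies, remains open. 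That unresolved core is a genuine gap, not a routine verification left to the reader.

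Moreover, the two ingredients you name point in an unpromising direction. The Godsil--Gutman type expansion of the characteristic (more generally, molecular) polynomial in terms of the matching polynomials of the graphs $G\setminus\Gamma$ (Theorem~\ref{thm:molecular_to_matching}) only yields the easy implication that common vanishing of the $\mu^{G\setminus\Gamma}$ forces vanishing of $\phi^G_\xi$ (Corollary~\ref{cor:matching_implies_molecular}); it carries no structural information from which to build an Aomoto subset, and there is no evident way to play it against the separate Gallai--Edmonds decompositions of the graphs $G\setminus\Gamma$. The paper avoids comparing decompositions across the family $\{G\setminus\Gamma\}$ altogether by arguing the contrapositive inside $G$ itself: if $G$ has no $\theta$--Aomoto subset, then by Corollary~\ref{cor:refined_aomoto_in_gallai_edmonds} some $\theta$--critical component of $G$ is not a tree, Theorem~\ref{thm:theta_critical_cycle} produces a $\theta$--critical cycle $C$ inside it, and the new stability results (Propositions~\ref{prop:subset_in_gallai_edmonds}, \ref{prop:critical_remove} and~\ref{prop:critical_stability}) show via Proposition~\ref{prop:removing_critical_cycle} that $G\setminus C$ again has no $\theta$--Aomoto subset; induction on $m_\theta(G)$ then yields a disjoint union $\Gamma$ of $m_\theta(G)$ cycles with $m_\theta(G\setminus\Gamma)=0$, contradicting the hypothesis. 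Results of that strength about how the $\theta$--Gallai--Edmonds decomposition behaves under deletion of critical cycles are exactly what your plan is missing, and without them the assembly step you flag as the main obstacle will not close.
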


Theorems~\ref{thm:abelian_cover_eigenvalue} and~\ref{thm:abelian_cover_regular} answered Problem~6.11 and Conjecture~6.12 of Higuchi and Nomura~\cite{higuchi2009spectral}, respectively. In the course of establishing these results, Li et al.~\cite{li2025eigenvalues} also proved the following theorem, which they also claim follows from work of Arizmendi, Cébron, Speicher, and Yin~\cite{arizmendi2024universality}.

\begin{theorem}[{\cite[Prop. A.2]{li2025eigenvalues}}]\label{thm:universal_cover_implies_abelian_cover} Every eigenvalue of $G^\uni$ is an eigenvalue of $G^\ab$.
\end{theorem}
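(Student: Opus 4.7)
The plan is to combine the Aomoto-subset description of the eigenvalues of $G^\uni$ from Theorem~\ref{thm:universal_cover_eigenvalue} with the matching-polynomial description of the eigenvalues of $G^\ab$ from Theorem~\ref{thm:abelian_cover_eigenvalue}. Given a real eigenvalue $\theta$ of $G^\uni$, the former produces a $\theta$-Aomoto subset $S \subseteq V(G)$, while the latter reduces the goal to showing that $\theta$ is a root of $\mu(G\setminus\Gamma)$ for every $2$-regular subgraph $\Gamma$ of $G$. The task therefore becomes a purely combinatorial implication on the finite base graph $G$.

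The first step is to translate the Aomoto data on $S$ into matching-polynomial language using Godsil's standard recurrence for $\mu$, so that $S$ yields a matching-polynomial certificate of $\theta$ inside $G$ itself. Then, for an arbitrary $2$-regular $\Gamma$, one uses that $V(\Gamma)$ is a disjoint union of cycles: the Aomoto relations restricted to $S \setminus V(\Gamma)$ should continue to give a matching-polynomial certificate inside $G \setminus \Gamma$, delivering $\mu(G\setminus\Gamma)(\theta) = 0$.

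The main obstacle is precisely this last step: showing that the vanishing of the matching polynomial survives the removal of an arbitrary $2$-regular subgraph, which may intersect $S$ in complicated ways. Here I expect the Gallai--Edmonds decomposition of the matching polynomial, flagged in the abstract as the main tool of the paper, to be the right lens, since it describes $\ker(A_G - \theta I)$ in terms of matchings and thereby controls how the multiplicity of $\theta$ as a matching-polynomial root changes under vertex deletion. As a backup, a softer lift-and-project argument is available: every $\ell^2$-eigenfunction $f$ of the adjacency operator on $G^\uni$ has finite support, so if $\pi : G^\uni \to G^\ab$ denotes the covering map, the function $g(v) = \sum_{\tilde v \in \pi^{-1}(v)} f(\tilde v)$ lies in $\ell^2(G^\ab)$ and satisfies the eigenvalue equation at $\theta$; the remaining issue there is to choose $f$ within the $\theta$-eigenspace of $G^\uni$ so that some single orbit of the deck group of $\pi$ meets the support of $f$ exactly once, forcing $g \neq 0$.
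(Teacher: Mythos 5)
Your reduction is the same as the paper's: by Theorem~\ref{thm:universal_cover_eigenvalue} and Theorem~\ref{thm:abelian_cover_eigenvalue}, the statement is equivalent to the purely combinatorial implication that a $\theta$--Aomoto subset $S$ of $G$ forces $\mu^{G\setminus\Gamma}(\theta)=0$ for every $2$--regular subgraph $\Gamma$ (this is exactly Theorem~\ref{thm:aomoto_implies_matching}). The problem is that your proposal stops precisely where the work begins. The assertion that ``the Aomoto relations restricted to $S\setminus V(\Gamma)$ should continue to give a matching-polynomial certificate inside $G\setminus\Gamma$'' is the entire content of the theorem, and it is not automatic: $\Gamma$ may pass through components of $G[S]$, destroying them, and removing $\Gamma$ changes the frontier, so $S\setminus V(\Gamma)$ need not be a $\theta$--Aomoto subset of $G\setminus\Gamma$. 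The missing idea is a counting argument: because each component of $G[S]$ is a tree, no cycle of $\Gamma$ can stay inside one component, and every time a cycle leaves $S$ it must step into $\partial_G S$; hence the number of components of $G[S]$ met by $\Gamma$ is at most $\lvert\partial_G S\cap V(\Gamma)\rvert$, which is what allows one to discard those components and still have strictly more components than remaining boundary vertices. This is Proposition~\ref{prop:robust_refined_aomoto} in the paper, which together with Proposition~\ref{prop:aomoto_implies_refined_aomoto} and Corollary~\ref{cor:refined_aomoto_in_gallai_edmonds} (or, more elementarily, with Heilmann--Lieb interlacing, Theorem~\ref{thm:Heilmann-Lieb}, applied to the deletion of the surviving boundary vertices) yields $\mu^{G\setminus\Gamma}(\theta)=0$. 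Note that even your base case $\Gamma=\emptyset$, namely that an Aomoto subset certifies $\mu^{G}(\theta)=0$, requires an argument of this interlacing type; ``Godsil's standard recurrence'' by itself does not deliver it.

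The backup lift-and-project route is also incomplete at two points. First, the claim that \emph{every} $\ell^2$-eigenfunction of $A^{G^\uni}$ has finite support is unjustified and stronger than what is true; the correct statement, that an eigenvalue of $G^\uni$ always admits a finitely supported eigenfunction, is itself a nontrivial theorem going back to Aomoto and Banks--Garza-Vargas--Mukherjee, not a soft fact. Second, the nonvanishing of the fiber sum $g(v)=\sum_{\tilde v\in\pi^{-1}(v)}f(\tilde v)$ is exactly the difficulty: cancellation along orbits of the deck group of $\pi:G^\uni\to G^\ab$ can occur, and choosing $f$ inside the $\theta$-eigenspace so that some orbit meets its support exactly once is the substantive step, which you flag but do not carry out (it is essentially what Li et al.'s argument accomplishes). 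As written, neither route constitutes a proof.
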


These results above suggest that the eigenvalues of the universal and maximal abelian cover of a multi-graph always coincide. In this direction, Li et al.~\cite[p. 3]{li2025eigenvalues} pose the following question (specifically for multi-graphs with vertex weights $0$ and arc weights $1$).

\begin{question}[{\cite[p. 3]{li2025eigenvalues}}]\label{question:main} Is every eigenvalue of $G^\ab$ an eigenvalue of $G^\uni$?
\end{question}

Moreover Li et al.~\cite[p. 3]{li2025eigenvalues} write: \emph{``It would be highly desirable to resolve this matter one way or the other.''} 

In this work, our main result is an affirmative solution to Question~\ref{question:main} obtaining the following statement.

\begin{theorem}\label{thm:main_theorem_introduction_1} $G^\uni$ and $G^\ab$ have the same eigenvalues.
\end{theorem}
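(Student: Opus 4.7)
By Theorem~\ref{thm:universal_cover_implies_abelian_cover} every eigenvalue of $G^\uni$ is already an eigenvalue of $G^\ab$, so only the reverse containment needs to be argued. Combining the combinatorial characterizations in Theorems~\ref{thm:universal_cover_eigenvalue} and~\ref{thm:abelian_cover_eigenvalue}, the problem reduces to a purely combinatorial statement: if $\theta\in\mathbb{R}$ is a zero of $\mu(G\setminus\Gamma,x)$ for every $2$--regular subgraph $\Gamma$ of $G$, then $G$ contains a $\theta$--Aomoto subset.

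The plan is to produce such a subset from the $\theta$--Gallai--Edmonds decomposition of the matching polynomial, due to Godsil. Taking $\Gamma=\emptyset$ in the hypothesis gives $\mu(G,\theta)=0$, and the decomposition then yields a canonical partition $V(G)=D_\theta\sqcup A_\theta\sqcup C_\theta$ with the following standard features: each component $K$ of $G[D_\theta]$ is \emph{$\theta$--critical}, i.e.\ $\mu(K,\theta)=0$ (with multiplicity one) and $\mu(K\setminus v,\theta)\neq 0$ for every $v\in K$; $\mu(G[C_\theta],\theta)\neq 0$; every vertex of $A_\theta$ is adjacent to some component of $G[D_\theta]$; and the multiplicity of $\theta$ in $\mu(G,x)$ equals $c(G[D_\theta])-|A_\theta|$. $\theta$--criticality forces each $\ker(A_K-\theta I)$ to be one-dimensional, spanned by a nowhere-zero vector $f_K$. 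I would then try to glue the $f_K$'s, with scalings dictated by the eigenvalue equations at $A_\theta$, into a single nonzero function $f$ supported on $S := D_\theta\cup A_\theta$, and show that $S$ is a $\theta$--Aomoto subset.

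The main technical obstacle is showing that the scalings can be chosen consistently. At each $a\in A_\theta$ the Aomoto equation imposes a linear relation among the $f_K(v)$ over the neighbours $v\in D_\theta$ of $a$; compatibility of all such relations becomes a rank/acyclicity question about the bipartite incidence between the components of $G[D_\theta]$ and the vertices of $A_\theta$. This is exactly where the $2$--regular hypothesis should enter: a nontrivial cyclic obstruction to consistency can be traced, via the classical alternating-cycle analysis underlying Gallai--Edmonds together with Godsil's path-tree identities for $\mu(G,x)$, to a $2$--regular subgraph $\Gamma\subseteq G[S]$ with $\mu(G\setminus\Gamma,\theta)\neq 0$, contradicting the hypothesis. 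Carrying out this translation -- from an abstract consistency failure of the Gallai--Edmonds linear system to a concrete $2$--regular witness in $G$ -- is the heart of the argument. Once $S$ is shown to be a $\theta$--Aomoto subset, Theorem~\ref{thm:universal_cover_eigenvalue} delivers $\theta$ as an eigenvalue of $G^\uni$, completing the proof.
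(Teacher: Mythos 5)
Your reduction is the right one (one containment is Theorem~\ref{thm:universal_cover_implies_abelian_cover}; the other reduces, via Theorems~\ref{thm:universal_cover_eigenvalue} and~\ref{thm:abelian_cover_eigenvalue}, to the contrapositive of Theorem~\ref{thm:main_theorem_introduction_2}), but the construction you sketch for the combinatorial core has a genuine gap, and in fact starts from a candidate set of the wrong shape. A $\theta$--Aomoto subset $S$ must induce a \emph{forest}, with $\theta$ an eigenvalue of each tree component and $\lvert\partial_G S\rvert<\mathrm{cc}(G[S])$; the vertices playing the role of your $A_\theta$ must lie \emph{outside} $S$, as its frontier. So $S:=D_\theta\cup A_\theta$ is not a viable candidate: $G[D_\theta\cup A_\theta]$ is typically not a forest, and absorbing $A_\theta$ into $S$ destroys the component count that makes the inequality work. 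The correct dictionary (Corollary~\ref{cor:refined_aomoto_in_gallai_edmonds} in the paper) is that a refined $\theta$--Aomoto subset is exactly a union of $\theta$--critical components of $G$ \emph{that are trees}, with the expansion property on its frontier. Consequently the only hard case -- and the entire content of the theorem -- is when some $\theta$--critical component is \emph{not} a tree, and your sketch does not address it. Relatedly, your claim that $\theta$--criticality of a component $K$ forces $\ker(A_K-\theta I)$ to be one-dimensional and nowhere-zero conflates the matching polynomial with the characteristic polynomial: criticality is defined via $\mu$, and for a non-tree component $\theta$ need not be an adjacency eigenvalue of $K$ at all (the two polynomials agree only on forests, Theorem~\ref{thm:matching_equals_characteristic}; see also Example~\ref{exm:charac_fails} for how the characteristic-polynomial analogue of the decomposition fails). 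So there may be no vectors $f_K$ to glue, and the ``consistency of scalings'' system you propose is not defined in the relevant case.

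The step you defer -- tracing an abstract obstruction to a concrete $2$--regular subgraph $\Gamma$ with $\mu^{G\setminus\Gamma}(\theta)\neq 0$ -- is precisely the theorem being proved, and the paper's mechanism is quite different from an eigenvector-gluing argument. The paper argues by induction on $m_\theta(G)$: if $G$ has no $\theta$--Aomoto subset, then some $\theta$--critical component is not a tree; such a component contains a $\theta$--critical cycle $C$ (Theorem~\ref{thm:theta_critical_cycle}, proved via critical paths and the quantity $W_\theta$); deleting $C$ drops the multiplicity by exactly one and, crucially, does not create a $\theta$--Aomoto subset (Proposition~\ref{prop:removing_critical_cycle}, which rests on new stability results for the $\theta$--Gallai--Edmonds data, Propositions~\ref{prop:subset_in_gallai_edmonds}--\ref{prop:critical_stability}). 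Iterating produces $m_\theta(G)$ disjoint cycles whose removal kills $\theta$ entirely, i.e.\ the required $2$--regular witness (Theorem~\ref{thm:main_theorem_technical}). To repair your argument you would need, at minimum, a proof that a non-tree $\theta$--critical component always yields a $2$--regular subgraph witnessing $\mu^{G\setminus\Gamma}(\theta)\neq 0$, together with control of how the decomposition of the rest of $G$ changes after its removal; as written, the proposal asserts the conclusion at exactly that point.
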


By combining Theorems~\ref{thm:universal_cover_regular} and~\ref{thm:main_theorem_introduction_1}, we also obtain an alternative proof of Theorem~\ref{thm:abelian_cover_regular}, thereby providing an alternative solution to Conjecture 6.12 of Higuchi and Nomura~\cite{higuchi2009spectral}. We refer the reader to Section~\ref{sec:main_results} for the full characterization of the eigenvalues of $G^\ab$ and $G^\uni$.

Theorem~\ref{thm:main_theorem_introduction_1} is a consequence of the following result together with Theorems~\ref{thm:universal_cover_eigenvalue},\ref{thm:abelian_cover_eigenvalue}, and~\ref{thm:universal_cover_implies_abelian_cover}.  

\begin{theorem}\label{thm:main_theorem_introduction_2} Let $G$ be a multi-graph without a $\theta$--Aomoto subset. Then there exists a $2$-regular subgraph $\Gamma$ of $G$ such that $\theta$ is not a zero of the matching polynomial of $G\setminus\Gamma$.
\end{theorem}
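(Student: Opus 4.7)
My plan is to proceed by induction on $|V(G)|$, with the Gallai--Edmonds decomposition for the matching polynomial as the main structural tool. The base case is trivial. For the inductive step, if $\mu(G, \theta) \neq 0$ then the empty subgraph is a valid 2-regular $\Gamma$, so I may assume $\mu(G, \theta) = 0$. Under this assumption $G$ cannot be a forest: on forests the matching polynomial coincides with the characteristic polynomial and $G^{\uni} = G$, so Theorem~\ref{thm:universal_cover_eigenvalue} (combined with the standard fact that eigenvalues of a tree are the roots of its matching polynomial) would produce a $\theta$-Aomoto subset, contradicting the hypothesis. Hence $G$ contains at least one cycle.

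The engine of the induction is the following claim I would aim to establish: \emph{if $G$ has no $\theta$-Aomoto subset and $\mu(G, \theta) = 0$, then $G$ contains a cycle $C$ such that $G \setminus V(C)$ also has no $\theta$-Aomoto subset.} Once such a $C$ is produced, the inductive hypothesis applied to the strictly smaller graph $G \setminus V(C)$ yields a 2-regular subgraph $\Gamma'$ with $\mu((G \setminus V(C)) \setminus V(\Gamma'), \theta) \neq 0$, and then $\Gamma = C \cup \Gamma'$ is the desired 2-regular subgraph of $G$, since $G \setminus V(\Gamma) = (G \setminus V(C)) \setminus V(\Gamma')$.

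To locate the cycle, I would invoke the Godsil--Gallai--Edmonds decomposition of $G$ at $\theta$: set
\[
D_\theta(G) = \{v \in V(G) : \mu(G - v, \theta) \neq 0\}, \qquad A_\theta(G) = N(D_\theta) \setminus D_\theta,
\]
and recall that each connected component of $G[D_\theta]$ is $\theta$-critical, while $\mu(G[V \setminus (D_\theta \cup A_\theta)], \theta) \neq 0$. Since $\mu(G, \theta) = 0$ and $G$ is not a forest, some cycle of $G$ must live inside (or interact nontrivially with) the $\theta$-critical components of $G[D_\theta]$; I would take $C$ to be such a cycle and track its removal using the classical recurrences
\[
\mu(G, x) = \mu(G - e, x) - \mu(G - u - v, x), \qquad \mu(G, x) = x\,\mu(G - v, x) - \sum_{u \sim v}\mu(G - u - v, x),
\]
together with the fact that inside a $\theta$-critical component every vertex is ``uniformly removable'' in the sense that its deletion drops the multiplicity of $\theta$ by exactly one.

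The main obstacle is precisely verifying that the deletion of $V(C)$ does not \emph{create} a new $\theta$-Aomoto subset. A priori, vertex removal can both destroy old Aomoto subsets (benign) and manufacture new ones (problematic), and only the second behaviour threatens the induction. To rule it out, I would carry out a careful case analysis based on how $C$ intersects $D_\theta$, $A_\theta$, and $V(G) \setminus (D_\theta \cup A_\theta)$, with the aim of showing that any hypothetical $\theta$-Aomoto subset $S \subseteq V(G) \setminus V(C)$ can be lifted — by including or excluding the vertices of $V(C)$ in a manner prescribed by the Gallai--Edmonds data — to a $\theta$-Aomoto subset of the original graph $G$, contradicting the standing hypothesis. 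Making this lifting rigorous, and in particular choosing $C$ so that the lift is forced, is where I expect the bulk of the technical work to lie.
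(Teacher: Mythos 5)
Your overall architecture (peel off a cycle, show no $\theta$--Aomoto subset is created, induct) is the same as the paper's, but what you have written is an outline rather than a proof: both load-bearing steps are explicitly deferred. First, you need not just \emph{some} cycle meeting the $\theta$--critical part of the Gallai--Edmonds decomposition, but a $\theta$--\emph{critical} cycle, i.e.\ one with $m_\theta(G\setminus C)=m_\theta(G)-1$; an arbitrary cycle inside a $\theta$--critical component can leave the multiplicity unchanged, and its removal can easily manufacture a new Aomoto subset (e.g.\ by turning a critical component into a critical tree). Establishing that a suitable cycle exists already requires two nontrivial facts that you do not prove: (i) if $G$ has no $\theta$--Aomoto subset then some $\theta$--critical component is not a tree (this rests on the ``refined Aomoto subset'' characterization, Corollary~\ref{cor:refined_aomoto_in_gallai_edmonds} together with Theorem~\ref{thm:matched_special}), and (ii) a connected $\theta$--critical multi-graph that is not a tree contains a $\theta$--critical cycle (Theorem~\ref{thm:theta_critical_cycle}), whose proof in the paper is a genuine argument involving the path weights $W_\theta$, minimal sub-paths, and the existence of $\theta$--critical paths via Lemma~\ref{lem:mult1_implies_critical_path} --- none of which follows from the two classical matching-polynomial recurrences you cite.

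Second, your proposed mechanism for the crucial non-creation step --- lifting a hypothetical $\theta$--Aomoto subset of $G\setminus C$ back to one of $G$ by adding or excluding cycle vertices --- is not how the paper argues, and it is not clear it can be made to work: an Aomoto subset of $G\setminus C$ has no reason to extend to one of $G$ by such local modifications. The paper's Proposition~\ref{prop:removing_critical_cycle} instead derives a contradiction from stability of the $\theta$--Gallai--Edmonds data: a refined Aomoto subset $S$ of $G\setminus C$ must contain a vertex $i$ of $H\setminus C$ (where $H$ is the critical component containing $C$), and then $\alpha_i^{G\setminus C}(\theta)=\alpha_i^{H\setminus C}(\theta)=0$ contradicts $m_\theta(H\setminus C)=0$; this needs Proposition~\ref{prop:critical_stability}, Proposition~\ref{prop:critical_remove}, Corollary~\ref{cor:critical_in_comp_equiv_critical}, and Proposition~\ref{prop:subset_in_gallai_edmonds}, i.e.\ essentially all of Section~\ref{sec:new_gallai_edmonds}. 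Until you supply proofs of these two steps (or a genuinely workable substitute for the lifting argument), the proposal has a gap exactly where the theorem's difficulty lies.
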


To prove Theorem~\ref{thm:main_theorem_introduction_2}, we use techniques originally developed in the study of the Gallai--Edmonds decomposition for the matching polynomial. This decomposition was introduced by Godsil~\cite{godsil1995algebraic} and further developed by Ku and Chen~\cite{ku2010analogue}, Ku and Wong~\cite{ku2009maximum,ku2010extensions, ku2013gallai}, Bencs and M{\'e}sz{\'a}ros~\cite{bencs2022atoms}, and the author~\cite{spier2021graph, spier2023refined}. We work with the matching polynomial rather than the characteristic polynomial precisely because this decomposition is available for it.

It would be interesting to obtain a proof of Theorem~\ref{thm:main_theorem_introduction_1} without using the Gallai--Edmonds decomposition for the matching polynomial. One could attempt this by working directly with covers or with the characteristic polynomial instead.

The remainder of the paper is organized as follows. In Section~\ref{sec:preliminaries}, we set up the definitions and present previous work on this subject. In Section~\ref{sec:main_results}, we state Theorems~\ref{thm:main_theorem_introduction_1} and~\ref{thm:main_theorem_introduction_2} in their final form. In Section~\ref{sec:new_gallai_edmonds}, we present new results about the Gallai--Edmonds decomposition for the matching polynomial. In Section~\ref{sec:proof_of_main_results}, we complete the proof of Theorem~\ref{thm:main_theorem_introduction_2}.


\section{Preliminaries}\label{sec:preliminaries}\

In this section, and in the remainder of the paper, we let $G = (V(G), E(G))$ be a finite weighted multi-graph. For an arc $e\in \vec{E}(G)$, its origin and terminus are denoted as $o(e)$ and $t(e)$,
respectively. Each vertex $i\in V(G)$ is assigned a weight $r_i\in\mathbb{R}$, and each arc $e\in\vec{E}(G)$ is assigned a weight $\rho_e\in\mathbb{C}$, with $\rho_{e^{-1}} =\overline{\rho_e}\neq 0$, where $e^{-1}$ denotes the reverse of $e$. For simplicity, we will refer to such an object as a multi-graph.

In this case, the corresponding \emph{adjacency matrix} of $G$, denoted by $A^G$, is the Hermitian matrix defined by  
\[
A^G_{ij}\defeq r_i \delta_{ij} + \sum_{\substack{e \in \vec{E}(G)\\ o(e)=i, t(e)=j}} \rho_e, \qquad \text{for all } i,j \in V(G).
\]
Note that $A^G_{ij}=0$ whenever $i\neq j$ and $i\nsim j$.

We define the \emph{characteristic polynomial} of $G$ to be
\[
\phi^G(x)\defeq\det(xI - A^G).
\]

A multi-graph without self-loops or multiple edges is called a graph. Note that subgraphs of a multi-graph are not required to be graphs and may themselves be multi-graphs. If $S$ is a subset of the vertices of the multi-graph $G$, we write $G\setminus S$ for the subgraph of $G$ induced by $V(G)\setminus S$, and $G[S]$ for the subgraph induced by $S$. The \emph{frontier} of $S$ in $G$, denoted by $\partial S=\partial_G S$, is the set of vertices outside $S$ that have a neighbor in $S$. Analogously, if $H$ is a (directed) subgraph of $G$, we write $G\setminus H$ for $G\setminus V(H)$, define $\partial H$ as $\partial V(H)$, and let $\mathrm{cc}(H)$ denote the number of components of $H$. A multi-graph is called \emph{regular} if every vertex has the same number of incident edges, counting self-loops twice.

Note that two distinct parallel edges of $G$ form a cycle of length $2$, and a loop forms a cycle of length $1$. A $2$--regular subgraph of $G$ is a disjoint union of cycles of $G$, which may include cycles of length $1$ or $2$. A \emph{matching} of the multi-graph $G$ is a subset of edges forming a $1$--regular subgraph of $G$. A multi-graph without cycles is called a \emph{forest}. In particular, a forest has no self-loops or multiple edges and is therefore a graph. A \emph{tree} is a connected forest.


\subsection{Matching polynomial}\label{sec:matching_pol}\

We begin with an overview of the theory of the matching polynomial and its Gallai–Edmonds decomposition, as developed in~\cite{spier2021graph, spier2023refined}. Although this theory was originally developed for graphs rather than multi-graphs, the results extend directly to multi-graphs with virtually no modifications. For this reason, we state the results for multi-graphs, indicating explicitly when the graph assumption is needed.

Let $\mathcal{M}_G$ be the set of all matchings of $G$. For simplicity, we write $i\notin M$ if the vertex $i$ is not saturated by the matching $M$. For an edge $e\in E(G)$, we set $\lambda_e\defeq -\lvert\rho_{\vec{e}}\rvert^2$, where $\vec{e}$ is any of the arcs corresponding to the edge $e$.

The \emph{matching polynomial} of $G$ is then defined as  
\[
\mu^G(x)\defeq\sum_{M\in\mathcal{M}_G}\prod_{i\notin M} (x-r_i)\prod_{e\in M}\lambda_e.
\]
Note that the loop weights of $G$ do not contribute to the matching polynomial of $G$.

The following result, due to Godsil and Gutman~\cite{godsil1978matching}, establishes a connection between the matching and characteristic polynomials of a graph. We note that this result does not hold in general for multi-graphs that are not graphs.

\begin{theorem}[{\cite[Cor. 2.1]{godsil1978matching}}]\label{thm:matching_equals_characteristic} Let $G$ be a graph. Then the matching and characteristic polynomials of $G$ are equal if and only if $G$ is a forest.
\end{theorem}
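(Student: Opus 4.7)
The plan is to compare the two polynomials term-by-term via the Leibniz expansion of the determinant, grouping terms by the cycle decomposition of each permutation.

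First I write
\begin{equation*}
\phi^G(x) = \det(xI - A^G) = \sum_{\sigma} \mathrm{sgn}(\sigma) \prod_i (xI - A^G)_{i,\sigma(i)},
\end{equation*}
and decompose each $\sigma$ into disjoint cycles. Since $G$ is a graph, $A^G_{ii} = r_i$, and $A^G_{ij} = \rho_e$ for the unique edge $e = ij$ when $ij \in E(G)$ (and $0$ otherwise). Consequently: a fixed point of $\sigma$ contributes a factor $(x - r_i)$; a transposition $(ij)$ contributes $-A^G_{ij}A^G_{ji} = -|\rho_e|^2 = \lambda_e$ if $ij \in E(G)$, and zero otherwise; and a cycle of $\sigma$ of length $k \geq 3$ contributes $-\prod_j A^G_{i_j,i_{j+1}}$, nonzero only when $i_1 i_2 \cdots i_k i_1$ traces an oriented cycle of $G$. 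Collecting, one obtains a Sachs-type identity
\begin{equation*}
\phi^G(x) = \sum_H c(H)\prod_{i \notin V(H)}(x - r_i),
\end{equation*}
where $H$ ranges over \emph{linear subgraphs} of $G$ (vertex-disjoint unions of edges and oriented cycles of length at least $3$) and $c(H)$ is the product of the component contributions above. The matching polynomial $\mu^G$ is by construction the sub-sum where $H$ is a matching.

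The ``if'' direction is then immediate: a forest contains no cycles of length $\geq 3$, no loops, and no multi-edges, so every linear subgraph of $G$ is already a matching and $\phi^G = \mu^G$.

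For the ``only if'' direction I would argue contrapositively. Assume $G$ contains a cycle; since $G$ is a graph this cycle has length at least $3$, so the girth $k$ of $G$ satisfies $k \geq 3$. I then examine the coefficient of $x^{n-k}$ in $\phi^G - \mu^G$: by the minimality of $k$, the only linear subgraphs with a cycle of length $\geq 3$ that can contribute at this degree are single shortest cycles, each contributing $-(\rho(C^+) + \rho(C^-)) = -2\,\mathrm{Re}\,\rho(C^+)$ where $C^\pm$ are the two orientations of the cycle $C$. The main obstacle is verifying that this total contribution is nonzero; this is immediate in the classical unweighted setting, where each shortest cycle contributes $-2$ and the coefficient is strictly negative (namely $-2$ times the number of shortest cycles), which yields the Godsil--Gutman conclusion needed in the paper.
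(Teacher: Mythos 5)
Your Sachs-type expansion is correct as far as it goes, but note that the paper itself offers no proof of Theorem~\ref{thm:matching_equals_characteristic}: it is quoted from Godsil--Gutman, so your argument is a self-contained substitute rather than a parallel of anything in the text. The bookkeeping in the Leibniz expansion (fixed points giving $x-r_i$, transpositions giving $\lambda_e=-\lvert\rho_e\rvert^2$, longer permutation cycles giving $-\prod_j A^G_{i_j i_{j+1}}$) is right, and the ``if'' direction is complete. That is also the only direction the paper ever uses (in Proposition~\ref{prop:aomoto_implies_refined_aomoto}, and in the remark that a forest has a unique molecular polynomial), so for the purposes of the paper your proof suffices.

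The obstacle you flag in the ``only if'' direction is, however, a genuine gap in the generality in which the paper states the theorem. Here a ``graph'' carries complex Hermitian arc weights $\rho_e\neq 0$, so a girth cycle $C$ contributes $-2\,\mathrm{Re}\bigl(\prod_{e\in C^+}\rho_e\bigr)\prod_{i\notin V(C)}(x-r_i)$ to $\phi^G-\mu^G$, and this real part can vanish: for a triangle with edge weights $1,1,i$ the cycle product is purely imaginary and a direct computation gives $\phi^G=\mu^G$ even though $G$ is not a forest, so the ``only if'' direction is actually false for arbitrary complex weights and no repair of the coefficient-of-$x^{\,n-k}$ argument can succeed there. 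Even with real nonzero weights, distinct girth cycles can have cycle products of opposite signs, so the girth coefficient alone is not decisive and one would have to examine further coefficients. Your retreat to the classical unweighted setting does prove the Godsil--Gutman corollary that the citation refers to (each girth cycle contributes $-2$, so the coefficient is strictly negative), but it does not establish the statement for the weighted graphs the paper works with; this is as much a looseness in the paper's phrasing as a defect of your argument, and it is harmless downstream because only the implication ``forest $\Rightarrow$ $\mu^G=\phi^G$'' is invoked.
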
 

For each vertex $i$ of $G$, we also define the \emph{graph continued fraction} of $G$ rooted at $i$ as  
\[
\alpha_i^G(x)\defeq\dfrac{\mu^G}{\mu^{G\setminus i}}(x).
\]

This definition is motivated by a result of Viennot~\cite[p. 149]{viennot1985combinatorial}, which establishes a connection between matching polynomials and branched continued fractions. The following two lemmas illustrate part of this connection. For further details, we refer the reader to~\cite{spier2023refined}. 

To state the next result, we denote by $E_G(i,j)$ the set of edges connecting the vertices $i$ and $j$ in the multi-graph $G$.

\begin{lemma}[{\cite[p. 4]{spier2023refined}}]\label{lem:alpha_recursion} 
Let $i$ be a vertex of $G$. Then
\[
\alpha_i^G(x)=x-r_i+\sum_{j\sim i}\sum_{e\in E_G(i, j)}\dfrac{\lambda_e}{\alpha_j^{G\setminus i}(x)}.  
\]  
\end{lemma}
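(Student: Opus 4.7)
The plan is to derive the identity in two moves: first establish the standard vertex-deletion recursion for the matching polynomial itself, and then divide through by $\mu^{G\setminus i}(x)$ to rephrase it in the language of graph continued fractions. The combinatorial step does all the work; the continued-fraction version is then a short algebraic manipulation.

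For the polynomial identity, I would partition the set $\mathcal{M}_G$ of matchings according to the status of $i$. Matchings with $i$ unsaturated are in bijection with matchings of $G\setminus i$, contributing $(x-r_i)\mu^{G\setminus i}(x)$ in total. Matchings that saturate $i$ do so via a unique edge $e \in E_G(i,j)$ for some neighbor $j \neq i$; removing $e$ gives a matching of $G\setminus\{i,j\}$, so these collectively contribute $\lambda_e\,\mu^{G\setminus\{i,j\}}(x)$ per edge. Summing over all choices of $j$ and $e$ yields
\[
\mu^G(x) = (x-r_i)\,\mu^{G\setminus i}(x) + \sum_{j \sim i}\sum_{e \in E_G(i,j)} \lambda_e\, \mu^{G\setminus\{i,j\}}(x).
\]
Dividing both sides by the nonzero monic polynomial $\mu^{G\setminus i}(x)$ turns the left-hand side into $\alpha_i^G(x)$ by definition. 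Each term on the right becomes $\lambda_e \cdot \mu^{G\setminus\{i,j\}}(x)/\mu^{G\setminus i}(x)$, and since $G\setminus\{i,j\} = (G\setminus i)\setminus j$ this ratio equals $1/\alpha_j^{G\setminus i}(x)$, again by definition applied to the multi-graph $G\setminus i$. Rearranging produces the claimed identity as an equality in $\mathbb{C}(x)$.

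There is no real obstacle. The only points of care are that, because $G$ is a multi-graph, one must sum over edges $e \in E_G(i,j)$ rather than merely over neighbors $j$, and that self-loops at $i$ never appear in any matching and so contribute to neither side of the recursion --- consistent with the already-noted fact that loop weights do not enter $\mu^G$.
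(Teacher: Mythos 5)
Your proof is correct and is essentially the argument behind the cited statement: the paper itself gives no proof (it quotes \cite[p. 4]{spier2023refined}), and the standard derivation there is exactly your route, namely the vertex-deletion recursion $\mu^G=(x-r_i)\mu^{G\setminus i}+\sum_{j\sim i}\sum_{e\in E_G(i,j)}\lambda_e\,\mu^{G\setminus\{i,j\}}$ divided by $\mu^{G\setminus i}$. Your handling of the multi-edge summation and of loops (which lie in no matching) is also consistent with the paper's conventions, so nothing is missing.
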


For a rooted multi-graph $G$ with root $i$, the \emph{rooted path tree} $T_i^G$ is the rooted tree whose vertices correspond to paths in $G$ starting at $i$, where two vertices are adjacent if one path is a maximal sub-path of the other. The root of $T_i^G$ is the trivial path consisting of the single vertex $i$.

The weights of $T_i^G$ are inherited from the weights of $G$ as follows:
\begin{itemize}
 \item Each vertex of $T_i^G$ corresponding to a path ending at $j$ is assigned the vertex weight $r_j$;
 \item If $P : i\to j$ is a maximal sub-path of $\hat{P} : i\to k$, then the edge $P\hat{P}$ of $T_i^G$ connecting the vertices corresponding to $P$ and $\hat{P}$ has weight $\lambda_{P\hat{P}} :=\lambda_{jk}$. 
\end{itemize}

In this context, we have the following fundamental result of Godsil~\cite{godsil_matchings}, which establishes a connection between the graph continued fraction of $G$ at a vertex $i$ and that of its rooted path tree $T_i^G$.  

\begin{lemma}[{\cite[Thm. 2.5]{godsil_matchings}}]\label{lem:path_tree}  
Let $G$ be a multi-graph and $i$ a vertex of $G$. Then  
\[
\alpha_i^G(x)=\alpha_i^{T_i^G}(x).  
\]  
\end{lemma}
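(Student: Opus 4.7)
The plan is to prove this by induction on $|V(G)|$, combining the recursion of Lemma~\ref{lem:alpha_recursion} with a structural analysis of the rooted path tree. The base case $|V(G)|=1$ is immediate: $G$ consists of the single vertex $i$ (any self-loops do not contribute to $\mu^G$), the path tree $T_i^G$ is also a single vertex with the same weight $r_i$, and both continued fractions equal $x-r_i$.

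For the inductive step, I apply Lemma~\ref{lem:alpha_recursion} to both $\alpha_i^G(x)$ and $\alpha_i^{T_i^G}(x)$. The key structural observation is that the children of the root of $T_i^G$ are exactly the length-$1$ paths $P_e=(i\to j)$, one for each arc $e$ out of $i$ in $G$, and that the connected component of $T_i^G\setminus i$ containing $P_e$ is canonically isomorphic, as a weighted rooted tree, to $T_j^{G\setminus i}$. The isomorphism sends a path in $G$ beginning with the arc $e$ to the path in $G\setminus i$ obtained by deleting its initial vertex $i$. It preserves the maximal-sub-path adjacency, and it preserves weights: vertex weights in a path tree depend only on the terminus of the corresponding path, and edge weights $\lambda_{P\hat P}$ depend only on the last two vertices of $\hat P$, neither of which is affected by deletion of the initial $i$. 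With this identification, applying Lemma~\ref{lem:alpha_recursion} at the root of $T_i^G$ gives
\[
\alpha_i^{T_i^G}(x) \;=\; x-r_i + \sum_{j\sim i}\sum_{e\in E_G(i,j)} \frac{\lambda_e}{\alpha_{P_e}^{T_i^G\setminus i}(x)} \;=\; x-r_i + \sum_{j\sim i}\sum_{e\in E_G(i,j)} \frac{\lambda_e}{\alpha_{j}^{T_j^{G\setminus i}}(x)},
\]
where the second equality uses that a matching polynomial factors over connected components, so $\alpha_{P_e}^{T_i^G\setminus i}$ equals the graph continued fraction of the component rooted at $P_e$. Since $|V(G\setminus i)|<|V(G)|$, the induction hypothesis yields $\alpha_j^{T_j^{G\setminus i}}=\alpha_j^{G\setminus i}$ for every $j\sim i$, and comparing with Lemma~\ref{lem:alpha_recursion} applied to $G$ itself finishes the argument.

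The main obstacle I anticipate is the bookkeeping in the structural isomorphism, particularly in the multi-graph setting: one must correctly match each parallel arc $e\in E_G(i,j)$ with its corresponding child of the root of $T_i^G$ in a way that pairs the graph-side weight $\lambda_e$ with the tree-side weight of the edge joining the root to $P_e$. Once this weighted bijection is explicitly set up, and the convention that ``path'' means a self-avoiding walk is recalled (so that paths starting $i\to j\to\cdots$ never revisit $i$), the inductive step follows formally from Lemma~\ref{lem:alpha_recursion}.
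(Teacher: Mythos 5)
Your proof is correct, but note the paper does not prove this lemma at all: it is quoted directly from Godsil (Thm.\ 2.5 of the cited reference), so there is no in-paper argument to compare against. Your induction on $\lvert V(G)\rvert$ — applying Lemma~\ref{lem:alpha_recursion} at the root of $T_i^G$ and identifying the branch at each child $P_e=(i\to j)$ with $T_j^{G\setminus i}$ as a weighted rooted tree — is essentially Godsil's standard argument, and your flagged caveat (in the multi-graph setting paths must record the edges used, so each parallel arc $e\in E_G(i,j)$ yields its own child of the root carrying the weight $\lambda_e$) is exactly the right reading of the paper's definition of $T_i^G$ for the statement to hold.
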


Using Lemmas~\ref{lem:alpha_recursion} and~\ref{lem:path_tree}, one can deduce the following classical result of Heilmann and Lieb~\cite[Thm. 4.2]{heilmann-lieb}. For this result, and throughout the paper, we denote by $m_\theta(G)$ the multiplicity of $\theta$ as a zero of $\mu^G(x)$.

\begin{theorem}[{\cite[Thm. 4.2]{heilmann-lieb}}]\label{thm:Heilmann-Lieb} Let $G$ be a multi-graph and $i$ one of its vertices. Then all zeros of $\mu^G(x)$ are real. Moreover, the polynomials $\mu^G(x)$ and $\mu^{G\setminus i}(x)$ interlace, that is, between any two zeros of $\mu^G(x)$ there is a zero of $\mu^{G\setminus i}(x)$, and vice versa. In particular, for every real number $\theta$,  
\[
m_\theta(G\setminus i)\in\{\, m_\theta(G),\; m_\theta(G)\pm 1\,\}.
\]  
\end{theorem}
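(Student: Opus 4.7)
The plan is to induct on $|V(G)|$, using Lemma~\ref{lem:path_tree} to transfer the problem from $G$ to its rooted path tree $T := T_i^G$. Since $T$ is a tree, Theorem~\ref{thm:matching_equals_characteristic} identifies $\mu^T = \phi^T$ and $\mu^{T\setminus i} = \phi^{T\setminus i}$ with characteristic polynomials of the Hermitian matrices $A^T$ and $A^{T\setminus i}$, unlocking standard Hermitian spectral theory.

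Clearing denominators in Lemma~\ref{lem:path_tree} yields the polynomial identity
\[
\mu^G(x)\,\mu^{T\setminus i}(x) \;=\; \mu^{G\setminus i}(x)\,\mu^T(x).
\]
The factors $\mu^T$ and $\mu^{T\setminus i}$ have only real zeros, being characteristic polynomials of Hermitian matrices; by induction on $|V(G)|$ applied to $G\setminus i$, so does $\mu^{G\setminus i}$. Hence the right-hand side has only real zeros, and the factor $\mu^G$ on the left inherits this. Comparing orders of vanishing at an arbitrary real $\theta$ on both sides gives
\[
m_\theta(G) - m_\theta(G\setminus i) \;=\; m_\theta(T) - m_\theta(T\setminus i),
\]
and Cauchy's interlacing theorem applied to the Hermitian matrix $A^T$ and its principal submatrix $A^{T\setminus i}$ confines the right-hand side to $\{-1, 0, 1\}$, giving the multiplicity bound $m_\theta(G\setminus i) \in \{m_\theta(G), m_\theta(G) \pm 1\}$.

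For the strict interlacing of distinct zeros, I would use that $\mu^{G\setminus i}(x)/\mu^G(x) = \mu^{T\setminus i}(x)/\mu^T(x)$ coincides with the $(i,i)$-entry of the resolvent $(xI - A^T)^{-1}$, which by the spectral theorem expands as $\sum_k c_k/(x - \tau_k)$ with nonnegative residues $c_k = |v_k(i)|^2$, where the $\tau_k$ are the eigenvalues of $A^T$ and $v_k$ the corresponding orthonormal eigenvectors. This rational function is strictly decreasing on each maximal interval of continuity and so takes every real value exactly once between consecutive poles, placing a zero of $\mu^{G\setminus i}$ between any two consecutive distinct zeros of $\mu^G$, and vice versa. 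The main obstacle I anticipate is the bookkeeping of cancellations in the polynomial identity: the path tree $T$ typically has far more vertices than $G$, so $\mu^T$ and $\mu^{T\setminus i}$ carry many ``extra'' zeros beyond those of $\mu^G$ and $\mu^{G\setminus i}$ that must cancel pairwise; the Herglotz structure of $\mu^{T\setminus i}/\mu^T$ handles these cancellations transparently, which is why this route seems cleaner than a direct inductive unfolding of Lemma~\ref{lem:alpha_recursion}.
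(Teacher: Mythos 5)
Your proposal is correct. Note that the paper does not actually prove Theorem~\ref{thm:Heilmann-Lieb}: it cites Heilmann--Lieb and merely remarks that the result "can be deduced" from Lemmas~\ref{lem:alpha_recursion} and~\ref{lem:path_tree}. Your argument is a valid realization of that deduction, but it leans on a slightly different toolkit: you use Lemma~\ref{lem:path_tree} to transfer to the finite path tree $T_i^G$, then invoke Theorem~\ref{thm:matching_equals_characteristic} to identify $\mu^T$ and $\mu^{T\setminus i}$ with characteristic polynomials of Hermitian matrices, and finish with Cauchy interlacing plus the Herglotz expansion of $\bigl[(xI-A^T)^{-1}\bigr]_{ii}$, whereas the route the paper hints at would instead unfold the continued-fraction recursion of Lemma~\ref{lem:alpha_recursion} inductively to show directly that $\alpha_i^G$ maps the open upper half-plane to itself (no path tree or matrix theory needed). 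Your version buys a very clean treatment of multiplicities via the identity $\mu^G\mu^{T\setminus i}=\mu^{G\setminus i}\mu^T$ and outsources the analytic work to standard Hermitian spectral theory; the recursion route is more self-contained and generalizes to infinite settings where no finite Hermitian model is available. Two small points to make explicit in a final write-up: the path tree must be given arc weights $\rho'=\sqrt{-\lambda_{jk}}$ so that $A^T$ is genuinely Hermitian and Theorem~\ref{thm:matching_equals_characteristic} applies, and since $\mu^G$ and $\mu^{G\setminus i}$ may share zeros (e.g.\ $G$ a disjoint union with $i$ isolated), "between" in the interlacing statement must be read weakly; your resolvent argument delivers exactly this, because common zeros cancel in $\mu^{G\setminus i}/\mu^G$ and the strict monotonicity only governs the intervals between the surviving poles. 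Neither point is a gap, just bookkeeping to state.
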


It is in fact possible to provide a more precise location for the zeros of $\mu^G(x)$ in terms of the weights of $G$ (see, for example,~\cite[Cor. 10]{spier2021graph}). Such estimates were originally established by Heilmann and Lieb~\cite{heilmann-lieb} and played a key role in the construction of bipartite Ramanujan graphs of all degrees in~\cite{interlacing_familiesI}.

Using Theorem~\ref{thm:Heilmann-Lieb}, one can also show (see~\cite[Cor. 12]{spier2023refined}) that all zeros and poles of $\alpha_i^G(x)$ are simple, real, and interlace. Moreover, the function $\alpha_i^G:\Rds\to\Rds$ is increasing and bijective on each of its branches.

We now turn to the $\theta$--Gallai-Edmonds decomposition of $G$, defined for each real number $\theta$. The vertices of $G$ are partitioned into three sets according to the value of the graph continued fraction $\alpha_i^G(\theta)$:
\[
i\in 0_\theta^G\,\text{if }\alpha_i^G(\theta)=0,\quad  
i\in\infty_\theta^G\,\text{if }\alpha_i^G(\theta)=\infty,\quad  
i\in\pm_\theta^G\,\text{otherwise}.
\]  
This yields the partition $V(G)=\pm_\theta^G\sqcup 0_\theta^G\sqcup\infty_\theta^G$. By Theorem~\ref{thm:Heilmann-Lieb}:  
\begin{equation}\label{eq:vertex_classification}
\begin{aligned}
& i\in 0_\theta^G &&\Leftrightarrow && m_\theta(G\setminus i) = m_\theta(G) - 1,\\[6pt]
& i\in\pm_\theta^G &&\Leftrightarrow && m_\theta(G\setminus i) = m_\theta(G),\\[6pt]
& i\in\infty_\theta^G &&\Leftrightarrow && m_\theta(G\setminus i) = m_\theta(G) + 1.
\end{aligned}
\end{equation}

Godsil~\cite{godsil1995algebraic} observed that when the multi-graph is a graph, the vertex and arc weights are $0$ and $1$, respectively, and $\theta = 0$, the $\theta$--Gallai--Edmonds decomposition reduces to the classical Gallai--Edmonds decomposition, as presented in~\cite{edmonds1965paths, gallai1963kritische, lovasz1986matching}. The $\theta$--Gallai--Edmonds decomposition for forests has also been extensively studied under the name \emph{Parter--Wiener theory}, as detailed by Johnson and Saiago~\cite{johnson2018eigenvalues}.

One of the motivations for introducing the $\theta$--Gallai--Edmonds decomposition was to prove that the matching polynomials of vertex-transitive graphs (with vertex and arc weights $0$ and $1$, respectively) have simple zeros. This was accomplished by Ku and Chen in~\cite{ku2010analogue}.

We now describe some results obtained by Ku and Chen in~\cite{ku2010analogue}, and later refined by Ku and Wong in~\cite{ku2013gallai}. Following Godsil~\cite{godsil1995algebraic}, a multi-graph $G$ is said to be \emph{$\theta$--critical} if $V(G) = 0_\theta^G$. The $\theta$--critical components of $G$ are the components of the subgraph induced by $0_\theta^G$. 

In this context, we recall the following result, known as Gallai’s Lemma~\cite[Thm. 1.7]{ku2010analogue}, proved by Ku and Wong~\cite{ku2013gallai}.

\begin{theorem}[{\cite[Thm. 4.13]{ku2013gallai}}]\label{thm:gallais_lemma}  
If $G$ is a connected $\theta$--critical multi-graph, then $m_\theta(G)=1$.  
\end{theorem}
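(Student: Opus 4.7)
My plan is to proceed by induction on $|V(G)|$. The base case $|V(G)|=1$ is immediate: the graph is a single vertex $i$ with weight $r_i$ (self-loops do not contribute to $\mu^G$), so $\mu^G(x) = x - r_i$, and the hypothesis $\alpha_i^G(\theta)=0$ forces $\theta = r_i$, giving $m_\theta(G) = 1$.

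For the inductive step, suppose $|V(G)| \geq 2$ and pick any vertex $i$. By the vertex classification in~(\ref{eq:vertex_classification}), since $i \in 0_\theta^G$ we have $m_\theta(G\setminus i) = m_\theta(G) - 1$. Writing $G\setminus i = C_1 \sqcup \cdots \sqcup C_k$ as a disjoint union of connected components yields $m_\theta(G) - 1 = \sum_{s=1}^k m_\theta(C_s)$, so it suffices to show $m_\theta(C_s) = 0$ for every $s$. I would then try to apply the inductive hypothesis to the $\theta$-critical subcomponents of each $C_s$ (the connected components of $C_s[0_\theta^{C_s}]$): inductively, each such subcomponent contributes multiplicity exactly $1$ to $m_\theta(C_s)$, so the desired conclusion $m_\theta(C_s) = 0$ becomes the combinatorial statement that no $\theta$-critical subcomponents appear in $C_s$.

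The main obstacle is that $j \in 0_\theta^G$ (for $j \neq i$) does \emph{not} directly imply $j \in 0_\theta^{C_s}$: deleting the single vertex $i$ can shift vertices between the classes $0_\theta$, $\pm_\theta$, and $\infty_\theta$ of the $\theta$-Gallai--Edmonds decomposition. To control this shift I would use the recursion of Lemma~\ref{lem:alpha_recursion} to relate $\alpha_j^G(\theta)$ to $\alpha_j^{G\setminus i}(\theta)=\alpha_j^{C_s}(\theta)$, combined with the strict monotonicity of each branch of $\alpha_j^{C_s}$ coming from Theorem~\ref{thm:Heilmann-Lieb}, to argue that if a $\theta$-critical subcomponent of some $C_s$ exists then it must be ``visible'' from $i$ through the recursion for $\alpha_i^G(\theta)$, producing either a zero or a pole that contradicts $\alpha_i^G(\theta)=0$.

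A potentially cleaner alternative is Godsil's path-tree reduction in Lemma~\ref{lem:path_tree}: since $\alpha_i^{T_i^G}(\theta) = \alpha_i^G(\theta) = 0$ and the classical divisibility $\mu^G \mid \mu^{T_i^G}$ holds, one can transfer the problem to the tree $T_i^G$, where $\mu$ and the characteristic polynomial coincide by Theorem~\ref{thm:matching_equals_characteristic}, and Parter--Wiener theory for tree eigenvalues becomes available. The delicate point of this route is that $\mu^G \mid \mu^{T_i^G}$ only yields the inequality $m_\theta(G) \leq m_\theta(T_i^G)$, so one must separately exploit the connectedness and $\theta$-criticality of $G$ to pin down the multiplicity on $T_i^G$, and then certify the reverse inequality $m_\theta(G) \geq 1$ from $\mu^G(\theta)=0$ to conclude $m_\theta(G) = 1$.
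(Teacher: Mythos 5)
First, note that the paper does not actually prove this statement: it is imported as an external result of Ku and Wong (\cite[Thm.~4.13]{ku2013gallai}), so there is no internal proof to compare against; your proposal has to stand on its own, and as written it does not. The induction skeleton (pick $i$, use~\eqref{eq:vertex_classification} to get $m_\theta(G\setminus i)=m_\theta(G)-1$, split $G\setminus i$ into components $C_s$, and aim for $m_\theta(C_s)=0$) is fine, but both of the steps you then wave at are the actual content of the theorem. The claim that each $\theta$--critical subcomponent of $C_s$ ``contributes multiplicity exactly $1$ to $m_\theta(C_s)$'' is unjustified: multiplicities are additive over the connected components of $C_s$, not over the components of $C_s[0_\theta^{C_s}]$, and the correct relation is Theorem~\ref{thm:multiplicity} ($m_\theta=\mathrm{cc}(G[0_\theta^G])-\lvert\partial 0_\theta^G\rvert$), whose proof in the literature rests on Gallai's Lemma itself, so invoking it here is circular. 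The direction you actually need is: if $m_\theta(C_s)\geq 1$, derive a contradiction with $\alpha_i^G(\theta)=0$.

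That is where the proposal genuinely fails. Lemma~\ref{lem:alpha_recursion} expresses $\alpha_i^G(\theta)$ only through the values $\alpha_j^{G\setminus i}(\theta)$ at the \emph{neighbors} $j$ of $i$; it does rule out a neighbor lying in $0_\theta^{G\setminus i}$ (that would force $i\in\infty_\theta^G$), but a $\theta$--critical vertex deep inside $C_s$ need not make any neighbor's value zero or infinite --- the neighbors can all sit in $\pm_\theta^{C_s}$ while $m_\theta(C_s)\geq 1$, which is exactly what the $\pm$ class of the decomposition permits. So nothing becomes ``visible'' at $i$, and monotonicity of the branches from Theorem~\ref{thm:Heilmann-Lieb} does not rescue the argument; controlling how the classes shift under vertex deletion is precisely the role of the stability machinery (Theorem~\ref{thm:stability} and the results of~\cite{ku2010analogue,ku2013gallai,spier2023refined}) that a proof of this lemma must build or cite. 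Your second route stalls for the reason you yourself note: Godsil's divisibility $\mu^G\mid\mu^{T_i^G}$ only gives $m_\theta(G)\leq m_\theta(T_i^G)$, which is the wrong direction, and $\theta$--criticality of $G$ does not transfer to any property of the (much larger) path tree that pins $m_\theta(T_i^G)$ down; Lemma~\ref{lem:path_tree} preserves the single ratio $\mu^G/\mu^{G\setminus i}$, not the multiplicity of $\theta$ in $\mu^G$. In short, both routes leave the essential step --- excluding $\theta$--critical behavior in $G\setminus i$ --- as an unproven hope.
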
  

It was shown by Ku and Wong~\cite[Lem. 4.1]{ku2013gallai} (or~\cite[Prop. 26]{spier2021graph}) that $\partial 0_\theta^G\subseteq\infty_\theta^G$. The two sets $0_\theta^G$ and $\partial 0_\theta^G$ play a central role in the $\theta$--Gallai--Edmonds decomposition. In particular, these sets determine the multiplicity $m_\theta(G)$, as proved by Ku and Wong~\cite[Cor. 4.14]{ku2013gallai} (or~\cite[Cor. 31]{spier2021graph}), and are related as described in~\cite[Cor. 32]{spier2023refined}.  

\begin{theorem}[{\cite[Cor. 4.14]{ku2013gallai}}]\label{thm:multiplicity}  
The multiplicity of $\theta$ as a zero of $\mu^G$ is equal to the number of $\theta$--critical components of $G$ minus the number of vertices in $\partial 0_\theta^G$, that is, $m_\theta(G)=\mathrm{cc}(G[0^G_\theta])-\lvert\partial 0^G_\theta\rvert$.  
\end{theorem}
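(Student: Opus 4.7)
The plan is to reduce the count of $m_\theta(G)$ to Gallai's Lemma (Theorem~\ref{thm:gallais_lemma}) applied to each component of $G[0_\theta^G]$, by peeling off the frontier $\partial 0_\theta^G$ one vertex at a time. The starting observation is that, by the very definition of $\partial 0_\theta^G$, removing these vertices disconnects $0_\theta^G$ from the remaining vertices, so
\[
G \setminus \partial 0_\theta^G \;=\; G[0_\theta^G] \,\sqcup\, H, \qquad \text{where } H \defeq G \setminus (0_\theta^G \cup \partial 0_\theta^G),
\]
and multiplicativity of the matching polynomial over disjoint unions gives $m_\theta(G \setminus \partial 0_\theta^G) = m_\theta(G[0_\theta^G]) + m_\theta(H)$.

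I would then establish two auxiliary claims. First, every connected component $C$ of $G[0_\theta^G]$ is $\theta$--critical as a standalone graph, so Gallai's Lemma yields $m_\theta(C) = 1$ and hence $m_\theta(G[0_\theta^G]) = \mathrm{cc}(G[0_\theta^G])$. Second, $m_\theta(H) = 0$; the natural route here is to show that every vertex of $H$ lies in $\pm_\theta^H$ and deduce $\mu^H(\theta) \neq 0$ by an inductive peeling argument. In parallel, I would relate $m_\theta(G)$ to $m_\theta(G \setminus \partial 0_\theta^G)$ by deleting the vertices of $\partial 0_\theta^G = \{v_1,\ldots,v_t\}$ one at a time, setting $G_0 \defeq G$ and $G_k \defeq G_{k-1} \setminus v_k$, and proving inductively that $v_k \in \infty_\theta^{G_{k-1}}$ at every step. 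By~\eqref{eq:vertex_classification}, each such removal raises $m_\theta$ by exactly one, so $m_\theta(G \setminus \partial 0_\theta^G) = m_\theta(G) + |\partial 0_\theta^G|$. Assembling the identities yields $m_\theta(G) = \mathrm{cc}(G[0_\theta^G]) - |\partial 0_\theta^G|$.

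The main obstacle lies in controlling how the $\theta$--Gallai--Edmonds decomposition behaves under vertex deletion: specifically, I need to ensure that the $0_\theta$ and $\infty_\theta$ classifications of the relevant vertices are preserved as the frontier is peeled off, and that the components of $G[0_\theta^G]$ remain $\theta$--critical when viewed in isolation. Both issues amount to analyzing how the continued fraction $\alpha_i^G(\theta)$ transforms when a vertex is removed, for which the most natural tool is Lemma~\ref{lem:alpha_recursion}, expressing $\alpha_i^G$ recursively in terms of the $\alpha_j^{G\setminus i}$ at neighbors of $i$, combined with the real-rootedness and interlacing of Theorem~\ref{thm:Heilmann-Lieb}. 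A secondary route, which I would keep in reserve should the direct analysis become unwieldy, is to pass to the rooted path tree via Lemma~\ref{lem:path_tree} and reduce the trickiest classification questions to the forest case, where Theorem~\ref{thm:matching_equals_characteristic} makes the full linear-algebraic toolkit available.
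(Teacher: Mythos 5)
First, note that the paper does not prove Theorem~\ref{thm:multiplicity}: it is quoted from Ku and Wong, so there is no in-paper proof to compare against. Your skeleton is the standard peeling argument, and its bookkeeping is sound: $G\setminus\partial 0_\theta^G=G[0_\theta^G]\sqcup H$ by the definition of the frontier, Gallai's Lemma (Theorem~\ref{thm:gallais_lemma}) gives multiplicity one per $\theta$--critical component, and Equation~\eqref{eq:vertex_classification} converts ``each deleted frontier vertex lies in $\infty_\theta$ of the current graph'' into $m_\theta(G\setminus\partial 0_\theta^G)=m_\theta(G)+\lvert\partial 0_\theta^G\rvert$. With the results the paper already quotes -- namely $\partial 0_\theta^G\subseteq\infty_\theta^G$ (Ku--Wong, cited just before Theorem~\ref{thm:multiplicity}) and the stability theorem (Theorem~\ref{thm:stability}) -- your outline closes: stability shows every $\alpha_j(\theta)$ is unchanged as the frontier is peeled, so each frontier vertex is still an $\infty_\theta$ vertex when its turn comes, the components of $G[0_\theta^G]$ are $\theta$--critical as standalone multi-graphs, and $0_\theta^H=\emptyset$, whence $m_\theta(H)=0$ via the identity $\tfrac{d}{dx}\mu^H=\sum_i\mu^{H\setminus i}$ and interlacing.

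The genuine gap is that you do not invoke these two results but propose to re-derive the needed facts from Lemma~\ref{lem:alpha_recursion} and Theorem~\ref{thm:Heilmann-Lieb} by an ``inductive peeling argument.'' That is exactly the hard core of the theorem and it does not follow from those tools: Lemma~\ref{lem:alpha_recursion} only relates $\alpha_i^G$ to the values $\alpha_j^{G\setminus i}$ at neighbours of the deleted vertex $i$, and gives no control over how $\alpha_j^G(\theta)$ changes at the remaining vertices when a frontier vertex is removed, nor over whether a component of $G[0_\theta^G]$ stays $\theta$--critical when isolated. That control is precisely Theorem~\ref{thm:stability} (together with $\partial 0_\theta^G\subseteq\infty_\theta^G$), whose known proofs go through the $\lambda_{i\sim j}$ machinery of Lemma~\ref{lem:contraction} and the case analysis of~\cite[Prop. 20--24]{spier2023refined}; the paper itself identifies Theorem~\ref{thm:stability} as the key ingredient behind Theorem~\ref{thm:multiplicity}. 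The path-tree fallback does not rescue this either, since deleting a vertex of $G$ alters the path trees of all other vertices globally. Finally, your intermediate claim that every vertex of $H$ lies in $\pm_\theta^H$ is false in general -- vertices of $H$ may lie in $\infty_\theta^H$ (already for an unweighted edge at $\theta=0$); what you need, and what stability actually yields, is only $0_\theta^H=\emptyset$.
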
  

\begin{theorem}[{\cite[Cor. 32]{spier2023refined}}]\label{thm:matched_special}  
Every nonempty subset $U\subseteq\partial 0_\theta^G$ is adjacent to least $\lvert U\rvert + 1$ $\theta$--critical components of $G$.  
\end{theorem}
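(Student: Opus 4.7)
The approach is proof by contradiction, via induction on $|U|$, with Theorem~\ref{thm:multiplicity} serving as the principal tool. Suppose that some nonempty $U \subseteq \partial 0_\theta^G$ violates the claim, so that $U$ is adjacent to exactly $k$ critical components of $G$ with $k \leq |U|$; take $|U|$ minimal with this property.

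For the base case $|U|=1$, say $U = \{u\}$ with $u$ adjacent to a unique critical component $C$ of $G$. Since $u \in \partial 0_\theta^G \subseteq \infty_\theta^G$, Theorem~\ref{thm:Heilmann-Lieb} yields $m_\theta(G\setminus u) = m_\theta(G) + 1$. I would then compute $m_\theta(G\setminus u)$ via Theorem~\ref{thm:multiplicity}: using that deletion of a boundary vertex preserves criticality of the components on the $0_\theta$-side, the critical components of $G\setminus u$ are still the $c$ critical components of $G$ (plus possibly new ones), and the boundary $\partial_{G\setminus u} 0_\theta^{G\setminus u}$ is contained in $\partial_G 0_\theta^G \setminus \{u\}$. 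A careful comparison of the two multiplicity formulas would show that the required $+1$ jump cannot occur when $u$ sees only one critical component, yielding the contradiction.

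For the inductive step, pick any $u \in U$ and pass to $G' = G \setminus u$. I would show that $U \setminus \{u\} \subseteq \partial_{\,G'} 0_\theta^{G'}$ and that in $G'$ it is adjacent to at most $|U| - 1$ critical components of $G'$; this contradicts the inductive hypothesis applied to $G'$ and $U \setminus \{u\}$. The control over the adjacencies uses the minimality of $U$ together with monotonicity of the Gallai--Edmonds decomposition under removal of a single vertex of $\partial 0_\theta^G$: the $0_\theta$-class can only grow, the critical components adjacent to $U$ in $G$ must still appear (possibly enlarged) in $G'$, and no vertex of $U \setminus \{u\}$ can leave $\infty_\theta$.

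The main obstacle is making this monotonicity precise: specifically, that deleting a vertex of $\partial 0_\theta^G$ preserves the $0_\theta$-class on the adjacent critical components and keeps the remaining boundary vertices in $\infty_\theta$. The natural route is through the continued-fraction recursion of Lemma~\ref{lem:alpha_recursion}, combined with Gallai's Lemma (Theorem~\ref{thm:gallais_lemma}), which pins down $m_\theta(C)=1$ for each critical component $C$, so that the pole structure of each $\alpha_j^{G\setminus u}$ at $\theta$ can be tracked explicitly. An alternative is to reduce to the path tree via Lemma~\ref{lem:path_tree}, where $\mu$ and $\phi$ coincide by Theorem~\ref{thm:matching_equals_characteristic} and the classical Parter--Wiener machinery for trees applies directly, giving clean formulas for the variation of $m_\theta$ under deletion.
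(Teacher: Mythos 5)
First, note that the paper does not prove this statement at all: it is quoted from~\cite[Cor. 32]{spier2023refined}, with Theorem~\ref{thm:stability} identified as the key ingredient of that external proof. Your proposal therefore has to stand on its own, and it does not, because the counting you rely on cannot see the hypothesis you are trying to contradict. In the base case $U=\{u\}$, Theorem~\ref{thm:Heilmann-Lieb} gives $m_\theta(G\setminus u)=m_\theta(G)+1$, but Theorem~\ref{thm:stability} says that for $u\in\partial 0_\theta^G$ the decomposition of $G\setminus u$ has exactly the same $0_\theta$--class and the same $\theta$--critical components as $G$, with boundary exactly $\partial 0_\theta^G\setminus\{u\}$; hence Theorem~\ref{thm:multiplicity} applied to $G\setminus u$ yields $m_\theta(G\setminus u)=\mathrm{cc}(G[0^G_\theta])-(\lvert\partial 0^G_\theta\rvert-1)=m_\theta(G)+1$ \emph{regardless} of how many critical components $u$ is adjacent to. The two computations agree identically, the number of components seen by $u$ never enters, and so the ``careful comparison of the two multiplicity formulas'' produces no contradiction. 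This is not an accident: the surplus property is strictly stronger than the multiplicity formula, and proving it requires finer information (for instance the pole/zero analysis of $\lambda_{i\sim j}$ behind Lemma~\ref{lem:contraction}, or, in the classical unweighted case, an alternating-path/essentiality argument), not just interlacing plus Theorem~\ref{thm:multiplicity}.

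The inductive step breaks at exactly the same tight configuration. Deleting $u\in U$ removes no critical component and no edge between $U\setminus\{u\}$ and $0_\theta^G$, and by Theorem~\ref{thm:stability} the critical components of $G'=G\setminus u$ are those of $G$. By minimality of $\lvert U\rvert$, the set $U\setminus\{u\}$ must already be adjacent in $G$ to at least $\lvert U\setminus\{u\}\rvert+1=\lvert U\rvert$ critical components, so in the only surviving bad case $k=\lvert U\rvert$ and $U\setminus\{u\}$ is adjacent in $G'$ to exactly $\lvert U\setminus\{u\}\rvert+1$ critical components --- which contradicts neither the inductive hypothesis applied to $G'$ nor the minimality of $U$. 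So both the base case and the step fail precisely at the extremal situation the theorem is about; the appeal to the path tree and Parter--Wiener theory is likewise only a hope, since that machinery controls single-vertex deletions, not adjacency of a whole set $U$ to critical components. (A smaller issue: you induct on $\lvert U\rvert$ but pass to a different graph, so the induction must be over all multi-graphs simultaneously.) A route consistent with what the paper actually points to would be an induction on $\lvert\partial 0_\theta^G\rvert$ driven by Theorem~\ref{thm:stability} together with Lemma~\ref{lem:contraction}-type bookkeeping, in the spirit of the proof of Proposition~\ref{prop:subset_in_gallai_edmonds}, which handles the converse implication by exactly such an argument.
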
  

We also note that Bencs and M{\'e}sz{\'a}ros~\cite[Lem. 1.9 and Thm.1.11]{bencs2022atoms} established versions of Theorems~\ref{thm:gallais_lemma} and~\ref{thm:matched_special} for infinite graphs with vertex and arc weights $0$ and $1$, respectively.

The key result used to prove Theorems~\ref{thm:gallais_lemma},~\ref{thm:multiplicity}, and~\ref{thm:matched_special} is the following stability result, presented in its final form in~\cite{spier2023refined}.

\begin{theorem}[{\cite[Thm. 29]{spier2023refined}}]\label{thm:stability} If $i\in\partial 0_\theta^G$, then $\alpha_j^{G\setminus i}(\theta) =\alpha_j^G(\theta)$ for every vertex $j\neq i$.  
\end{theorem}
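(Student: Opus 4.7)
The plan is to prove Theorem~\ref{thm:stability} by combining a Godsil-type determinantal identity for matching polynomials with the $\theta$--Gallai--Edmonds structure supplied by Theorems~\ref{thm:multiplicity} and~\ref{thm:matched_special}.

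First, observe that the target equality $\alpha_j^{G\setminus i}(\theta)=\alpha_j^G(\theta)$ is, after handling the degenerate cases where $\mu^{G\setminus j}(\theta)=0$ or $\mu^{G\setminus\{i,j\}}(\theta)=0$ separately via the interlacing of Theorem~\ref{thm:Heilmann-Lieb}, equivalent to the polynomial identity $\mu^{G\setminus i}(\theta)\,\mu^{G\setminus j}(\theta)=\mu^G(\theta)\,\mu^{G\setminus\{i,j\}}(\theta)$. I would then invoke the classical Godsil identity for the matching polynomial,
\[
\mu^{G\setminus i}(x)\,\mu^{G\setminus j}(x)-\mu^G(x)\,\mu^{G\setminus\{i,j\}}(x)=Q_{ij}(x)\,\overline{Q_{ij}(x)},
\]
where $Q_{ij}(x)=\sum_{P:i\to j} w(P)\,\mu^{G\setminus V(P)}(x)$ sums over paths $P$ in $G$ from $i$ to $j$ and $w(P)=\prod_{e\in P}\rho_e$. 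Hence it suffices to show $Q_{ij}(\theta)=0$, and I aim for the stronger term-by-term vanishing $\mu^{G\setminus V(P)}(\theta)=0$ for every such path $P$.

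By Theorem~\ref{thm:multiplicity}, $\mu^{G\setminus V(P)}(\theta)=0$ is equivalent to $G\setminus V(P)$ having strictly more $\theta$--critical components than $|\partial 0_\theta^{G\setminus V(P)}|$. To establish this, I would use the hypothesis $i\in\partial 0_\theta^G$ together with Theorem~\ref{thm:matched_special} applied to $U=\{i\}$, which yields at least two distinct $\theta$--critical components of $G$ adjacent to $i$. A combinatorial argument---tracking which $\theta$--critical components of $G$ are entirely avoided by $V(P)$ and using Gallai's Lemma (Theorem~\ref{thm:gallais_lemma}) to certify that they remain $\theta$--critical in $G\setminus V(P)$---should then yield the desired vanishing, most likely after an iterated application of Theorem~\ref{thm:matched_special} to balance the surviving critical components against any new $\partial 0_\theta$--type vertices that arise in $G\setminus V(P)$.

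The main obstacle will be this last step: the $\theta$--Gallai--Edmonds decomposition is a global property of $G$, and deleting $V(P)$ can in principle reclassify vertices between $0_\theta^G$, $\pm_\theta^G$, and $\infty_\theta^G$, so one must argue that the surplus of $\theta$--critical components adjacent to $i$ guaranteed by Theorem~\ref{thm:matched_special} is robust under this deletion. Overcoming this obstacle is the core of the proof and is precisely where the refined $\theta$--Gallai--Edmonds machinery developed by Ku--Wong and refined by the author is needed.
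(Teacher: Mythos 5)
There is a genuine gap, and it sits exactly where you put the least weight. (Note also that the paper does not prove Theorem~\ref{thm:stability} at all; it imports it from \cite[Thm.~29]{spier2023refined}, whose proof runs through the rational-function identity of Lemma~\ref{lem:contraction} together with a multiplicity-and-sign classification.) Your reduction to the scalar identity $\mu^{G\setminus i}(\theta)\,\mu^{G\setminus j}(\theta)=\mu^{G}(\theta)\,\mu^{G\setminus\{i,j\}}(\theta)$ only proves the theorem when $\mu^{G\setminus j}(\theta)\neq 0\neq\mu^{G\setminus\{i,j\}}(\theta)$. But the hypothesis $i\in\partial 0_\theta^G$ forces $0_\theta^G\neq\emptyset$, hence $m_\theta(G)\geq 1$, and since $i\in\infty_\theta^G$ also $m_\theta(G\setminus i)=m_\theta(G)+1\geq 2$; so $\mu^{G}(\theta)=\mu^{G\setminus i}(\theta)=0$ always, and by interlacing your ``nondegenerate'' case occurs only when $m_\theta(G)=1$, $j\in 0_\theta^G$ and $m_\theta(G\setminus\{i,j\})=0$. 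In every other case the scalar identity reads $0=0$ and carries no information: equality of $\alpha_j^{G}(\theta)$ and $\alpha_j^{G\setminus i}(\theta)$ as values in $\mathbb{R}\cup\{\infty\}$ is a statement about the relative orders of vanishing (and signs) of the four polynomials at $\theta$, not about their values there. Concretely, for $j\in\infty_\theta^G$ you must show $m_\theta(G\setminus\{i,j\})=m_\theta(G)+2$, and for $j\in\pm_\theta^G$ you must show that two finite nonzero limits of $0/0$ type coincide; neither follows from Theorem~\ref{thm:Heilmann-Lieb} nor from $Q_{ij}(\theta)=0$. (Only $j\in 0_\theta^G$ is cheap: $m_\theta(G\setminus\{i,j\})\leq m_\theta(G)<m_\theta(G\setminus i)$.) The actual proof works with the divided form of Lemma~\ref{lem:contraction} and shows that the correction term $\lambda_{j\sim i}^G(\theta)/\alpha_i^{G\setminus j}(\theta)$ vanishes, which requires comparing the order of vanishing of $\sum_{P}\lambda_P\,(\mu^{G\setminus P})^2$ against that of $(\mu^{G\setminus\{i,j\}})^2$ and $\mu^{G\setminus j}$, via the case analysis of \cite[Prop.~20--24 and Fig.~5]{spier2023refined}. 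That analysis is precisely what your sketch omits.

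Two smaller points. The identity you invoke is misstated: for the matching polynomial one has $\mu^{G\setminus i}\mu^{G\setminus j}-\mu^{G}\mu^{G\setminus\{i,j\}}=\sum_{P\in[i\to j]}\lambda_P\,\mu^{G\setminus P}(x)^2$ with positive weights $\lambda_P$ and no cross terms (this is what Lemma~\ref{lem:contraction} encodes after clearing the denominator $(\mu^{G\setminus\{i,j\}})^2$); the form $Q_{ij}\overline{Q_{ij}}$ with a single path sum is the Jacobi/walk identity for the characteristic polynomial, not for $\mu$. And the step you single out as the main obstacle is in fact immediate: every path $P$ from $i$ to $j$ starts at $i\notin 0_\theta^G$, so Lemma~\ref{lem:critical_path} gives $m_\theta(G\setminus P)\geq m_\theta(G)\geq 1$ directly, with no tracking of $\theta$--critical components under deletion of $V(P)$ and no use of Theorems~\ref{thm:multiplicity}, \ref{thm:matched_special} or~\ref{thm:gallais_lemma}. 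The real difficulty is not that each $\mu^{G\setminus P}(\theta)$ vanishes, but that it vanishes to sufficiently high order relative to the denominators; your proposal does not address this.
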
  

Note that Theorem~\ref{thm:stability} provides a satisfactory description of how the $\theta$--Gallai-Edmonds decomposition changes when a vertex in $\partial 0_\theta^G$ is deleted.

The next lemma is one of the main tools used in~\cite{spier2023refined} to prove Theorem~\ref{thm:stability}. In order to state this result, for two vertices $i$ and $j$ of $G$ we denote by $[i\to j]$ the set of paths starting at $i$ and ending at $j$. We also write $\lambda_P$ for the product of $-\lambda_e$ over the edges of the path $P$.  

\begin{lemma}[{\cite[Lem. 9]{spier2023refined}}]\label{lem:contraction} Given a multi-graph $G$ and two distinct vertices $i$ and $j$, we have  
\[
\alpha_i^G(x) =\alpha_i^{G\setminus j}(x) +\frac{\lambda_{i\sim j}^G(x)}{\alpha_j^{G\setminus i}(x)},  
\]  
\noindent where  
\[
\lambda_{i\sim j}^G(x) =\lambda_{j\sim i}^G(x) := -\sum_{P\in [i\to j]}\lambda_P\left(\frac{\mu^{G\setminus P}}{\mu^{G\setminus\{i,j\}}}(x)\right)^2.  
\]  
\end{lemma}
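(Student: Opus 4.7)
The plan is to reduce the lemma to an equivalent polynomial identity and prove it via a sign-reversing involution on pairs of matchings. Using the definition $\alpha_v^H(x) = \mu^H(x)/\mu^{H\setminus v}(x)$ and the formula given for $\lambda_{i\sim j}^G(x)$, clearing denominators in the displayed equation shows that the lemma is equivalent to
\[
\mu^{G\setminus i}(x)\,\mu^{G\setminus j}(x) - \mu^G(x)\,\mu^{G\setminus\{i,j\}}(x) \;=\; \sum_{P\in[i\to j]} \lambda_P\,\bigl(\mu^{G\setminus V(P)}(x)\bigr)^2,
\]
so it suffices to establish this identity, which I denote $(\star)$.

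Expanding each matching polynomial as $\mu^H(x) = \sum_{M\in\mathcal{M}_H} \prod_{v\notin V(M)}(x-r_v)\prod_{e\in M}\lambda_e$, the left-hand side of $(\star)$ becomes a signed sum over ordered pairs $(M_1, M_2)$ of matchings of $G$. A direct weight comparison shows that pairs with $i\notin V(M_1)\cup V(M_2)$ and $j\notin V(M_2)$ contribute to both the positive term $\mu^{G\setminus i}\mu^{G\setminus j}$ and the negative term $\mu^G\mu^{G\setminus\{i,j\}}$ with equal monomial weight and opposite sign, so they cancel. The surviving pairs satisfy either $i\in V(M_2)$ with $i\notin V(M_1)$ (from the positive term) or $i\in V(M_1)$ with $i\notin V(M_2)$ (from the negative term); in either case there is a canonical first edge at $i$ in the multiset $M_1\cup M_2$, and one can trace the maximal alternating trail $\pi$ starting at $i$ whose edges alternate between $M_1$ and $M_2$.

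The involution is: if $\pi$ does not end at $j$, swap the labels $M_1\leftrightarrow M_2$ along each edge of $\pi$. This produces a pair on the opposite side with the same monomial weight and opposite sign, and applying the swap twice recovers the original pair. The fixed points are precisely the pairs whose trail $\pi$ is a simple path from $i$ to $j$; each such pair decomposes uniquely as a simple path $P\in[i\to j]$ together with an ordered pair of matchings on $G\setminus V(P)$, and a parity analysis of the trail length shows that the positive and negative fixed points together contribute $+\lambda_P\,\bigl(\mu^{G\setminus V(P)}(x)\bigr)^2$, summing to the right-hand side of $(\star)$. The main obstacle is the combinatorial bookkeeping: carefully defining the alternating trail in the presence of parallel edges, self-loops, and edges shared by $M_1$ and $M_2$ (which appear twice in the multiset $M_1\cup M_2$), and verifying that the sign of the fixed-point contribution correctly equals $\lambda_P = \prod_{e\in P}(-\lambda_e)$ for both even and odd path lengths.
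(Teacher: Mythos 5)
Your reduction is correct: with $\alpha_v^H=\mu^H/\mu^{H\setminus v}$, clearing denominators (legitimate, since $\mu^{G\setminus i}$ and $\mu^{G\setminus\{i,j\}}$ are nonzero polynomials) turns the lemma into exactly your identity $(\star)$, which is the weighted form of the classical Godsil--Heilmann--Lieb path identity, and your involution does prove it. Bear in mind that this paper contains no proof to compare against: Lemma~\ref{lem:contraction} is imported from \cite{spier2023refined}, so your argument is an independent, self-contained route rather than a reconstruction of anything in the text. The bookkeeping you flag does close, and two observations are what make it close, so they should be stated explicitly in a full write-up. First, in every surviving pair the vertex $j$ is uncovered by $M_2$ (this is forced in both products), so $j$ has degree at most one in the multiset $M_1\cup M_2$ and can only occur as an endpoint of the alternating trail at $i$; similarly $i$ has degree one, so the trail is automatically a simple path, it never returns to $i$ even with parallel edges, and components formed by edges common to $M_1$ and $M_2$ (the doubled $2$-cycles) cannot contain $i$. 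This is precisely why the label swap along a trail not ending at $j$ yields a legal pair on the opposite side, and the uncovered-vertex factors match because coverage changes only at the trail's two endpoints, which are either covered or excluded from the relevant index ranges. Second, the parity check works out as you predict: a fixed-point trail $P$ of even length arises from the positive product and contributes $+\prod_{e\in P}\lambda_e$, one of odd length from the negative product and contributes $-\prod_{e\in P}\lambda_e$, and in both cases this equals $\lambda_P=\prod_{e\in P}(-\lambda_e)$; the residual matchings $M_1\setminus E(P)$ and $M_2\setminus E(P)$ then range independently over matchings of $G\setminus V(P)$, giving $\lambda_P\bigl(\mu^{G\setminus P}\bigr)^2$ per path, which sums to the right-hand side of $(\star)$. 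With those two verifications written out, plus the one-line remark that $\lambda_{i\sim j}^G=\lambda_{j\sim i}^G$ because path reversal is a weight-preserving bijection between $[i\to j]$ and $[j\to i]$, your proposal is a complete and correct proof of the lemma.
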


For precise statements on the consequences of Lemma~\ref{lem:contraction}, see~\cite[Prop. 20--24 and Fig. 5]{spier2023refined}. We also note that an analogue of Lemma~\ref{lem:contraction} for the characteristic polynomial has recently been used in~\cite{coutinho2023strong}. 

To prove Theorem~\ref{thm:main_theorem_introduction_2}, we also need to analyze how deleting vertices in $0_\theta^G$, as well as paths and cycles of $G$, affects the $\theta$--Gallai--Edmonds decomposition. For this reason, we now study the effect of removing a path or cycle from $G$ on $m_\theta(G)$, the multiplicity of $\theta$ as a zero of $\mu^G(x)$.

We recall the following result of Godsil~\cite[Cor. 2.5]{godsil1995algebraic} (see also~\cite[Lem. 16]{spier2023refined}), along with a simple consequence.  

\begin{lemma}[{\cite[Cor. 2.5]{godsil1995algebraic} }]\label{lem:critical_path} Let $P: i\to j$ be a path in $G$. Then $m_\theta(G\setminus P)\geq m_\theta(G) - 1$, with equality only if both $i$ and $j$ are in $0_\theta^G$.  
\end{lemma}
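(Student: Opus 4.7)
The plan would split into two cases. The degenerate case $|V(P)|=1$ (where $P=(i)$ with $i=j$ and $G\setminus V(P)=G\setminus i$) will follow immediately from the interlacing theorem (Theorem~\ref{thm:Heilmann-Lieb}) together with the vertex classification~\eqref{eq:vertex_classification}: removing a single vertex drops $m_\theta$ by at most one, and by exactly one precisely when the vertex lies in $0_\theta^G$.

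For the main case $|V(P)|\geq 2$ with $P=(v_0,\ldots,v_\ell)$, I would set $i=v_0$, $j=v_\ell$ and clear denominators in Lemma~\ref{lem:contraction} to obtain the identity
$$\mu^{G\setminus i}(x)\,\mu^{G\setminus j}(x)-\mu^G(x)\,\mu^{G\setminus\{i,j\}}(x)=\sum_{Q\in[i\to j]}\lambda_Q\,\bigl(\mu^{G\setminus V(Q)}(x)\bigr)^2.$$
The decisive observation is that $\lambda_Q=\prod_{e\in Q}(-\lambda_e)=\prod_{e\in Q}|\rho_e|^2>0$, so every summand on the right is a non-negative polynomial. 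Consequently, the leading coefficient of the right-hand side at order $2\delta^*$, where $\delta^*:=\min_{Q\in[i\to j]}m_\theta(G\setminus V(Q))$, is a sum of strictly positive quantities $\lambda_Q\,c_Q^2$ with $c_Q\neq 0$ the leading coefficient of $\mu^{G\setminus V(Q)}$ at $\theta$; no cancellation can occur, so the right-hand side vanishes at $\theta$ to order exactly $2\delta^*$.

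Setting $m=m_\theta(G)$, the Heilmann--Lieb interlacing yields $m_\theta(G\setminus i),m_\theta(G\setminus j)\geq m-1$ and $m_\theta(G\setminus\{i,j\})\geq m-2$, so the left-hand side vanishes at $\theta$ to order at least $2m-2$. Equating orders gives $\delta^*\geq m-1$, and hence $m_\theta(G\setminus V(P))\geq\delta^*\geq m-1$.

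For the equality case $m_\theta(G\setminus V(P))=m-1$, the left-hand side must vanish at $\theta$ to order exactly $2m-2$; this requires either $m_\theta(G\setminus i)+m_\theta(G\setminus j)=2m-2$ or $m_\theta(G\setminus\{i,j\})=m-2$, and both options, combined with the interlacing bounds, force $m_\theta(G\setminus i)=m_\theta(G\setminus j)=m-1$. By~\eqref{eq:vertex_classification} this means $i,j\in 0_\theta^G$, as required. The step I expect to require the most care is the positivity-based ``no cancellation'' claim, which is what lets us equate the orders of the two sides of the identity directly for general multi-graphs (and not just for trees, where the sum on the right has a single term).
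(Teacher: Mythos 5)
Your proof is correct, but it is worth pointing out that the paper itself contains no proof of Lemma~\ref{lem:critical_path}: it is imported from Godsil~\cite{godsil1995algebraic}, and the only justification sketched in the paper is the stepwise one recorded after Lemma~\ref{lem:entry_of_path} --- delete the path one vertex at a time, note by Theorem~\ref{thm:Heilmann-Lieb} and~\eqref{eq:vertex_classification} that each deletion changes $m_\theta$ by at most one, and use Lemma~\ref{lem:alpha_recursion} to see that a drop at step $j\geq 2$ forces a rise at step $j-1$; hence $W_\theta(P)\geq -1$, with equality only if the drop happens at the first vertex, so $i\in 0_\theta^G$, and $j\in 0_\theta^G$ by running the path backwards. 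Your route is genuinely different from that sketch: clearing denominators in Lemma~\ref{lem:contraction} to get
\[
\mu^{G\setminus i}\mu^{G\setminus j}-\mu^{G}\mu^{G\setminus\{i,j\}}=\sum_{Q\in[i\to j]}\lambda_Q\bigl(\mu^{G\setminus Q}\bigr)^2
\]
and exploiting $\lambda_Q>0$ (since $\rho_e\neq 0$) is sound: the set $[i\to j]$ is nonempty because it contains $P$, the no-cancellation step is exactly right since the coefficient of $(x-\theta)^{2\delta^*}$ on the right is $\sum_{Q:\,m_\theta(G\setminus Q)=\delta^*}\lambda_Q c_Q^2>0$, the interlacing bounds give order at least $2m_\theta(G)-2$ on the left, and your equality analysis (either $m_\theta(G\setminus i)+m_\theta(G\setminus j)=2m-2$ or $m_\theta(G\setminus\{i,j\})=m-2$, both forcing $i,j\in 0_\theta^G$) is complete; the degenerate single-vertex path is correctly disposed of by interlacing alone. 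In effect you have reconstructed the identity-based proof behind the cited corollary, and it delivers the bound simultaneously for every path in $[i\to j]$ in one stroke; the stepwise argument the paper leans on is more local, but it is the one that refines into the $W_\theta$ bookkeeping of Lemmas~\ref{lem:entry_of_path} and~\ref{lem:entry_of_path_start_critical}, which the paper needs later in the proof of Theorem~\ref{thm:theta_critical_cycle}.
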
  

\begin{corollary}\label{cor:critical_cycle} Let $C$ be a cycle in $G$. Then $m_\theta(G\setminus C)\geq m_\theta(G) - 1$,  with equality only if $C$ is contained in a $\theta$--critical component of $G$. 
\end{corollary}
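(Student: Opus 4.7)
The plan is to reduce the statement to Lemma~\ref{lem:critical_path} by interpreting the cycle $C$ as a Hamiltonian path of itself inside $G$.

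First, I would dispose of the degenerate case in which $C$ is a self-loop at some vertex $i$. In this case $V(C)=\{i\}$ and $G\setminus C=G\setminus i$, so Theorem~\ref{thm:Heilmann-Lieb} together with~\eqref{eq:vertex_classification} gives $m_\theta(G\setminus i)\ge m_\theta(G)-1$, with equality if and only if $i\in 0_\theta^G$; in that case the lone vertex of $C$ already sits in a $\theta$--critical component of $G$.

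For a cycle $C$ of length $k\ge 2$, I would list its vertices in cyclic order as $i_1,i_2,\ldots,i_k$ and let $P\colon i_1\to i_2\to\cdots\to i_k$ be the corresponding path in $G$. Since $V(P)=V(C)$, we have $G\setminus P=G\setminus C$, and Lemma~\ref{lem:critical_path} applied to $P$ immediately yields
\[
m_\theta(G\setminus C)=m_\theta(G\setminus P)\ge m_\theta(G)-1,
\]
with equality forcing the endpoints $i_1$ and $i_k$ to lie in $0_\theta^G$.

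The main step is to upgrade this endpoint condition to the full statement $V(C)\subseteq 0_\theta^G$. The point I would exploit is that $m_\theta(G\setminus C)$ depends only on the vertex set $V(C)$ and not on the cyclic traversal chosen, so any Hamiltonian path of $C$ inside $G$ is equally valid in the argument above. By starting the cyclic enumeration at an arbitrary vertex $i_j$ of $C$, I conclude $i_j\in 0_\theta^G$ for every $j$. Since $C$ is connected, its vertices then all lie in a single connected component of $G[0_\theta^G]$, i.e.\ in a $\theta$--critical component of $G$. I do not anticipate a serious obstacle beyond spotting the rotational symmetry that converts the endpoint condition from Lemma~\ref{lem:critical_path} into the desired global condition on $V(C)$.
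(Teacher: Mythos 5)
Your proposal is correct and follows essentially the same route as the paper: the paper's proof likewise observes that every vertex of $C$ is the endpoint of a path in $G$ on the same vertex set as $C$ and then invokes Lemma~\ref{lem:critical_path}, so that equality forces all of $V(C)$ into $0_\theta^G$ and hence $C$ into a single $\theta$--critical component. Your separate handling of the loop case via Theorem~\ref{thm:Heilmann-Lieb} is a harmless variant of the same idea (the single-vertex path covers it).
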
  
\begin{proof}  Every vertex of $C$ is the starting point of a path in $G$ that uses the same  vertices as $C$. The claim follows from Lemma~\ref{lem:critical_path}.  
\end{proof}  

We call a path or cycle that attains equality in Lemma~\ref{lem:critical_path} or Corollary~\ref{cor:critical_cycle} a \emph{$\theta$--critical path} or \emph{$\theta$--critical cycle} of $G$, respectively. 

The next result establishes the existence of $\theta$--critical paths under certain circumstances. While it can be proved using the same strategy as in~\cite[Lem. 3.8]{godsil1995algebraic}, we present here an alternative proof based on the results of~\cite{spier2023refined}.

\begin{lemma}[{\cite[Lem. 3.8]{godsil1995algebraic}}]\label{lem:mult1_implies_critical_path} Assume that $m_\theta(G) = 1$, and let $i, j\in 0^G_\theta$. Then there exists a $\theta$--critical path $P:i\to j$ in $G$.
\end{lemma}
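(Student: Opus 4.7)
The plan is to deduce the lemma in one shot from the polynomial identity obtained by clearing denominators in Lemma~\ref{lem:contraction}. The trivial case $i=j$ is dispatched first: the one-vertex ``path'' $\{i\}$ is automatically $\theta$--critical, since $i\in 0^G_\theta$ together with $m_\theta(G)=1$ forces $m_\theta(G\setminus i)=0=m_\theta(G)-1$ via the classification~\eqref{eq:vertex_classification}.

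For $i\neq j$, I would multiply the formula in Lemma~\ref{lem:contraction} through by $\alpha_j^{G\setminus i}(x)\,\mu^{G\setminus\{i,j\}}(x)^2$ and expand the definition of $\lambda_{i\sim j}^G$, which produces the polynomial identity
\[
\mu^G(x)\,\mu^{G\setminus\{i,j\}}(x)
\;=\;
\mu^{G\setminus i}(x)\,\mu^{G\setminus j}(x) \;-\; \sum_{P\in[i\to j]}\lambda_P\bigl(\mu^{G\setminus P}(x)\bigr)^2.
\]
Evaluating at $x=\theta$, the left-hand side vanishes because $m_\theta(G)=1$ gives $\mu^G(\theta)=0$. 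The hypothesis $i,j\in 0^G_\theta$ together with~\eqref{eq:vertex_classification} and $m_\theta(G)=1$ forces $\mu^{G\setminus i}(\theta)\neq 0$ and $\mu^{G\setminus j}(\theta)\neq 0$, so the first term on the right is nonzero. Rearranging then yields
\[
\sum_{P\in[i\to j]}\lambda_P\bigl(\mu^{G\setminus P}(\theta)\bigr)^2 \;=\; \mu^{G\setminus i}(\theta)\,\mu^{G\setminus j}(\theta)\;\neq\;0,
\]
so some path $P:i\to j$ must satisfy $\mu^{G\setminus P}(\theta)\neq 0$, i.e.\ $m_\theta(G\setminus P)=0=m_\theta(G)-1$. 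By the equality case of Lemma~\ref{lem:critical_path} this $P$ is the desired $\theta$--critical path.

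There is essentially no serious obstacle; the whole argument reduces to a single invocation of the contraction identity combined with the observation that a nonzero sum must have a nonzero summand. The only step requiring a little care is recognizing that the appropriate form of Lemma~\ref{lem:contraction} is its cleared polynomial version rather than the rational-function formula stated in the paper, since at $\theta$ the individual factors $\alpha_j^{G\setminus i}$ and $\lambda_{i\sim j}^G$ may themselves be infinite.
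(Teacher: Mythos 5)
Your proof is correct, and it takes a genuinely different route from the paper. The paper evaluates Lemma~\ref{lem:contraction} at $\theta$ in its rational-function form: it first deduces $\lambda_{i\sim j}^G(\theta)\neq 0$ from $\alpha_j^{G\setminus i}(\theta)\neq 0$, then splits into the cases $\lambda_{i\sim j}^G(\theta)\in(-\infty,0)$ and $\lambda_{i\sim j}^G(\theta)=-\infty$, invoking~\cite[Prop.~20--24 or Fig.~5]{spier2023refined} in each case to pin down $m_\theta(G\setminus\{i,j\})$ before extracting a path with $m_\theta(G\setminus P)=0$. You instead clear denominators once and for all, obtaining the weighted path-sum identity $\mu^G\mu^{G\setminus\{i,j\}}=\mu^{G\setminus i}\mu^{G\setminus j}-\sum_{P\in[i\to j]}\lambda_P(\mu^{G\setminus P})^2$ (your multiplication by $\alpha_j^{G\setminus i}(\mu^{G\setminus\{i,j\}})^2$ checks out), and then the hypotheses $m_\theta(G)=1$ and $i,j\in 0^G_\theta$ kill the left side while keeping $\mu^{G\setminus i}(\theta)\mu^{G\setminus j}(\theta)\neq 0$, so some summand, hence some path, is nonvanishing; your separate treatment of $i=j$ via the trivial path is also needed and correct, since the contraction identity requires distinct vertices. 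What you have rediscovered is essentially the classical argument behind~\cite[Lem.~3.8]{godsil1995algebraic} via the Christoffel--Darboux-type identity, which the paper explicitly declines in favor of an ``alternative proof based on the results of~\cite{spier2023refined}.'' Your version is more elementary and avoids both the case analysis and the appeal to the external propositions on the behavior of $\lambda_{i\sim j}^G(\theta)$; the paper's version buys consistency with its general framework, in which the evaluated quantities $\alpha_i^G(\theta)$ and $\lambda_{i\sim j}^G(\theta)$ (including their infinite values) are the basic objects used throughout Sections~\ref{sec:new_gallai_edmonds} and~\ref{sec:proof_of_main_results}.
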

\begin{proof}  
Since $i\in 0^G_\theta$ and $m_\theta(G) = 1$, it follows that $m_\theta(G\setminus i) = 0$. As a consequence, $0^{G\setminus i}_\theta =\emptyset$ and $\alpha^{G\setminus i}_j(\theta)\neq 0$. By Lemma~\ref{lem:contraction} (or~\cite[Prop. 21]{spier2023refined}), this implies $\lambda_{i\sim j}^G(\theta)\neq 0$.  

Recall that  
\[
\lambda_{i\sim j}^G(\theta) = -\sum_{P\in [i\to j]}\lambda_P\left(\dfrac{\mu^{G\setminus P}}{\mu^{G\setminus\{i,j\}}}(\theta)\right)^2.
\]  

If $\lambda_{i\sim j}^G(\theta)\in (-\infty,0)$, then by~\cite[Prop. 22(f) or Fig. 5]{spier2023refined} we have $j\notin\infty^{G\setminus i}_\theta$ and $m_\theta(G\setminus\{i,j\}) = 0$. Since $\lambda_{i\sim j}^G(\theta)\in (-\infty,0)$, by the expression above, there exists a path $P\in [i\to j]$ with  
\[
m_\theta(G\setminus P)\leq m_\theta(G\setminus\{i,j\}) = 0,
\]  
\noindent so $m_\theta(G\setminus P) = 0$, and $P$ is a $\theta$--critical path in $G$.  

On the other hand, if $\lambda_{i\sim j}^G(\theta) = -\infty$, then by~\cite[Prop. 20 or Fig. 5]{spier2023refined} we have $j\in\infty^{G\setminus i}_\theta$ and $m_\theta(G\setminus\{i,j\}) = 1$. In this case, as $\lambda_{i\sim j}^G(\theta) = -\infty$, by the expression above there exists a path $P\in [i\to j]$ such that  
\[
m_\theta(G\setminus P) < m_\theta(G\setminus\{i,j\})=1,
\]  
\noindent so $m_\theta(G\setminus P)=0$, and again $P$ is a $\theta$--critical path in $G$.  
\end{proof}

We conclude this section with two final results about paths, which will be needed in the proof of Theorem~\ref{thm:main_theorem_technical}.

For a path $P:\,i_1,\dots,i_k$ in $G$ define
\[
W_\theta(P)\defeq\bigl|\{j\in[k]\mid i_j\in\infty_\theta^{G\setminus\{i_1,\dots,i_{j-1}\}}\}\bigr|-\bigl|\{j\in[k]\mid i_j\in 0_\theta^{G\setminus\{i_1,\dots,i_{j-1}\}}\}\bigr|.
\]

\begin{lemma}[{\cite[p. 9]{spier2023refined}}]\label{lem:entry_of_path} Let $P$ be a path in $G$. Then
\[
m_\theta(G\setminus P)=m_\theta(G)+W_\theta(P).
\]
\end{lemma}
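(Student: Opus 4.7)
The plan is to prove the identity by a direct telescoping argument, peeling off the vertices of the path $P$ one at a time and invoking the vertex classification in~\eqref{eq:vertex_classification} (which itself is a consequence of the Heilmann–Lieb interlacing Theorem~\ref{thm:Heilmann-Lieb}) at each step.

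More precisely, I would set $G_0 := G$ and $G_j := G \setminus \{i_1, \dots, i_j\}$ for $1 \le j \le k$, so that $G_j = G_{j-1} \setminus i_j$ and $G_k = G \setminus P$. By~\eqref{eq:vertex_classification} applied to the multi-graph $G_{j-1}$ and its vertex $i_j$, the difference $m_\theta(G_j) - m_\theta(G_{j-1})$ equals $-1$, $0$, or $+1$ precisely according to whether $i_j$ belongs to $0_\theta^{G_{j-1}}$, $\pm_\theta^{G_{j-1}}$, or $\infty_\theta^{G_{j-1}}$. Summing these differences gives
\[
m_\theta(G \setminus P) - m_\theta(G) = \sum_{j=1}^k \bigl(m_\theta(G_j) - m_\theta(G_{j-1})\bigr),
\]
and after grouping the $+1$ and $-1$ contributions according to the above case distinction, the right-hand side is exactly $W_\theta(P)$ by definition.

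There is no real obstacle here: the only subtle point is making sure that $G_{j-1}$ is a well-defined multi-graph and that $i_j$ is a vertex of it, which is immediate because $P$ is a path and therefore its vertices $i_1, \dots, i_k$ are distinct. The Heilmann–Lieb theorem applies to every intermediate multi-graph $G_{j-1}$, so the classification~\eqref{eq:vertex_classification} is valid at each step, and the telescoping sum is unambiguous.
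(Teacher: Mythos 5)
Your proof is correct and is exactly the argument behind the paper's one-line proof, which notes that the lemma is an immediate consequence of the classification in~\eqref{eq:vertex_classification}: your telescoping over $G_{j-1}\setminus i_j$ just makes that deduction explicit. Nothing further is needed.
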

\begin{proof} This result is an immediate consequence of Equation~\eqref{eq:vertex_classification}.
\end{proof}

Note that, by Lemmas~\ref{lem:critical_path} and~\ref{lem:entry_of_path}, we always have $W_\theta(P)\geq -1$, with equality if and only if $P$ is a $\theta$--critical path of $G$. Furthermore, as explained in~\cite[Lem. 16]{spier2023refined}, the inequality $W_\theta(P)\geq -1$ can also be obtained from the fact that if $i_j\in 0_\theta^{G\setminus\{i_1,\dots, i_{j-1}\}}$, then, since $i_j$ and $i_{j-1}$ are neighbors, Lemma~\ref{lem:alpha_recursion} (see also~\cite[Lem. 15]{spier2023refined} or~\cite[Lem. 3.4]{godsil1995algebraic}) implies that $i_{j-1}\in\infty_\theta^{G\setminus\{i_1,\dots, i_{j-2}\}}$.

\begin{lemma}\label{lem:entry_of_path_start_critical} Let $P:\, i_1,\dots, i_k$ be a path in $G$, and assume that $i_1\in 0_\theta^G$ and $W_\theta(P)\geq 0$. Then there exists a minimal sub-path $\tilde{P}:\, i_1,\dots, i_\ell$ of $P$ such that $W_\theta(\tilde{P}) = 0$. Moreover, $\tilde{P}$ coincides with $P$ if and only if $k$ is the smallest index such that $i_k\in\infty_\theta^{G\setminus\{i_1,\dots, i_{k-1}\}}$.
\end{lemma}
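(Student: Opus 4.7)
The plan is to treat the sequence $\bigl(W_\theta(P_j)\bigr)_{j=1}^{k}$, where $P_j$ denotes the sub-path $i_1,\ldots,i_j$, as a discrete walk and apply an intermediate value argument. First I would note that each successive difference $W_\theta(P_j)-W_\theta(P_{j-1})$ lies in $\{-1,0,+1\}$, equaling $+1$ (respectively $-1$) exactly when $i_j\in\infty_\theta^{G\setminus\{i_1,\ldots,i_{j-1}\}}$ (respectively $0_\theta^{G\setminus\{i_1,\ldots,i_{j-1}\}}$). Since $i_1\in 0_\theta^G$ we have $W_\theta(P_1)=-1$, while by hypothesis $W_\theta(P_k)\geq 0$. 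A discrete intermediate value argument then yields a least index $\ell\in\{2,\ldots,k\}$ with $W_\theta(P_\ell)=0$, and $\tilde P:=P_\ell$ is the desired minimal sub-path.

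For the ``moreover'' part I would use the implication, recalled in the paragraph after Lemma~\ref{lem:entry_of_path}, that $i_j\in 0_\theta^{G\setminus\{i_1,\ldots,i_{j-1}\}}$ forces $i_{j-1}\in\infty_\theta^{G\setminus\{i_1,\ldots,i_{j-2}\}}$ (or, in the $j=2$ case, forces $i_1\in\infty_\theta^G$, which the hypothesis $i_1\in 0_\theta^G$ rules out). Suppose first that $\ell=k$. Then $W_\theta(P_{k-1})<0$ while $W_\theta(P_k)=0$, so by the unit-step property we must have $W_\theta(P_{k-1})=-1$ and the step at position $k$ is an up-step, giving $i_k\in\infty_\theta^{G\setminus\{i_1,\ldots,i_{k-1}\}}$. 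If some $j_0<k$ were the smallest index with $i_{j_0}$ of infinity type, then for every $1<j<j_0$ the vertex $i_j$ would have to lie in $\pm_\theta^{G\setminus\{i_1,\ldots,i_{j-1}\}}$ (the infinity class excluded by minimality of $j_0$, the zero class by the implication above), so $W_\theta$ would remain at $-1$ up to position $j_0-1$ and then jump to $0$ at position $j_0$, contradicting $\ell=k$.

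The converse direction is symmetric: if $k$ is the smallest index with $i_k\in\infty_\theta^{G\setminus\{i_1,\ldots,i_{k-1}\}}$, the same bookkeeping yields $i_j\in\pm_\theta^{G\setminus\{i_1,\ldots,i_{j-1}\}}$ for every $1<j<k$, hence $W_\theta(P_j)=-1$ for all $j<k$ and $W_\theta(P_k)=0$, so $\ell=k$. I do not anticipate any serious obstacle here; the whole statement amounts to careful bookkeeping for a $\{-1,0,+1\}$-valued walk that starts at $-1$ and can only leave the level $-1$ upward via an $\infty$-step, precisely because the implication from Lemma~\ref{lem:entry_of_path} prevents an immediate $0$-step.
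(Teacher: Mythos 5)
Your proposal is correct and takes essentially the same route as the paper: both arguments rest on reading $W_\theta$ as a walk with steps in $\{-1,0,+1\}$ together with the observation (via Lemma~\ref{lem:alpha_recursion}, as recalled after Lemma~\ref{lem:entry_of_path}) that a zero-type vertex forces the preceding vertex to be of infinity type, so the walk stays at $-1$ until the first infinity-type index. The only difference is organizational: you define $\ell$ as the first time the walk hits $0$ and then identify it with the smallest infinity-type index, whereas the paper defines $\ell$ as the smallest infinity-type index and deduces directly that $W_\theta(\tilde{P})=0$ with all proper prefixes at $-1$.
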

\begin{proof} By the definition of $W_\theta(P)$, since $i_1\in 0_\theta^G$ and $W_\theta(P)\geq 0$, there exists some index $j\in [k]\setminus\{1\}$ such that $i_j\in\infty_\theta^{G\setminus\{i_1,\dots, i_{j-1}\}}$. Let $\ell\in [k]\setminus\{1\}$ be the smallest such index, and consider the sub-path $\tilde{P}:\, i_1,\dots, i_\ell$. Note that, by the discussion above, $i_j\in\pm^{G\setminus\{i_1,\dots, i_{j-1}\}}_\theta$ for any $j\in [\ell-1]\setminus\{1\}$. Therefore, we have $W_\theta(\tilde{P}) = 0$ and $W_\theta(P') = -1$, for every sub-path $P':\, i_1,\dots, i_j$ of $\tilde{P}$ with $j\in [\ell-1]$. This proves the existence of the minimal sub-path $\tilde{P}$.

Note that $\tilde{P}$ coincides with $P$ if and only if $\ell$ is equal to $k$.
\end{proof}


\subsection{Covers}\label{sec:covers}\

Following~\cite{gross2001topological,li2025eigenvalues,banks2022point,avni2020periodic}, we define the universal cover $G^\uni$ and the maximal abelian cover $G^\ab$ of a multi-graph $G$, together with their adjacency operators, spectra, and eigenvalues. Our construction of these covers differs slightly from that in~\cite{li2025eigenvalues} and is closer to the approach of~\cite{gross2001topological}, which is based on arc functions. In particular, unlike in~\cite{li2025eigenvalues}, we use $G^\uni$ to denote the actual universal cover of $G$.

In this section, and throughout the remainder of the paper, we fix an arbitrary orientation of the edges of $G$, and denote by $\vec{E}_+(G)$ and $\vec{E}_-(G)$ the sets of positive and negative arcs, respectively. Thus, each edge $e\in E(G)$ is represented once in $\vec{E}_+(G)$ and $\vec{E}_-(G)$, with opposite orientations. Let $\mathbb{F}_{\vec{E}_+(G)}$ be the free group generated by $\vec{E}_+(G)$. We consider the inverse of a generator as the reverse arc. In this way, the set of generators together with their inverses can be identified with the set of arcs $\vec{E}(G)$. Fix a maximal spanning forest $F$ of $G$, and let $S_+$ denote the set of positive arcs of $G$ not contained in $F$. We note that different choices of positive arcs $\vec{E}_+(G)$ and maximal spanning forest $F$ will lead to equivalent constructions. For further details we refer the reader to~\cite{gross2001topological}.

For a group $\mathcal{G}$ and a homomorphism $\phi:\Fds_{\vec{E}_+(G)}\to\mathcal{G}$, we call $\phi$ \emph{normalized} if every edge in $F$ is mapped to the identity. Now, given a group $\mathcal{G}$ and a normalized surjective homomorphism $\phi:\Fds_{\vec{E}_+(G)}\to\mathcal{G}$, we define the \emph{cover} $G^\phi$ of $G$ as follows. The vertex set is $V(G)\times\mathcal{G}$, and there is an arc joining $(o(e),g)$ to $(t(e),h)$ if $e\in\vec{E}(G)$ and $h=g\cdot\phi(e)$. Note that the reverse arc is also included with this definition. It can be shown that this construction preserves connectedness, so $G^\phi$ always has the same number of connected components as $G$~\cite[p. 91]{gross2001topological}.

If $\phi_1:\Fds_{\vec{E}_+(G)}\to\mathcal{G}_1$ and $\phi_2:\Fds_{\vec{E}_+(G)}\to\mathcal{G}_2$ are normalized surjective homomorphisms such that $\phi_2 =\pi\circ\phi_1$ for some homomorphism $\pi:\mathcal{G}_1\to\mathcal{G}_2$, then $G^{\phi_1}$ is a cover of $G^{\phi_2}$. The corresponding covering map $\Pi:G^{\phi_1}\to G^{\phi_2}$ is defined on vertices by $\Pi(i,g) = (i,\pi(g))$. Moreover, for $e\in\vec{E}(G)$, it maps the arc from $(o(e),g)$ to $(t(e),g\cdot\phi_1(e))$ in $G^{\phi_1}$ to the arc from $(o(e),\pi(g))$ to $(t(e),\pi(g)\cdot\phi_2(e))$ in $G^{\phi_2}$.

The original multi-graph $G$ corresponds, in this construction, to the trivial homomorphism $\phi_{\mathrm{triv}}:\Fds_{\vec{E}_+(G)}\to\{1\}$ that sends all generators to the identity. The \emph{universal cover} $G^\uni$ corresponds to the normalized homomorphism $\phi_\uni:\Fds_{\vec{E}_+(G)}\to\mathbb{F}_{S_+}$, where $\mathbb{F}_{S_+}$ is the free group on $S_+$, and arcs in $S_+$ are mapped to their corresponding generators in $\mathbb{F}_{S_+}$. The \emph{maximal abelian cover} $G^\ab$ corresponds to the normalized homomorphism $\phi_\ab:\Fds_{\vec{E}_+(G)}\to\mathbb{F}^\ab_{S_+}$, where $\mathbb{F}^\ab_{S_+}$ is the free abelian group on $S_+$, and arcs in $S_+$ are mapped to their corresponding generators in $\mathbb{F}^\ab_{S_+}$. In particular, as $G^{\phi}$ is always a cover of $G$ we can lift the weights of $G$ to $G^\phi$ by pull-back. In this way, $G^\uni$ and $G^\ab$ are weighted multi-graphs. 

The universal cover $G^\uni$ and maximal abelian cover $G^\ab$ can also be constructed explicitly by considering non-backtracking walks starting from a given vertex of $G$, as in~\cite{banks2022point, li2025eigenvalues}. In this way, both the universal and maximal abelian covers rooted at a given vertex are analogous to the rooted path tree introduced in Section~\ref{sec:matching_pol}. Moreover, if $G$ itself is a forest, then $F = G$ and $S_+ =\emptyset$. In this case, $G^\uni$ and $G^\ab$ are equal to $G$.

We can then define the adjacency operators $A^{G^\uni}$ and $A^{G^\ab}$, as at the start of Section~\ref{sec:covers}, which are bounded Hermitian operators on the Hilbert spaces $\ell^2(V(G^\uni))$ and $\ell^2(V(G^\ab))$, respectively. The spectra and eigenvalues of $G^\uni$ and $G^\ab$ are defined as those of the corresponding adjacency operators. Since these operators are Hermitian, the spectrum and eigenvalues of $G^\uni$ and $G^\ab$ are real. For further details, see~\cite{li2025eigenvalues, banks2022point}.

As mentioned in Section~\ref{sec:introduction}, Banks et al.~\cite{banks2022point} established, in Theorem~\ref{thm:universal_cover_eigenvalue}, a criterion for a real number $\theta$ to be an eigenvalue of $G^\uni$ in terms of $\theta$--Aomoto subsets. We now define what a \emph{(refined) $\theta$--Aomoto subset} is.

\begin{definition}[$\theta$--Aomoto subset]\label{def:theta_aomoto} A subset $S\subseteq V(G)$ is a $\theta$--Aomoto subset of $G$ if:
\begin{itemize}
    \item $G[S]$ is a forest,
    \item $\theta$ is an eigenvalue of each component of $G[S]$, and
    \item $\lvert\partial_G S\rvert <\mathrm{cc}(G[S])$.
\end{itemize}
A $\theta$--Aomoto subset is refined if, in addition, every component of $G[S]$ is $\theta$--critical and each $\emptyset\neq U\subseteq\partial_G S$ is adjacent to at least $\lvert U\rvert + 1$ components of $G[S]$.
\end{definition}

The next result shows that we can always replace a $\theta$--Aomoto subset by a refined one.

\begin{proposition}\label{prop:aomoto_implies_refined_aomoto} Every $\theta$--Aomoto subset $S$ of $G$ has a subset $\tilde{S}$ that is a refined $\theta$--Aomoto subset of $G$. Moreover, $\mathrm{cc}(G[\tilde{S}])-\lvert\partial_G \tilde{S}\rvert\geq \mathrm{cc}(G[S])-\lvert\partial_G S\rvert$.
\end{proposition}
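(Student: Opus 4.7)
The plan is to proceed by induction on $|S|$: if $S$ is already refined, take $\tilde S = S$; otherwise, I will exhibit a proper $\theta$-Aomoto subset $S' \subsetneq S$ satisfying $\mathrm{cc}(G[S']) - |\partial_G S'| \geq \mathrm{cc}(G[S]) - |\partial_G S|$, to which the inductive hypothesis applies. There are two ways $S$ can fail to be refined, handled by two reduction moves.

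First, suppose some component $C$ of $G[S]$ is not $\theta$-critical. Since $C$ is a forest with $\theta$ as an eigenvalue, Theorem~\ref{thm:matching_equals_characteristic} gives $m_\theta(C) \geq 1$, so $0_\theta^C \neq \emptyset$. Set $S' := (S \setminus C) \cup 0_\theta^C$; then $S' \subsetneq S$ because $C$ was not $\theta$-critical. The components of $G[S']$ are the old components different from $C$ together with the $\theta$-critical components of $C$, and by Gallai's Lemma (Theorem~\ref{thm:gallais_lemma}) $\theta$ remains an eigenvalue of every component. A direct vertex-by-vertex check shows $\partial_G S' \subseteq \partial_G S \sqcup \partial_C 0_\theta^C$, while Theorem~\ref{thm:multiplicity} applied to $C$ yields $\mathrm{cc}(C[0_\theta^C]) = m_\theta(C) + |\partial_C 0_\theta^C|$; combining these I obtain $\mathrm{cc}(G[S']) - |\partial_G S'| \geq (\mathrm{cc}(G[S]) - 1) + m_\theta(C) - |\partial_G S| \geq \mathrm{cc}(G[S]) - |\partial_G S|$.

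Second, suppose some nonempty $U \subseteq \partial_G S$ is adjacent to at most $|U|$ components of $G[S]$; let $N(U)$ denote this set of components. I set $S' := S \setminus \bigcup N(U)$. Since $N(U) \neq \emptyset$, $S' \subsetneq S$, and the components of $G[S']$ are precisely those components of $G[S]$ not in $N(U)$, so $\theta$ still is an eigenvalue of each. Every neighbor of $U$ in $S$ lies in a component of $N(U)$, all of which are deleted, so $\partial_G S' \subseteq \partial_G S \setminus U$. Hence $\mathrm{cc}(G[S']) - |\partial_G S'| \geq (\mathrm{cc}(G[S]) - |N(U)|) - (|\partial_G S| - |U|) \geq \mathrm{cc}(G[S]) - |\partial_G S|$.

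In both cases the strict inequality $\mathrm{cc}(G[S']) > |\partial_G S'|$ transfers from $S$, so $S'$ is a $\theta$-Aomoto subset and induction completes the proof. The main point requiring care is the inclusion $\partial_G S' \subseteq \partial_G S \sqcup \partial_C 0_\theta^C$ in the first case, which combined with Theorem~\ref{thm:multiplicity} delivers the multiplicity bound; the remaining bookkeeping is routine.
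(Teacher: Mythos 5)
Your argument is essentially the paper's proof reorganized as an induction on $\lvert S\rvert$: your first move (replacing a non-critical tree component $C$ by $0_\theta^C$ and counting via Theorem~\ref{thm:multiplicity}) and your second move (deleting the components adjacent to a violating set $U$) are exactly the paper's two steps, which it carries out in one shot --- all trees at once, and then the maximal violating set $U'$ --- rather than one at a time; the frontier inclusions and the bookkeeping in both moves are correct, and the transferred strict inequality does keep $S'$ nonempty, so the induction terminates at a refined subset. The one imprecise point is the appeal to Gallai's Lemma in the first move: Theorem~\ref{thm:gallais_lemma} applies only after you know that each component $K$ of $C[0_\theta^C]$ is $\theta$--critical as a graph in its own right (i.e.\ $V(K)=0_\theta^K$), and that is precisely the nontrivial fact you need in order to conclude that $\theta$ is an eigenvalue of $K$; it does not follow from the definition of $0_\theta^C$, since there the graph continued fractions are taken with respect to $C$, not $K$. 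The paper obtains it from Theorem~\ref{thm:stability} (deleting the vertices of $\partial_C 0_\theta^C$ leaves the continued fractions of the remaining vertices unchanged); replace the citation of Gallai's Lemma by that argument and your proof is complete.
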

\begin{proof} Let $S$ be a $\theta$--Aomoto subset of $G$ with components $T_1,\dots, T_k$. Thus $\lvert\partial_G S\rvert <\mathrm{cc}(G[S])=k$ and each $T_i$ is a tree with $\theta$ as an eigenvalue. By Theorem~\ref{thm:matching_equals_characteristic}, $\theta$ is a zero of $\mu^{T_i}(x)$, so $m_\theta(T_i)\geq 1$ and $0^{T_i}_\theta\neq\emptyset$ for each $i$. 

Define $S'\defeq\bigcup_{i\in [k]}0_\theta^{T_i}$. Clearly $\emptyset\neq S'\subseteq S$. Note that $G[S']$ is a forest whose components are the union of the $\theta$--critical components of $T_i$ over $i$ in $[k]$. Therefore, by Theorem~\ref{thm:stability}, the components of $G[S']$ are $\theta$--critical. We claim that $\mathrm{cc}(G[S'])-\lvert\partial_G S'\rvert\geq\mathrm{cc}(G[S]) -\lvert\partial_G S\rvert > 0$, and hence $S'$ is a $\theta$--Aomoto subset of $G$.

For each $i$ in $[k]$, Theorem~\ref{thm:multiplicity} implies 
\[
\mathrm{cc}(0_\theta^{T_i})-\lvert\partial_{T_i}0_\theta^{T_i}\rvert=m_\theta(T_i)\geq 1.
\]
Also note that $\mathrm{cc}(G[S'])=\sum_{i\in [k]}\mathrm{cc}(0_\theta^{T_i})$ and \(\partial_G S'\subseteq\partial_G S\cup\bigcup_{i\in [k]}\partial_{T_i}0_\theta^{T_i}\).
Therefore,
\begin{align*}
\mathrm{cc}(G[S'])- \lvert\partial_G S'\rvert&\geq k+\sum_{i\in [k]}\lvert\partial_{T_i}0_\theta^{T_i}\rvert- \lvert\partial_G S'\rvert\\&=\mathrm{cc}(G[S])+\sum_{i\in [k]}\lvert\partial_{T_i}0_\theta^{T_i}\rvert- \lvert\partial_G S'\rvert\\&=\left(\mathrm{cc}(G[S])-\lvert \partial_G S\rvert\right)+\lvert \partial_G S\rvert+\sum_{i\in [k]}\lvert\partial_{T_i}0_\theta^{T_i}\rvert- \lvert\partial_G S'\rvert\\&\geq\left(\mathrm{cc}(G[S])-\lvert \partial_G S\rvert\right)+\Bigl\lvert\partial_G S\cup\bigcup_{i\in [k]}\partial_{T_i}0_\theta^{T_i}\Bigr\rvert- \lvert\partial_G S'\rvert\\
&\geq \left(\mathrm{cc}(G[S])-\lvert \partial_G S\rvert\right)\\&>0.
\end{align*}
This proves our claim.

Now observe that if $U, V\subseteq\partial S'$ are adjacent to at most $\lvert U\rvert$ and $\lvert V\rvert$ components of $G[S']$, respectively, then $U\cup V$ is adjacent to at most $\lvert U\cup V\rvert$ components of $G[S']$. Let $U'$ denote the union of all subsets $U\subseteq\partial_G S'$ that are adjacent to at most $\lvert U\rvert$ components of $G[S']$. Then $U'$ itself is adjacent to at most $\lvert U'\rvert$ components of $G[S']$.

Consider $\tilde{S}\subseteq S'$ defined as the union of the components of $G[S']$ not adjacent to $U'$. We claim that $\tilde{S}$ is a refined $\theta$--Aomoto subset of $G$ and $\mathrm{cc}(G[\tilde{S}])-\lvert\partial_G \tilde{S}\rvert\geq \mathrm{cc}(G[S'])-\lvert\partial_G S'\rvert>0$. Note that since $\partial_G \tilde{S} \subseteq \partial_G S' \setminus U'$, it follows that $\lvert \partial_G \tilde{S} \rvert \leq \lvert \partial_G S' \rvert - \lvert U' \rvert$. Furthermore, because $U'$ is adjacent to at most $\lvert U' \rvert$ components of $G[S']$, we obtain $\mathrm{cc}(G[\tilde{S}]) \geq \mathrm{cc}(G[S']) - \lvert U' \rvert$. Therefore,
\begin{align*}
\mathrm{cc}(G[\tilde{S}]) - \lvert \partial_G \tilde{S} \rvert &\geq (\mathrm{cc}(G[S']) - \lvert U' \rvert) - (\lvert \partial_G S' \rvert - \lvert U' \rvert) \\&\geq \mathrm{cc}(G[S']) - \lvert \partial_G S' \rvert \\& > 0.
\end{align*}

Moreover, $G[\tilde{S}]$ is a forest, and every component of $G[\tilde{S}]$ is $\theta$--critical. Now, given $\emptyset\neq U\subseteq\partial_G\tilde{S}$, if $U$ were adjacent to at most $\lvert U\rvert$ components of $G[\tilde{S}]$, then $U\cup U'\supsetneq U'$ would be adjacent to at most $\lvert U\cup U'\rvert$ components of $G[S']$, contradicting our choice of $U'$. Hence, $U$ is adjacent to at least $\lvert U\rvert +1$ components of $G[\tilde{S}]$. This proves our claim and completes the proof.
\end{proof}

As observed in~\cite[p. 33]{spier2021graph}, Theorem~\ref{thm:universal_cover_eigenvalue} can be viewed as a version of the $\theta$--Gallai-Edmonds decomposition where the path tree is replaced by the universal cover. Indeed, the definition of (refined) $\theta$--Aomoto subsets mirrors the definition of $\theta$--critical components in Section~\ref{sec:matching_pol} and has a property analogous to the one stated in Theorem~\ref{thm:matched_special}. We now describe another aspect of this analogy, this time with respect to the \emph{density of states} of $G^\uni$.

Let $\Pi_\uni: G^\uni \to G$ be the covering map from $G^\uni$ to $G$.  For each $j \in V(G^\uni)$, let $\chi_j$ denote the characteristic vector of $j$ on $\ell^2(V(G^\uni))$. For every vertex $i \in V(G)$ and $n \in \Nds$, $\langle \chi_{\tilde{i}}, (A^{G^\uni})^n \chi_{\tilde{i}} \rangle$ is real and constant for all $\tilde{i} \in \Pi_\uni^{-1}(i)$.  

Given a vertex $i \in V(G)$, the \emph{spectral measure} $\tau_i$ is the unique measure on $\Rds$ satisfying  
\[
\langle \chi_{\tilde{i}}, (A^{G^\uni})^n \chi_{\tilde{i}} \rangle = \int_{\Rds} x^n \, d\tau_i(x),
\]
for every $\tilde{i} \in \Pi_\uni^{-1}(i)$ and $n \in \Nds$.

The \emph{density of states} of $G^\uni$ is then defined as the measure obtained by averaging the measures $\tau_i$ over all vertices $i \in V(G)$:  
\[
\tau \defeq \frac{1}{\lvert V(G)\rvert} \sum_{i \in V(G)} \tau_i.
\]  

It was shown in~\cite{banks2022point} that for every eigenvalue $\theta$ of $G^\uni$, one can associate a specific $\theta$--Aomoto subset of $G$ that is related to $\tau(\theta)$. For a real number $\theta$, let $X_\theta^G$ denote the set of vertices $i \in V(G)$ such that $\tau_i(\theta) \neq 0$.  The following result was established by Banks et al.~\cite{banks2022point}.

\begin{theorem}[{\cite[Thm. 3.1]{banks2022point}}]\label{thm:the_aomoto_set} Let $\theta$ be an eigenvalue of $G^\uni$. Then $X_\theta^G$ is a $\theta$--Aomoto subset of $G$, and $\tau(\theta) = \mathrm{cc}(G[X_\theta^G])-\lvert \partial_G X_\theta^G \rvert$.
\end{theorem}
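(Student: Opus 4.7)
The plan is to connect the atomic mass $\tau_i(\{\theta\})$ of the spectral measure at $i$ to the existence of an $\ell^{2}$-eigenfunction of $A^{G^\uni}$ at eigenvalue $\theta$ that is nonzero at a lift of $i$, and then to extract the combinatorial shape of such eigenfunctions using the matching-polynomial machinery of Section~\ref{sec:matching_pol}. The argument splits naturally into (i) a lower bound on $\tau(\theta)$ together with the containment $S \subseteq X_\theta^G$ for every refined $\theta$--Aomoto subset $S$, and (ii) a matching upper bound together with verification that $X_\theta^G$ is itself a $\theta$--Aomoto subset of $G$.

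For part (i), start with a refined $\theta$--Aomoto subset $S$, which exists by Proposition~\ref{prop:aomoto_implies_refined_aomoto}. Each component $T_j$ of $G[S]$ is a $\theta$--critical tree, so by iterated application of Theorem~\ref{thm:stability} it carries a $\theta$--eigenvector $v_j$ that is nonzero on every vertex of $T_j$. The idea is to lift the combination $\sum_j c_j v_j$ to $G^\uni$ and average over a suitable orbit of the deck transformation group so that the eigenequation is satisfied at every lift of $\partial_G S$; the Hall-type inequality $|\partial_G S| < \mathrm{cc}(G[S])$, reinforced by the refined condition in Definition~\ref{def:theta_aomoto} via Theorem~\ref{thm:matched_special}-style counting, should yield a family of $\ell^{2}$-eigenfunctions of dimension $\mathrm{cc}(G[S]) - |\partial_G S|$ per fundamental domain. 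Nonvanishing of each $v_j$ on $T_j$ then gives both $S \subseteq X_\theta^G$ and the lower bound $\tau(\theta) \geq (\mathrm{cc}(G[X_\theta^G]) - |\partial_G X_\theta^G|)/|V(G)|$.

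For part (ii), exploit that $G^\uni$ is a tree to write a continued-fraction recursion for the resolvent $m_{\tilde i}(z) = \langle \chi_{\tilde i}, (z - A^{G^\uni})^{-1} \chi_{\tilde i} \rangle$ at each vertex, structurally identical to Lemma~\ref{lem:alpha_recursion}. A self-consistency analysis of this recursion on the infinite tree, combined with Lemma~\ref{lem:path_tree}, should identify the real poles of $m_{\tilde i}$ with the zeros of $\alpha_i^G = \mu^G / \mu^{G \setminus i}$, giving the containment $X_\theta^G \subseteq 0_\theta^G$. The forest property of $G[X_\theta^G]$ and the boundary inequality $|\partial_G X_\theta^G| < \mathrm{cc}(G[X_\theta^G])$ then follow from Theorems~\ref{thm:multiplicity},~\ref{thm:matched_special}, and~\ref{thm:stability}, which describe the structure of $0_\theta^G$. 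A trace-type identity summing the residues of $m_{\tilde i}$ across $i \in V(G)$ should finally match $\tau(\theta)$ to the integer $\mathrm{cc}(G[X_\theta^G]) - |\partial_G X_\theta^G|$.

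The most delicate step is the spectral identification in part (ii). The trees $G^\uni$ and $T_i^G$ satisfy the same local continued-fraction recursion at the root but have genuinely different spectra — the former typically has a continuous component, while the latter is finite-dimensional — so matching their atomic masses at $\theta$ is not formal and requires selecting the correct branch of the fixed-point equation, likely using Herglotz--Nevanlinna theory on the upper half-plane to pick the physical solution. A secondary difficulty is confirming that the construction in part (i) yields genuinely $\ell^{2}$ eigenfunctions rather than merely formal ones; the required decay should follow from the refined Aomoto condition combined with the exponential growth of the universal cover, but making this precise requires care with the averaging procedure and with the fact that, for a naive single-copy lift of $G[S]$, each boundary vertex generally has multiple distinct lifts in $G^\uni$ each imposing its own cancellation constraint.
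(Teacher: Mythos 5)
First, a point of comparison: the paper does not prove this statement at all — Theorem~\ref{thm:the_aomoto_set} is quoted verbatim from Banks, Garza-Vargas, and Mukherjee (their Thm.~3.1), so your argument has to stand on its own, and as written it is a plan rather than a proof: every decisive step is flagged with ``should.'' The most serious concrete gap is in part (ii). You claim that, once $X_\theta^G\subseteq 0_\theta^G$ is known, the forest property of $G[X_\theta^G]$ and the inequality $\lvert\partial_G X_\theta^G\rvert<\mathrm{cc}(G[X_\theta^G])$ ``follow from Theorems~\ref{thm:multiplicity},~\ref{thm:matched_special}, and~\ref{thm:stability}, which describe the structure of $0_\theta^G$.'' They do not: nothing in the $\theta$--Gallai--Edmonds theory forces $G[0_\theta^G]$ to be a forest, and $\theta$--critical components routinely contain cycles — that is precisely why Theorem~\ref{thm:theta_critical_cycle} and the whole of Section~\ref{sec:proof_of_main_results} are needed. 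So even granting the containment $X_\theta^G\subseteq 0_\theta^G$, the forest property of $G[X_\theta^G]$ requires an argument genuinely about $\ell^2$-eigenfunctions of the infinite tree $G^\uni$, which is the actual content of the cited result and is absent from your sketch.

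The proposed bridge between the resolvent on $G^\uni$ and $\alpha_i^G$ is also unsound as stated. The path tree $T_i^G$ is finite while $G^\uni$ is infinite, and their atomic spectra genuinely differ: for a regular multi-graph, $0_\theta^G\neq\emptyset$ at any zero of $\mu^G$, yet $X_\theta^G=\emptyset$ by Theorem~\ref{thm:universal_cover_regular}. Hence ``real poles of $m_{\tilde i}$ $=$ zeros of $\alpha_i^G$'' is false; at best one containment could hold, and establishing it is exactly the Herglotz/boundary-value analysis you defer. Likewise in part (i), the two steps you leave as intentions — building genuinely $\ell^2$ eigenfunctions from the Hall-type condition (each lift of a boundary vertex imposes its own cancellation constraint, as you note), and converting ``eigenfunctions per fundamental domain'' into the value of $\tau(\theta)$, a trace/von Neumann-dimension computation — are the substance of the theorem, not routine verifications. (A small inconsistency besides: part (i) normalizes $\tau(\theta)$ by $\lvert V(G)\rvert$ and part (ii) does not.) In short, the outline points in a reasonable direction, but the steps it postpones are the theorem itself, and one of its claimed deductions (forest structure from $0_\theta^G$) would fail outright.
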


Moreover, Banks et al.~\cite{banks2022point} showed that $\tau(\theta)$ can be computed as the solution of a maximization problem over $\theta$--Aomoto subsets. For the next result, let $\mathcal{A}_\theta^G$ denote the set of $\theta$--Aomoto subsets of $G$.

\begin{theorem}[{\cite[Cor. 3.4 and Thm. 3.1]{banks2022point}}]\label{thm:density_of_states} Let $\theta$ be an eigenvalue of $G^\uni$. Then  
\[
\tau(\theta) = \max_{S \in \mathcal{A}_\theta^G} \frac{\mathrm{cc}(G[S]) - \lvert \partial_G S \rvert}{\lvert V(G) \rvert}.
\]  
Moreover, $X_\theta^G$ is a maximizer.
\end{theorem}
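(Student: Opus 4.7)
My plan is to reduce Theorem \ref{thm:density_of_states} to an upper bound, since Theorem \ref{thm:the_aomoto_set} already exhibits $X_\theta^G$ as a $\theta$--Aomoto subset realizing the value of $\tau(\theta)$. What remains is the inequality
\[
\frac{\mathrm{cc}(G[S]) - \lvert\partial_G S\rvert}{\lvert V(G)\rvert} \leq \tau(\theta) \qquad \text{for every } S \in \mathcal{A}_\theta^G,
\]
which then forces $X_\theta^G$ to be a maximizer.

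As a first step I would reduce to refined $\theta$--Aomoto subsets using Proposition \ref{prop:aomoto_implies_refined_aomoto}: any $S \in \mathcal{A}_\theta^G$ admits a refined $\tilde{S} \subseteq S$ with $\mathrm{cc}(G[\tilde{S}]) - \lvert\partial_G \tilde{S}\rvert \geq \mathrm{cc}(G[S]) - \lvert\partial_G S\rvert$. The extra structure of refined Aomoto subsets, namely that every component of $G[\tilde{S}]$ is $\theta$--critical and every $\emptyset \neq U \subseteq \partial_G \tilde{S}$ meets at least $\lvert U\rvert + 1$ components, is exactly what makes the eigenvector construction below work.

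Next I would produce $\theta$--eigenvectors of $A^{G^\uni}$ from $\tilde{S}$. Lifting $\tilde{S}$ along the covering map $\Pi_\uni$, the preimage $\Pi_\uni^{-1}(\tilde{S})$ is a disjoint union of isomorphic copies of the components of $G[\tilde{S}]$, because the tree structure of a forest lifts bijectively along any cover. Each such lifted component $T$ is $\theta$--critical, so by Gallai's Lemma (Theorem \ref{thm:gallais_lemma}) it carries a one-dimensional space of $\theta$--eigenvectors, spanned by some $g_T$ that is nonzero on every vertex of $T$. A trial eigenvector $f$ concentrated in one fundamental domain is then determined by a scalar $c_T$ for each lifted component. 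The equation $A^{G^\uni} f = \theta f$ holds automatically in the interior of the support; at each boundary vertex $j \in \partial_G \tilde{S}$ (where $f(j) = 0$) it reduces to one homogeneous linear constraint on the $c_T$. The Hall-type refined Aomoto condition, in direct analogy with Theorem \ref{thm:matched_special}, guarantees that the resulting boundary matrix has full row rank $\lvert\partial_G \tilde{S}\rvert$, so the kernel has dimension $\mathrm{cc}(G[\tilde{S}]) - \lvert\partial_G \tilde{S}\rvert$, producing that many linearly independent $\theta$--eigenvectors per fundamental domain.

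Finally I would convert this per-fundamental-domain kernel count into a lower bound on $\tau(\theta)$ via a von Neumann trace / normalized eigenvector-count argument: each of the $\lvert V(G)\rvert$ vertices of the base contributes a bounded amount of spectral weight at $\theta$, and the independent eigenvectors constructed above saturate the fraction $(\mathrm{cc}(G[\tilde{S}]) - \lvert\partial_G \tilde{S}\rvert)/\lvert V(G)\rvert$ of it. This final analytic step — translating a combinatorial kernel count on a single fundamental domain into a density-of-states inequality on the infinite cover $G^\uni$ — is the main obstacle I expect. The combinatorial core (refinement plus Hall) is clean, but rigorously extracting a measure-theoretic bound from compactly supported trial vectors typically requires either an approximation by finite covers, where the count becomes genuine linear-algebraic rank, or a direct trace computation respecting the deck-transformation action on $\ell^2(V(G^\uni))$.
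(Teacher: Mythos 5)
First, a point of reference: the paper does not prove this statement at all — it is quoted verbatim from Banks, Garza-Vargas, and Mukherjee \cite{banks2022point} (their Thm.~3.1 and Cor.~3.4) — so your proposal has to stand on its own, and its central counting step does not. Your plan is to take one lifted copy of each tree of $G[\tilde{S}]$ in $G^\uni$, use the nonvanishing $\theta$--eigenvector $g_T$ on each copy (that part is fine: Gallai's Lemma plus Theorem~\ref{thm:matching_equals_characteristic} do give a simple eigenvalue with everywhere-nonzero eigenvector), and then claim that the eigenvalue equation imposes only one constraint per vertex $j\in\partial_G\tilde{S}$, so the kernel has dimension $\mathrm{cc}(G[\tilde{S}])-\lvert\partial_G\tilde{S}\rvert$. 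This is where the argument fails: $G^\uni$ is a tree, so two distinct edges of $G$ joining $\tilde{S}$ to the same boundary vertex $j$ (whether they go to the same tree of $G[\tilde{S}]$ or to two different trees whose chosen lifts you have fixed) lift to edges ending at \emph{distinct} lifts of $j$ — otherwise they would close a cycle in $G^\uni$. Hence the number of homogeneous constraints on your coefficients $c_T$ is the number of lifts of boundary vertices adjacent to the support, which is controlled by the number of \emph{edges} between $\tilde{S}$ and $\partial_G\tilde{S}$, not by $\lvert\partial_G\tilde{S}\rvert$, and it can easily exceed the number of free parameters $\mathrm{cc}(G[\tilde{S}])$. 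The Hall-type condition of a refined Aomoto set does not rescue the one-fundamental-domain count; making compactly supported eigenfunctions exist requires a genuinely global choice of which lifts to superpose (or a different argument altogether), and this is precisely the hard content of Aomoto's and Banks et al.'s proofs, not a routine rank computation.

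The second step you flag as ``the main obstacle'' is also genuinely missing: even granting a supply of finitely supported $\theta$--eigenfunctions, passing from that to the quantitative bound $\tau(\theta)\geq(\mathrm{cc}(G[\tilde{S}])-\lvert\partial_G\tilde{S}\rvert)/\lvert V(G)\rvert$ cannot be done by counting eigenvectors ``per fundamental domain'': the deck group of $G^\uni$ is a free group, hence non-amenable, so there is no F\o lner-type exhaustion in which boundary effects are negligible, and one must argue via the von Neumann trace (as in the definition of $\tau$ through the spectral measures $\tau_i$) or via approximation by finite covers, where the atom at $\theta$ needs separate justification beyond weak convergence. So the proposal correctly identifies the skeleton (Theorem~\ref{thm:the_aomoto_set} gives the maximizer candidate, and one needs a lower bound on $\tau(\theta)$ from an arbitrary Aomoto set, for which reducing to a refined set via Proposition~\ref{prop:aomoto_implies_refined_aomoto} is harmless), but both the combinatorial kernel count and the spectral-density conversion — i.e.\ essentially all of the actual proof in \cite{banks2022point} — are either incorrect as stated or left open.
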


As observed in~\cite[Fig.~2]{banks2022point}, although $X_\theta^G$ is always a maximizer in Theorem~\ref{thm:density_of_states}, it may not be the unique one. We address this issue in Theorem~\ref{thm:density_of_states_refined}, which will be proved in Section~\ref{sec:proof_of_main_results}.

Theorems~\ref{thm:the_aomoto_set} and ~\ref{thm:density_of_states} should be compared with Theorem~\ref{thm:multiplicity} and~\cite[Thm. 2.1]{ku2010extensions}, respectively. A final, explicit connection between refined $\theta$--Aomoto subsets and $\theta$--critical components will be presented in Corollary~\ref{cor:refined_aomoto_in_gallai_edmonds}.

As stated in Section~\ref{sec:introduction}, Li et al.~\cite{li2025eigenvalues} recently obtained, in Theorem~\ref{thm:abelian_cover_eigenvalue}, a characterization of the eigenvalues of $G^\ab$ in terms of the base multi-graph $G$. To describe their results, for $\xi\in\mathbb{T}^{\vec{E}_+(G)}$, let $A^G_\xi$ be the Hermitian matrix given by  
\[
(A^G_\xi)_{ij}\defeq r_i \delta_{ij} + \sum_{\substack{e \in \vec{E}(G)\\ o(e)=i, t(e)=j}} \rho_e\xi_e, \qquad \text{for all } i,j \in V(G),
\]
\noindent where $\xi_e\defeq\overline{\xi_{e^{-1}}}$ for $e\in \vec{E}_-(G)$.

We define $\phi^G_\xi(x)$ as the characteristic polynomial of $A^G_\xi$.

\begin{theorem}[{\cite[Prop. 2.4]{li2025eigenvalues}}]\label{thm:abelian_cover_delta} A real number $\theta$ is an eigenvalue of $G^\ab$ if and only if $\theta$ is a zero of $\phi^G_\xi(x)$ for every $\xi\in\Tds^{\vec{E}_+(G)}$.
\end{theorem}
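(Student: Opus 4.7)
The plan is to diagonalize $A^{G^{\ab}}$ by a Bloch--Floquet (direct integral) decomposition using the free action of the deck group $\Lambda \defeq \mathbb{F}^{\ab}_{S_+} \cong \mathbb{Z}^{|S_+|}$, assuming for simplicity that $G$ is connected (the disconnected case reduces component-by-component).

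\emph{Step 1 (decomposition).} Since $V(G^{\ab}) = V(G) \times \Lambda$, there is a canonical isometry
\[
\ell^2(V(G^{\ab})) \;\cong\; \ell^2(V(G)) \otimes \ell^2(\Lambda) \;\xrightarrow{\;\mathrm{id}\otimes\mathcal{F}\;}\; L^2\bigl(\Tds^{S_+},\, \mathbb{C}^{V(G)}\bigr),
\]
where $\mathcal{F}$ is the Fourier transform on $\Lambda$ and $\Tds^{S_+}$ is its Pontryagin dual. Unpacking the definition of $G^{\ab}$ shows that under this unitary, $A^{G^{\ab}}$ becomes the decomposable multiplication operator $(Mf)(\xi) = A^G_{\xi}\, f(\xi)$, where $\xi$ is extended to $\vec{E}_+(G)$ by setting $\xi_e = 1$ for $e \in F$. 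Indeed, a positive arc $e$ sends $(o(e),g) \mapsto (t(e), g\cdot \phi_{\ab}(e))$; on the $\Lambda$-factor this is translation by $\phi_{\ab}(e)$, which Fourier-transforms to multiplication by the character $\xi \mapsto \xi_e$, matching the definition of $A^G_\xi$.

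\emph{Step 2 (forward direction via analytic dichotomy).} If $\theta$ is an eigenvalue of $M$, there exists a nonzero $f \in L^2$ with $A^G_\xi f(\xi) = \theta f(\xi)$ for almost every $\xi$, which forces the set $B_\theta \defeq \{\xi \in \Tds^{S_+} : \phi^G_\xi(\theta) = 0\}$ to have positive Haar measure. But $\xi \mapsto \phi^G_\xi(\theta)$ is a trigonometric polynomial, hence real-analytic on the connected torus $\Tds^{S_+}$, so $B_\theta$ either has measure zero or equals all of $\Tds^{S_+}$. Thus $\phi^G_\xi(\theta) \equiv 0$ on $\Tds^{S_+}$; the full statement over $\Tds^{\vec{E}_+(G)}$ then follows by gauge invariance, since conjugating $A^G_\xi$ by a diagonal unitary shifts $\xi$ by a coboundary without changing $\phi^G_\xi$, and the coboundary can be chosen to trivialize $\xi$ on the spanning forest $F$.

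\emph{Step 3 (converse: producing the eigenvector).} Assume $\phi^G_\xi(\theta) \equiv 0$ and let $P(\xi)$ denote the orthogonal projection onto $\ker(A^G_\xi - \theta I)$, a nonzero Hermitian projection for every $\xi$. On any open set where $\theta$ stays isolated from the other eigenvalues of $A^G_\xi$, Riesz's formula $P(\xi) = \frac{1}{2\pi i}\oint_\Gamma (z - A^G_\xi)^{-1}\, dz$ makes $P$ norm-continuous; by continuity of the eigenvalues of $A^G_\xi$ in $\xi$ and separability of $\Tds^{S_+}$, countably many such open sets cover the torus, giving Borel measurability of $P$ globally. Fixing an orthonormal basis $e_1,\dots,e_n$ of $\mathbb{C}^{V(G)}$, the identity $\sum_k \lVert P(\xi) e_k\rVert^2 = \operatorname{tr} P(\xi) \geq 1$ forces $P(\xi) e_k \neq 0$ on a positive-measure set for some $k$, and $f(\xi) \defeq P(\xi) e_k$ is then a bounded, nonzero, measurable eigensection of $M$ with eigenvalue $\theta$, hence an honest eigenvector of $A^{G^{\ab}}$.

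The main obstacle is the measurable selection of the eigenprojection in Step 3, since $P(\xi)$ can be discontinuous where $\theta$ collides with other eigenvalues of $A^G_\xi$; the Riesz-contour covering argument above is the standard Floquet-theoretic workaround, and the remaining steps invoke only real-analyticity of the characteristic polynomial and routine direct integral theory.
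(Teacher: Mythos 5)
First, a point of comparison: the paper does not prove Theorem~\ref{thm:abelian_cover_delta} at all — it is imported verbatim from Li--Magee--Sabri--Thomas, so your proposal can only be measured against that external proof. Your route is the standard Floquet/Bloch one and, in substance, the same as the cited source: Steps 1 and 2 are fine, including the identification of $A^{G^\ab}$ with multiplication by $A^G_\xi$ on $L^2(\Tds^{S_+},\Cds^{V(G)})$, the measure-zero-or-everything dichotomy for the zero set of the real-analytic function $\xi\mapsto\phi^G_\xi(\theta)$, and the gauge (diagonal unitary) reduction showing that characters trivial on the spanning forest $F$ suffice, so that vanishing on $\Tds^{S_+}$ is equivalent to vanishing on all of $\Tds^{\vec{E}_+(G)}$.

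The flaw is the measurability argument in Step 3. The open sets ``where $\theta$ stays isolated from the other eigenvalues of $A^G_\xi$'' (with the uniform spectral gap needed to run a fixed Riesz contour) do \emph{not} cover the torus: at a parameter $\xi_0$ where some eigenvalue branch $\lambda(\xi)\neq\theta$ tends to $\theta$, i.e.\ where $m(\xi)\defeq\dim\ker(A^G_\xi-\theta I)$ jumps, every neighborhood of $\xi_0$ contains points whose spectrum meets $(\theta-\delta,\theta+\delta)\setminus\{\theta\}$ for every $\delta>0$, so no such open set contains $\xi_0$; and $P$ is genuinely discontinuous at such points, so no continuity-based covering argument can work there. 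Fortunately the gap is routine to repair, in either of two ways. (i) Note that $m(\xi)$ is upper semicontinuous and that the locus where it exceeds its minimum value is a proper real-analytic subvariety of the connected torus, hence closed and of Haar measure zero; on its open, full-measure complement the multiplicity is constant, your Riesz-contour formula does give norm-continuity of $P$ there, and an eigensection defined on a full-measure set is all you need (set $f=0$ on the exceptional set). (ii) Alternatively, obtain Borel measurability of $\xi\mapsto P(\xi)$ on the whole torus by writing $P(\xi)=\lim_{n}g_n(A^G_\xi)$, where $g_n(t)=\max(0,1-n\lvert t-\theta\rvert)$: each map $\xi\mapsto g_n(A^G_\xi)$ is norm-continuous and the limit is pointwise. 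With either repair, the trace argument produces a bounded nonzero measurable eigensection $f(\xi)=P(\xi)e_k$, and the converse — hence the proof — is complete.
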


With this characterization of the eigenvalues of $G^\ab$, Li et al.~\cite{li2025eigenvalues} further proved the following result.

\begin{theorem}[{\cite[Sec. 3.2]{li2025eigenvalues}}]\label{thm:abelian_cover_equivalences} For a real number $\theta$, the following are equivalent:  
\begin{enumerate}[(a)]
    \item $\theta$ is a zero of $\phi^G_\xi(x)$ for every $\xi\in\mathbb{T}^{\vec{E}(G)}$;  
    \item $\theta$ is a zero of $\mu^{G\setminus\Gamma}(x)$ for every 2-regular subgraph $\Gamma$ of $G$;  
    \item $\theta$ is a zero of $\phi^{G\setminus\Gamma}(x)$ for every 2-regular subgraph $\Gamma$ of $G$.  
\end{enumerate}  
\end{theorem}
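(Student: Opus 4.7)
The plan is to derive from the Leibniz expansion of $\det(xI - A^G_\xi)$ an identity of the form
\[
\phi^G_\xi(x)=\sum_{\Gamma\text{ $2$-regular in }G} W_\xi(\Gamma)\,\mu^{G\setminus\Gamma}(x),
\]
summed over all $2$--regular subgraphs $\Gamma$ of $G$ (including $\Gamma=\emptyset$, for which $W_\xi(\emptyset)=1$). To obtain it, I would write each diagonal entry $(xI-A^G_\xi)_{ii}$ as $(x-r_i)-\sum_{e\text{ loop at }i}\rho_e\xi_e$, expand each off-diagonal entry as a sum over arcs, and fully expand the Leibniz determinant. Each resulting term corresponds to a permutation $\sigma$ equipped with an arc choice at each non-fixed-point and a choice of either $(x-r_i)$ or a specific loop at each fixed point. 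I would then classify the cycles into two types: a $2$--cycle whose two chosen arcs are mutually inverse is a \emph{matching edge}, while every other cycle---namely a fixed point using a loop, a $2$--cycle using two distinct parallel edges, or a cycle of length $\ge 3$---is \emph{structural}. The structural cycles together form a $2$--regular subgraph $\Gamma$ of $G$, and summing the contributions of the matching edges over all matchings of $G\setminus\Gamma$ reassembles $\mu^{G\setminus\Gamma}(x)$, leaving the structural weight $W_\xi(\Gamma)$ as a Laurent polynomial in $\{\xi_e\}_{e\in\vec{E}_+(G)}$ whose monomials involve exactly the edges of $\Gamma$.

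With this identity in hand, (b) $\Rightarrow$ (a) is immediate. For (a) $\Rightarrow$ (b), I would argue that distinct $2$--regular subgraphs $\Gamma$ produce $W_\xi(\Gamma)$ with pairwise disjoint monomial supports: every monomial in $W_\xi(\Gamma)$ carries a nonzero exponent on each $\xi_e$ with $e \in E(\Gamma)$ and a zero exponent on all other $\xi_e$. Hence the identity $\sum_\Gamma W_\xi(\Gamma)\mu^{G\setminus\Gamma}(\theta)\equiv 0$ on $\mathbb{T}^{\vec{E}_+(G)}$ forces each $W_\xi(\Gamma)\mu^{G\setminus\Gamma}(\theta)$ to vanish identically as a function of $\xi$; since each $W_\xi(\Gamma)$ is a nonzero Laurent polynomial, this yields $\mu^{G\setminus\Gamma}(\theta)=0$ for every $\Gamma$.

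For the equivalence (b) $\Leftrightarrow$ (c), I would specialize the identity to $\xi=1$, obtaining the relation $\phi^H(x)=\sum_{\Gamma'}W_1(\Gamma')\mu^{H\setminus\Gamma'}(x)$ for any multi-graph $H$. The forward direction (b) $\Rightarrow$ (c) then follows by applying this with $H=G\setminus\Gamma$ and observing that the disjoint union $\Gamma\sqcup\Gamma'$ is a $2$--regular subgraph of $G$, so every summand vanishes at $\theta$ by (b). The reverse direction (c) $\Rightarrow$ (b) requires inverting the identity: since $W_1(\emptyset)=1$, I would rewrite it as $\mu^H(x)=\phi^H(x)-\sum_{\Gamma'\neq\emptyset}W_1(\Gamma')\mu^{H\setminus\Gamma'}(x)$ and solve recursively by induction on $|V(H)|$, expressing $\mu^H(x)$ as a (signed integer) linear combination of polynomials $\phi^{H\setminus\Gamma''}(x)$; then each such characteristic polynomial vanishes at $\theta$ by (c).

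I expect the main technical obstacle to be the combinatorial derivation of the expansion in the first paragraph, in particular the careful bookkeeping of the three sources of signs---from $\mathrm{sgn}(\sigma)$, from each $-(A^G_\xi)_{ij}$ factor, and from combining the two orientations of every structural cycle of length $\ge 2$---and the precise verification that the monomial supports of $W_\xi(\Gamma)$ are genuinely disjoint across distinct $\Gamma$, so that the Fourier step in the second paragraph extracts each $\mu^{G\setminus\Gamma}(\theta)=0$ cleanly.
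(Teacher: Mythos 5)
Your proposal is correct, and it is in fact more self-contained than what the paper does: the paper does not prove this theorem at all but quotes it from Li--Magee--Sabri--Thomas (Sec.~3.2), and its own machinery only covers one direction. Concretely, the expansion you derive from the Leibniz formula is exactly the paper's molecular polynomial identity: with the cycle weights $\lambda_C=-\prod_{e\in\vec{E}(C)}\rho_e\tilde{\xi}_e$, Harary's formula gives $M^G(x)=\phi^G_\xi(x)$, and Gutman's expansion (Theorem~\ref{thm:molecular_to_matching}) is precisely your $\phi^G_\xi(x)=\sum_\Gamma W_\xi(\Gamma)\,\mu^{G\setminus\Gamma}(x)$, so the sign bookkeeping you flag as the main obstacle can be replaced by citing those two results. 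The paper only exploits this for the easy implication (b) $\Rightarrow$ (a) (via Corollary~\ref{cor:matching_implies_molecular} and Lemma~\ref{lem:molecular_implies_delta}); your disjoint-monomial-support argument for (a) $\Rightarrow$ (b) and the triangular inversion (using $W_1(\emptyset)=1$ and induction on $\lvert V(H)\rvert$) for (c) $\Rightarrow$ (b) supply the remaining directions that the paper delegates to the reference, and both are sound: vertex-disjoint cycles are edge-disjoint, a $2$-regular subgraph is determined by its edge set, so every $\xi$-monomial occurring in $W_\xi(\Gamma)$ has exponent $\pm 1$ exactly on $E(\Gamma)$ and identifies $\Gamma$ uniquely, and no cancellation occurs within $W_\xi(\Gamma)$ since distinct orientation choices give distinct exponent vectors with nonzero coefficients (all $\rho_e\neq 0$). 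Two small points to tighten: you should state explicitly that a Laurent polynomial vanishing on all of $\mathbb{T}^{\vec{E}_+(G)}$ is identically zero (character orthogonality), which is what lets you read off each coefficient $\mu^{G\setminus\Gamma}(\theta)=0$; and in the weighted setting the coefficients $W_1(\Gamma)$ are real numbers such as $-2\,\mathrm{Re}\prod_e\rho_e$, not integers, though this does not affect the inversion argument.
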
  

As a consequence of Theorems~\ref{thm:abelian_cover_delta} and~\ref{thm:abelian_cover_equivalences}, Li et al.~\cite{li2025eigenvalues} obtained Theorem~\ref{thm:abelian_cover_eigenvalue}. In the course of establishing these results, they also proved the following theorem, which, together with Theorems~\ref{thm:universal_cover_eigenvalue} and~\ref{thm:abelian_cover_eigenvalue}, implies Theorem~\ref{thm:universal_cover_implies_abelian_cover}.

\begin{theorem}[{\cite[Prop. A.2]{li2025eigenvalues}}]\label{thm:aomoto_implies_matching} Assume that $G$ has a $\theta$--Aomoto subset. Then, $\theta$ is a zero of $\mu^{G\setminus\Gamma}(x)$ for every $2$-regular subgraph $\Gamma$.
\end{theorem}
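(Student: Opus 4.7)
The plan is to establish Theorem~\ref{thm:aomoto_implies_matching} by first showing the stronger statement that $\theta$ is a zero of $\phi^G_\xi(x)$ for every $\xi \in \mathbb{T}^{\vec{E}_+(G)}$, and then invoking the implication (a)$\Rightarrow$(b) of Theorem~\ref{thm:abelian_cover_equivalences}. The task therefore reduces to constructing, for each $\xi$, a nonzero vector $v \in \mathbb{C}^{V(G)}$ with $A^G_\xi v = \theta v$.

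First I would apply Proposition~\ref{prop:aomoto_implies_refined_aomoto} to pass to a refined $\theta$-Aomoto subset $\tilde{S}$, with $\theta$-critical tree components $T_1,\ldots,T_k$ and boundary $U = \partial_G \tilde{S}$ satisfying the strict Hall-type condition; in particular $k > |U|$. Fix $\xi$. Because each $T_i$ is a tree, choosing unit-modulus scalars recursively along $T_i$ yields a diagonal unitary $D_i$ with $A^{T_i}_\xi = D_i^{-1} A^{T_i} D_i$, so that $\phi^{T_i}_\xi(x) = \phi^{T_i}(x) = \mu^{T_i}(x)$ by Theorem~\ref{thm:matching_equals_characteristic}. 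By Gallai's Lemma (Theorem~\ref{thm:gallais_lemma}), $m_\theta(T_i) = 1$, so $A^{T_i}_\xi$ admits a nonzero $\theta$-eigenvector $v_i \in \mathbb{C}^{V(T_i)}$, unique up to scalar.

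Next I would assemble $v \in \mathbb{C}^{V(G)}$ of the form $v|_{T_i} = c_i v_i$ and $v|_{V(G) \setminus \tilde{S}} = 0$, with $(c_1, \ldots, c_k) \in \mathbb{C}^k$ to be determined. The eigenvalue equation $(A^G_\xi v)_j = \theta v_j$ holds automatically away from $U$: for $j \in T_i$, the only $\tilde{S}$-neighbors of $j$ in $G$ lie in $T_i$ (since $T_i$ is a connected component of $G[\tilde{S}]$), and $v$ vanishes on $\partial_G T_i \subseteq U$, so the equation reduces to the local eigenvector relation on $T_i$; for $j \in V(G) \setminus (\tilde{S} \cup U)$, both sides vanish trivially. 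The remaining $|U|$ equations at vertices of $U$ form a homogeneous linear system in the $k$ unknowns $c_i$, which admits a nontrivial solution because $k > |U|$, producing the desired nonzero $v$.

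Applying Theorem~\ref{thm:abelian_cover_equivalences} then yields $\mu^{G \setminus \Gamma}(\theta) = 0$ for every $2$-regular subgraph $\Gamma$. The main subtlety is the decoupling argument in the previous paragraph: the eigenvalue equation genuinely has only $|U|$ non-trivial constraints because distinct $T_i$ are non-adjacent in $G$ and no vertex outside $\tilde{S} \cup U$ has a neighbor in $\tilde{S}$. The strict Hall-type inequality $k > |U|$, as opposed to merely $k \geq |U|$, is exactly what supplies the excess dimension required to produce a nonzero $v$.
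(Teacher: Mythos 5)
Your argument is correct, but it is a genuinely different route from the one this paper takes. The paper's own proof stays entirely inside the Gallai--Edmonds machinery for the matching polynomial: given a $2$-regular subgraph $\Gamma$, it deletes its cycles one at a time, using Proposition~\ref{prop:robust_refined_aomoto} to see that each deletion preserves the existence of a $\theta$--Aomoto subset, and then Proposition~\ref{prop:aomoto_implies_refined_aomoto} together with Corollary~\ref{cor:refined_aomoto_in_gallai_edmonds} to conclude that $G\setminus\Gamma$ has nonempty $0^{G\setminus\Gamma}_\theta$, hence $\mu^{G\setminus\Gamma}(\theta)=0$. You instead prove the stronger statement that $\theta$ is an eigenvalue of $A^G_\xi$ for every $\xi$, by a direct eigenvector construction: gauge away $\xi$ on each tree component (legitimate, since $G[\tilde S]$ is an induced forest, so the restriction of $A^G_\xi$ to a component really is $A^{T_i}_\xi$ with no loops or parallel edges interfering), and use the surplus $\mathrm{cc}(G[\tilde S])-\lvert\partial_G\tilde S\rvert>0$ to solve the $\lvert U\rvert$ homogeneous boundary constraints nontrivially; then you convert to the matching-polynomial statement via the implication (a)$\Rightarrow$(b) of Theorem~\ref{thm:abelian_cover_equivalences}. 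This is sound and not circular, since that equivalence of Li et al.\ does not rely on their Prop.~A.2; in fact your construction is cleaner than you make it, as a plain $\theta$--Aomoto subset already suffices (its components are trees with $\theta$ as an eigenvalue and $\mathrm{cc}>\lvert\partial\rvert$), so the passage to a refined subset, Gallai's Lemma, and uniqueness of $v_i$ are all unnecessary. What the two approaches buy: yours is elementary linear algebra and directly exhibits $(c)\Rightarrow(g)$, i.e.\ that an Aomoto subset forces $\theta$ into the spectrum of every twisted matrix $A^G_\xi$ (hence of $G^\ab$ via Theorem~\ref{thm:abelian_cover_delta}); but it outsources the conversion to matching polynomials to Li et al.'s expansion argument, so it is not an independent proof of $(c)\Rightarrow(e)$ in the sense this paper intends, whereas the paper's proof is self-contained in its own framework and, more importantly, develops exactly the tools (Propositions~\ref{prop:subset_in_gallai_edmonds} and~\ref{prop:robust_refined_aomoto}, Corollary~\ref{cor:refined_aomoto_in_gallai_edmonds}) that are later needed for the much harder converse, Theorem~\ref{thm:main_theorem_technical}.
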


Part of this result was claimed without proof by the author in~\cite[p. 33]{spier2021graph}. Indeed, it was stated there that if $G$ is a graph, then every eigenvalue of $G^\uni$ is a zero of $\phi^G(x)$, $\mu^G(x)$, and, more generally, of every molecular polynomial associated with $G$. In the next section, we make this statement precise in Corollary~\ref{cor:aomoto_implies_molecular}.

While not stated in~\cite{spier2021graph}, the author had in fact established Theorem~\ref{thm:aomoto_implies_matching} for graphs in the course of proving Corollary~\ref{cor:aomoto_implies_molecular}. This alternative proof differs from that in~\cite{li2025eigenvalues} and follows from the results stated in Section~\ref{sec:new_gallai_edmonds}.


\subsection{Molecular polynomial}\

We introduce \emph{molecular polynomials}, which generalize both the characteristic and matching polynomials. These were first defined as $\mu$-polynomials by Gutman and Polansky~\cite{gutman1981cyclic, gutman1986some}. Since the notation $\mu$-polynomial may lead to confusion with the matching polynomial, we adopt the term \emph{molecular polynomial}.

In this section, for a multi-graph $G$ as defined at the beginning of Section~\ref{sec:preliminaries}, we assign weights $\lambda_C \in \Cds$ to each of its directed cycles $C$. By a directed cycle of $G$, we mean a sequence of arcs $e_1, \dots, e_k$ such that $t(e_j) = o(e_{j+1})$ for every $j \in [k-1]$ and $t(e_k) = o(e_1)$. Directed cycles of $G$ therefore correspond to undirected cycles of $G$ with a chosen orientation, as well as to pairs of distinct arcs in opposite directions, which form directed cycles of length $2$. We refer to this latter type of directed cycle as being associated with edges of $G$.

For each directed cycle associated with an edge $e \in E(G)$ that is not a loop, we define $\lambda_e\defeq -\lvert\rho_e\rvert^2$, in accordance with the definition given in Section~\ref{sec:matching_pol}. For all remaining directed cycles of $G$, the weights are chosen arbitrarily in $\Cds$, subject only to the condition $\lambda_{C^{-1}} = \overline{\lambda_C}$, where $C^{-1}$ denotes the directed cycle $C$ with the reverse direction.

We denote by $\mathcal{P}_G$ the collection of subgraphs of $G$ that are disjoint unions of directed cycles of $G$. For $\Gamma \in \mathcal{P}_G$, we write $C \subseteq \Gamma$ when the directed cycle $C$ is a subgraph of $\Gamma$. The \emph{molecular polynomial} of $G$ is defined by  
\[
M^G(x)\defeq\sum_{\Gamma\in\mathcal{P}_G}\prod_{i \notin V(\Gamma)}(x-r_i)\prod_{C\subseteq\Gamma}\lambda_C.
\]

This polynomial generalizes both the matching and characteristic polynomials. Indeed, if we set $\lambda_C=0$ for every directed cycle which is not associated with an edge of $G$, then $M^G(x)$ coincides with $\mu^G(x)$. On the other hand, if we set $\lambda_C$ to be the product $-\prod_{e\in C}\rho_e$ for every directed cycle $C$, then by Harary's formula~\cite{harary1962determinant}, $M^G(x)$ coincides with $\phi^G(x)$. Finally, given $\xi\in\mathbb{T}^{\vec{E}_+(G)}$, as in Section~\ref{sec:covers}, we extend it to $\tilde{\xi}\in\mathbb{T}^{\vec{E}(G)}$ by setting $\tilde{\xi}_e =\xi_e$ if $e\in\vec{E}_+(G)$ and $\tilde{\xi}_e =\overline{\xi_{e^{-1}}}$ if $e\in\vec{E}_-(G)$. For a directed cycle $C$, we then define $\lambda_C = -\prod_{e\in\vec{E}(C)}\rho_e\tilde{\xi}_e$. With this choice of weights, $M^G(x)$ is then equal to $\phi^G_\xi(x)$.

Given a multi-graph $G$ as defined at the start of Section~\ref{sec:preliminaries}, a \emph{molecular polynomial associated with $G$} is defined as any polynomial that can be obtained as $M^G(x)$ for some choice of weights for the directed cycles of $G$ that are not associated with edges of $G$. Note that if $G$ is a forest, then the only molecular polynomial associated with $G$ is $\mu^G =\phi^G$, in accordance with Theorem~\ref{thm:matching_equals_characteristic}.

The following lemma is an immediate consequence of the observations above.

\begin{lemma}\label{lem:molecular_implies_delta} 
If $\theta$ is a zero of every molecular polynomial $M^G(x)$ associated with $G$, then $\theta$ is a zero of $\phi^G_\xi(x)$ for every $\xi\in\Tds^{\vec{E}(G)}$.
\end{lemma}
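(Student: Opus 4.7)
The plan is essentially immediate from the paragraph just preceding the lemma. The goal is to exhibit, for each $\xi\in\Tds^{\vec{E}(G)}$ (with $\xi_{e^{-1}}=\overline{\xi_e}$, as required for Hermiticity of $A^G_\xi$), the polynomial $\phi^G_\xi(x)$ as one of the molecular polynomials associated with $G$; the conclusion then follows directly from the hypothesis.

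Concretely, for each directed cycle $C$ of $G$ set $\lambda_C\defeq -\prod_{e\in\vec{E}(C)}\rho_e\xi_e$. I would verify two things. First, this assignment is admissible as cycle weights for a molecular polynomial. For a length-$2$ directed cycle associated with a non-loop edge $e$, the arcs traversed are $e$ and $e^{-1}$, so $\lambda_C=-\rho_e\xi_e\,\rho_{e^{-1}}\xi_{e^{-1}}=-|\rho_e|^2|\xi_e|^2=-|\rho_e|^2$ because $|\xi_e|=1$, matching the value fixed in the definition of molecular polynomial. The conjugate symmetry $\lambda_{C^{-1}}=\overline{\lambda_C}$ holds for every directed cycle because the arc set of $C^{-1}$ is the reverse of that of $C$, together with $\rho_{e^{-1}}=\overline{\rho_e}$ and $\xi_{e^{-1}}=\overline{\xi_e}$. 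Second, with this choice of weights the standard Harary-type expansion of $\det(xI-A^G_\xi)$ gives exactly $M^G(x)=\phi^G_\xi(x)$; this is the identification already stated in the excerpt. Combining these two points, $\phi^G_\xi(x)$ is a molecular polynomial associated with $G$, so any $\theta$ that is a zero of every such polynomial is, in particular, a zero of $\phi^G_\xi(x)$.

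There is no genuine obstacle: the lemma is essentially a repackaging of an observation already made in the text. The only detail worth checking is the treatment of length-$1$ directed cycles coming from loops, which are allowed under the definition; for a loop arc $e$ one has $\lambda_{C^{-1}}=-\rho_{e^{-1}}\xi_{e^{-1}}=\overline{-\rho_e\xi_e}=\overline{\lambda_C}$, so the symmetry condition is preserved and the argument goes through uniformly.
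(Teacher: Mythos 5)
Your proposal is correct and is exactly the paper's argument: the paragraph preceding the lemma already identifies $\phi^G_\xi(x)$ as the molecular polynomial obtained by setting $\lambda_C=-\prod_{e\in\vec{E}(C)}\rho_e\tilde{\xi}_e$, and the paper declares the lemma an immediate consequence of that observation. Your extra checks (the length-$2$ cycles giving $-\lvert\rho_e\rvert^2$, conjugate symmetry, and the loop case) just make explicit what the paper leaves implicit.
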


The following result was obtained by Gutman~\cite{gutman1986some}. For the next statement, let $\tilde{\mathcal{P}}_G \subseteq \mathcal{P}_G$ denote the set of subgraphs of $G$ that are disjoint unions of directed cycles of $G$, none of which is associated with an edge of $G$.

\begin{theorem}[{\cite[Prop. 1a]{gutman1986some}}]\label{thm:molecular_to_matching} For every multi-graph $G$,  
\[
M^G(x) =\sum_{\tilde{\Gamma}\in\tilde{\mathcal{P}}_G}\mu^{G\setminus\tilde{\Gamma}}(x)\prod_{C\in\tilde{\Gamma}}\lambda_C.
\] 
\end{theorem}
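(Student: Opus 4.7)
The plan is to prove Theorem~\ref{thm:molecular_to_matching} by a direct re-indexing of the sum defining $M^G(x)$, partitioning each $\Gamma \in \mathcal{P}_G$ according to which of its cycle components are associated with edges of $G$.

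First, I would observe that every $\Gamma\in\mathcal{P}_G$ admits a unique decomposition $\Gamma=\tilde{\Gamma}\sqcup\Gamma_2$, where $\Gamma_2$ is the union of those directed cycle components of $\Gamma$ that are associated with edges of $G$ (that is, length-2 directed cycles of the form $(\vec{e},\vec{e}^{-1})$ for some $e\in E(G)$) and $\tilde{\Gamma}$ is the union of the remaining cycle components. By definition $\tilde{\Gamma}\in\tilde{\mathcal{P}}_G$, and the set $M\subseteq E(G)$ of edges whose associated length-2 directed cycle lies in $\Gamma_2$ is a matching of $G\setminus\tilde{\Gamma}$, because vertex-disjointness of the cycle components of $\Gamma$ forces the edges of $M$ to be pairwise vertex-disjoint and disjoint from $V(\tilde{\Gamma})$.

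Next, I would argue that the assignment $\Gamma\mapsto(\tilde{\Gamma},M)$ is a bijection between $\mathcal{P}_G$ and the set of pairs $(\tilde{\Gamma},M)$ with $\tilde{\Gamma}\in\tilde{\mathcal{P}}_G$ and $M\in\mathcal{M}_{G\setminus\tilde{\Gamma}}$, with inverse sending $(\tilde{\Gamma},M)$ to the disjoint union of $\tilde{\Gamma}$ with the length-2 directed cycles $(\vec{e},\vec{e}^{-1})$ for $e\in M$. Under this bijection, the weight $\lambda_C$ of the length-2 directed cycle associated with an edge $e$ equals $-\lvert\rho_e\rvert^2=\lambda_e$, and the vertex set of $\Gamma$ is $V(\tilde{\Gamma})\sqcup V(M)$. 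It then follows that
\[
\prod_{i\notin V(\Gamma)}(x-r_i)\prod_{C\subseteq\Gamma}\lambda_C=\Bigl(\prod_{C\in\tilde{\Gamma}}\lambda_C\Bigr)\Bigl(\prod_{i\notin V(\tilde{\Gamma})\cup V(M)}(x-r_i)\prod_{e\in M}\lambda_e\Bigr),
\]
and summing over $\Gamma\in\mathcal{P}_G$ — equivalently, first over $\tilde{\Gamma}\in\tilde{\mathcal{P}}_G$ and then over $M\in\mathcal{M}_{G\setminus\tilde{\Gamma}}$ — collapses the inner matching sum to $\mu^{G\setminus\tilde{\Gamma}}(x)$ by the definition recalled in Section~\ref{sec:matching_pol}, yielding the claimed identity.

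The only real subtlety is bookkeeping around the conventions for directed cycles of length at most $2$: one must carefully distinguish the length-2 directed cycle $(\vec{e},\vec{e}^{-1})$ coming from a single edge $e$, which is associated with an edge and corresponds to $e\in M$, from length-2 directed cycles arising from a pair of distinct parallel edges, which are not associated with a single edge and therefore contribute to $\tilde{\mathcal{P}}_G$; loops (length-1 directed cycles) likewise contribute only to $\tilde{\Gamma}$, consistently with the fact that loops never appear in matchings. Once this classification is made explicit, the proof reduces to the straightforward re-grouping described above, and no further analytic input is required.
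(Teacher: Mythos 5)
Your argument is correct: the bijection $\Gamma\mapsto(\tilde{\Gamma},M)$ between $\mathcal{P}_G$ and pairs consisting of $\tilde{\Gamma}\in\tilde{\mathcal{P}}_G$ and a matching $M\in\mathcal{M}_{G\setminus\tilde{\Gamma}}$, together with the factorization of the weight of each term and the bookkeeping that edge-associated $2$-cycles carry weight $\lambda_e=-\lvert\rho_e\rvert^2$ while loops and parallel-edge $2$-cycles belong to $\tilde{\Gamma}$, yields exactly the claimed identity after summing the inner matching sum to $\mu^{G\setminus\tilde{\Gamma}}(x)$. The paper gives no proof of this statement (it is quoted from Gutman's work), so there is nothing to diverge from; your re-indexing is the standard and essentially the only natural argument.
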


As a corollary, we obtain the following result.

\begin{corollary}\label{cor:matching_implies_molecular} If $\theta$ is a zero of $\mu^{G\setminus\Gamma}(x)$ for every $2$-regular subgraph $\Gamma$ of $G$, then $\theta$ is a zero of every molecular polynomial $M^G(x)$ associated with $G$.
\end{corollary}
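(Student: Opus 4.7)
The plan is to apply Theorem~\ref{thm:molecular_to_matching} directly. This gives
\[
M^G(x) = \sum_{\tilde{\Gamma}\in\tilde{\mathcal{P}}_G}\mu^{G\setminus\tilde{\Gamma}}(x)\prod_{C\subseteq\tilde{\Gamma}}\lambda_C,
\]
so it suffices to show that $\mu^{G\setminus\tilde{\Gamma}}(\theta)=0$ for every $\tilde{\Gamma}\in\tilde{\mathcal{P}}_G$. Since $G\setminus\tilde{\Gamma}$ is defined as $G\setminus V(\tilde{\Gamma})$, this quantity depends only on the underlying undirected subgraph of $\tilde{\Gamma}$.

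The key observation I would make is that the underlying undirected subgraph of any $\tilde{\Gamma}\in\tilde{\mathcal{P}}_G$ is a $2$-regular subgraph of $G$. Indeed, $\tilde{\Gamma}$ is a disjoint union of directed cycles of $G$, none of which is associated with an edge of $G$. Dropping the orientations turns each length-$1$ directed cycle into a loop of $G$, each length-$2$ directed cycle (necessarily formed from two distinct parallel edges, since we exclude the ones associated with a single edge) into a cycle of length $2$ in $G$, and each longer directed cycle into an ordinary cycle of $G$. These components are vertex-disjoint, so the underlying subgraph $\Gamma$ is $2$-regular in the sense of Section~\ref{sec:preliminaries} (including the possibility $\Gamma=\emptyset$). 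By hypothesis $\mu^{G\setminus\Gamma}(\theta)=0$, and since $\mu^{G\setminus\tilde{\Gamma}}(x)=\mu^{G\setminus\Gamma}(x)$, every term in the sum above vanishes at $\theta$. Hence $M^G(\theta)=0$, as required. There is no substantive obstacle here once Theorem~\ref{thm:molecular_to_matching} is in hand; the proof is essentially the verification that directed cycle decompositions excluded by $\tilde{\mathcal{P}}_G$ project onto precisely the $2$-regular subgraphs of $G$.
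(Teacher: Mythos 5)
Your proof is correct and is exactly the argument the paper intends: the corollary is stated as an immediate consequence of Theorem~\ref{thm:molecular_to_matching}, with the (implicit) observation that the underlying undirected subgraph of each $\tilde{\Gamma}\in\tilde{\mathcal{P}}_G$ is a $2$-regular subgraph of $G$, so every term $\mu^{G\setminus\tilde{\Gamma}}(\theta)$ vanishes by hypothesis. Your verification that the excluded length-$2$ directed cycles are precisely those coming from a single edge, so the remaining ones project to genuine $2$-cycles (parallel edges) or loops, is the right bookkeeping and matches the paper's conventions.
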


Corollary~\ref{cor:matching_implies_molecular} together with Theorem~\ref{thm:aomoto_implies_matching} implies the following result mentioned in Section~\ref{sec:covers}.

\begin{corollary}\label{cor:aomoto_implies_molecular} Assume that $G$ has a $\theta$--Aomoto subset. Then $\theta$ is a zero of every molecular polynomial $M^G(x)$ associated with $G$.
\end{corollary}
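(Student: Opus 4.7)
The plan is straightforward: chain the two immediately preceding results. Theorem~\ref{thm:aomoto_implies_matching} and Corollary~\ref{cor:matching_implies_molecular} together deliver the statement, so no additional ingredients are required; the corollary is merely the composition of these two implications.

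More explicitly, I would first invoke Theorem~\ref{thm:aomoto_implies_matching} on the hypothesis that $G$ admits a $\theta$--Aomoto subset to conclude that $\theta$ is a zero of $\mu^{G\setminus\Gamma}(x)$ for every $2$-regular subgraph $\Gamma$ of $G$. I would then feed this conclusion into Corollary~\ref{cor:matching_implies_molecular} to upgrade it: $\theta$ is a zero of every molecular polynomial $M^G(x)$ associated with $G$. Putting the two steps back-to-back yields the desired statement.

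There is no genuine obstacle here, since the substantive content lives in the cited results. The nontrivial half is Theorem~\ref{thm:aomoto_implies_matching} (due to Li et al.~\cite{li2025eigenvalues}), while Corollary~\ref{cor:matching_implies_molecular} rests on Gutman's identity in Theorem~\ref{thm:molecular_to_matching}, which expresses any $M^G(x)$ as a $\Cds$-linear combination of matching polynomials $\mu^{G\setminus\tilde{\Gamma}}(x)$ indexed by $\tilde{\Gamma}\in\tilde{\mathcal{P}}_G$; since each such $\tilde{\Gamma}$ is in particular a $2$-regular subgraph of $G$, every summand vanishes at $x=\theta$ by the first step, and hence so does $M^G(\theta)$. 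This completes the proof.
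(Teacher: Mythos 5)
Your proof is correct and is exactly the paper's own argument: the paper states that Corollary~\ref{cor:aomoto_implies_molecular} follows by combining Theorem~\ref{thm:aomoto_implies_matching} with Corollary~\ref{cor:matching_implies_molecular}, which is precisely your chain (and your aside on Gutman's identity, Theorem~\ref{thm:molecular_to_matching}, matches how the paper justifies the latter corollary). Nothing is missing.
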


By combining Lemma~\ref{lem:molecular_implies_delta} with Corollary~\ref{cor:matching_implies_molecular} and Theorems~\ref{thm:abelian_cover_delta} and~\ref{thm:abelian_cover_equivalences}, we also obtain the following statement.

\begin{proposition}\label{prop:abelian_cover_molecular} A real number $\theta$ is an eigenvalue of $G^\ab$ if and only if it is a zero of every molecular polynomial $M^G(x)$ associated with $G$. 
\end{proposition}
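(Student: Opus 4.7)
The plan is to prove both directions by chaining together the results already established in this section, since the proposition is essentially a repackaging of Theorems~\ref{thm:abelian_cover_delta} and~\ref{thm:abelian_cover_equivalences} in the language of molecular polynomials. There is no genuine obstacle here; the task is to verify that the molecular polynomial formalism sits compatibly between the twisted characteristic polynomials $\phi^G_\xi(x)$ and the matching polynomials $\mu^{G\setminus\Gamma}(x)$ that appear in the Li–Magee–Sabri–Thomas characterization.

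For the forward direction, I would start from the hypothesis that $\theta$ is an eigenvalue of $G^\ab$. By Theorem~\ref{thm:abelian_cover_delta}, this means $\theta$ is a zero of $\phi^G_\xi(x)$ for every $\xi\in\Tds^{\vec{E}_+(G)}$, and extending each such $\xi$ to $\Tds^{\vec{E}(G)}$ via $\xi_e=\overline{\xi_{e^{-1}}}$ for $e\in\vec{E}_-(G)$ shows that condition (a) of Theorem~\ref{thm:abelian_cover_equivalences} holds. That theorem then delivers condition (b), namely that $\theta$ is a zero of $\mu^{G\setminus\Gamma}(x)$ for every $2$-regular subgraph $\Gamma$ of $G$. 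A direct appeal to Corollary~\ref{cor:matching_implies_molecular} now shows that $\theta$ is a zero of every molecular polynomial $M^G(x)$ associated with $G$.

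For the reverse direction, I would assume that $\theta$ is a zero of every molecular polynomial associated with $G$, and invoke Lemma~\ref{lem:molecular_implies_delta} to conclude that $\theta$ is a zero of $\phi^G_\xi(x)$ for every $\xi\in\Tds^{\vec{E}(G)}$. Restricting to $\xi\in\Tds^{\vec{E}_+(G)}$ and applying Theorem~\ref{thm:abelian_cover_delta} then yields that $\theta$ is an eigenvalue of $G^\ab$, closing the equivalence.

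The only point requiring a brief verification is the compatibility between the parametrization $\xi\in\Tds^{\vec{E}_+(G)}$ used in the definition of $A^G_\xi$ and the parametrization $\xi\in\Tds^{\vec{E}(G)}$ used both in Lemma~\ref{lem:molecular_implies_delta} and in Theorem~\ref{thm:abelian_cover_equivalences}(a); this is precisely the extension $\xi_e\defeq\overline{\xi_{e^{-1}}}$ for $e\in\vec{E}_-(G)$ already fixed in Section~\ref{sec:covers} and reused in the definition of $\lambda_C$ in the present section, so no new argument is needed.
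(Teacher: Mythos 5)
Your proposal is correct and is essentially the paper's own argument: the paper derives this proposition precisely by combining Lemma~\ref{lem:molecular_implies_delta}, Corollary~\ref{cor:matching_implies_molecular}, and Theorems~\ref{thm:abelian_cover_delta} and~\ref{thm:abelian_cover_equivalences}, exactly as you chain them in the two directions. Your remark on the compatibility of the $\Tds^{\vec{E}_+(G)}$ and $\Tds^{\vec{E}(G)}$ parametrizations is a reasonable bookkeeping check that the paper leaves implicit.
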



\section{Main results}\label{sec:main_results}\

We now state Theorem~\ref{thm:main_theorem_introduction_1} in its final form, providing several equivalent characterizations of the eigenvalues of $G^\uni$ and $G^\ab$.

\begin{theorem}\label{thm:main_theorem} Let $\theta$ be a real number. Then the following are equivalent:
\begin{enumerate}[(a)]
    \item $\theta$ is an eigenvalue of $G^\uni$;
    \item $\theta$ is an eigenvalue of $G^\ab$;
    \item $G$ has a $\theta$--Aomoto subset;
    \item $G$ has a refined $\theta$--Aomoto subset;
    \item $\theta$ is a zero of $\mu^{G\setminus\Gamma}(x)$ for every $2$-regular subgraph $\Gamma$ of $G$;
    \item $\theta$ is a zero of $\phi^{G\setminus\Gamma}(x)$ for every $2$-regular subgraph $\Gamma$ of $G$;
    \item $\theta$ is a zero of $\phi_\xi^{G}(x)$ for every $\xi\in\Tds^{\vec{E}_+(G)}$;
    \item $\theta$ is a zero of every molecular polynomial $M^G(x)$ associated with $G$.
\end{enumerate}
\end{theorem}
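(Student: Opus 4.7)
The plan is to reduce Theorem~\ref{thm:main_theorem} to Theorem~\ref{thm:main_theorem_introduction_2}, since almost every implication among (a)--(h) follows from prior results. Specifically, (a) $\Leftrightarrow$ (c) is Theorem~\ref{thm:universal_cover_eigenvalue}; (c) $\Leftrightarrow$ (d) is Proposition~\ref{prop:aomoto_implies_refined_aomoto}; the chain (b) $\Leftrightarrow$ (e) $\Leftrightarrow$ (f) $\Leftrightarrow$ (g) follows by combining Theorems~\ref{thm:abelian_cover_eigenvalue}, \ref{thm:abelian_cover_delta} and~\ref{thm:abelian_cover_equivalences}; (b) $\Leftrightarrow$ (h) is Proposition~\ref{prop:abelian_cover_molecular}; and (a) $\Rightarrow$ (b) is Theorem~\ref{thm:universal_cover_implies_abelian_cover} (equivalently, (c) $\Rightarrow$ (e) is Theorem~\ref{thm:aomoto_implies_matching}). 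Once this bookkeeping is in place, the only missing implication is (e) $\Rightarrow$ (c), which is precisely the contrapositive of Theorem~\ref{thm:main_theorem_introduction_2}.

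Thus the substance of the proof reduces to establishing Theorem~\ref{thm:main_theorem_introduction_2}: assuming $G$ has no $\theta$--Aomoto subset, one must exhibit a $2$--regular subgraph $\Gamma$ of $G$ with $\mu^{G\setminus\Gamma}(\theta)\neq 0$. I would pick $\Gamma$ so as to minimize $m_\theta(G\setminus\Gamma)$ over all $2$--regular subgraphs and argue the minimum is $0$. Otherwise, by Theorem~\ref{thm:multiplicity} the graph $G\setminus\Gamma$ has at least one $\theta$--critical component. If any such component contained a cycle $C$, then by Corollary~\ref{cor:critical_cycle} the disjoint union $\Gamma\cup C$ would be a $2$--regular subgraph of $G$ with $m_\theta(G\setminus(\Gamma\cup C))=m_\theta(G\setminus\Gamma)-1$, contradicting minimality. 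Hence every $\theta$--critical component of $G\setminus\Gamma$ is a tree, and invoking Theorem~\ref{thm:gallais_lemma}, Theorem~\ref{thm:matching_equals_characteristic}, and Theorem~\ref{thm:multiplicity} shows that $S\defeq 0_\theta^{G\setminus\Gamma}$ is a $\theta$--Aomoto subset \emph{of $G\setminus\Gamma$}.

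The main obstacle — and the step I expect to be hardest — is to promote $S$ to a $\theta$--Aomoto subset \emph{of $G$}, thereby contradicting the hypothesis. The forest structure $G[S]=(G\setminus\Gamma)[S]$ and the eigenvalue condition on each component are preserved automatically, but the Hall-type inequality $|\partial_G S|<\mathrm{cc}(G[S])$ can fail once vertices of $\Gamma$ with a neighbor in $S$ enter the boundary. Overcoming this requires fine control of how the $\theta$--Gallai--Edmonds decomposition behaves under deletion of $\theta$--critical cycles, which is precisely the purpose of the material developed in Section~\ref{sec:new_gallai_edmonds}. Concretely, I would combine those new structural results with the refined $\theta$--Aomoto condition (Definition~\ref{def:theta_aomoto}) and the Hall-type property of Theorem~\ref{thm:matched_special} to either trim $S$ to kill the offending boundary vertices coming from $\Gamma$, or to make a second-order choice of $\Gamma$ among minimizers that ensures re-insertion of $\Gamma$ does not inflate $\partial_G S$ beyond $\mathrm{cc}(G[S])-1$. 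Verifying that this reallocation step succeeds is exactly the content the subsequent sections are designed to supply.
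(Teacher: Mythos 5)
Your reduction of Theorem~\ref{thm:main_theorem} to Theorem~\ref{thm:main_theorem_introduction_2} is correct and is exactly the paper's bookkeeping: (a)$\Leftrightarrow$(c)$\Leftrightarrow$(d) from Theorem~\ref{thm:universal_cover_eigenvalue} and Proposition~\ref{prop:aomoto_implies_refined_aomoto}, the cluster (b),(e),(f),(g),(h) from Theorems~\ref{thm:abelian_cover_delta},~\ref{thm:abelian_cover_equivalences} and Proposition~\ref{prop:abelian_cover_molecular}, (c)$\Rightarrow$(e) from Theorem~\ref{thm:aomoto_implies_matching}, leaving only (e)$\Rightarrow$(c). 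The problem is that your sketch of that remaining implication has a genuine gap at its first decisive step. You choose $\Gamma$ minimizing $m_\theta(G\setminus\Gamma)$ and claim that if a $\theta$--critical component of $G\setminus\Gamma$ contains a cycle $C$, then ``by Corollary~\ref{cor:critical_cycle}'' one has $m_\theta(G\setminus(\Gamma\sqcup C))=m_\theta(G\setminus\Gamma)-1$. Corollary~\ref{cor:critical_cycle} says nothing of the sort: it gives the inequality $m_\theta(G\setminus C)\geq m_\theta(G)-1$ and a \emph{necessary} condition for equality (that $C$ lie in a $\theta$--critical component), not a sufficient one. Deleting an arbitrary cycle inside a $\theta$--critical component can leave the multiplicity unchanged or even increase it (by Lemma~\ref{lem:entry_of_path}, $W_\theta$ of the corresponding path can be nonnegative). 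The existence of \emph{some} $\theta$--critical cycle in a connected $\theta$--critical multi-graph that is not a tree is precisely Theorem~\ref{thm:theta_critical_cycle}, whose proof requires the machinery of $\theta$--critical paths (Lemmas~\ref{lem:entry_of_path},~\ref{lem:entry_of_path_start_critical},~\ref{lem:mult1_implies_critical_path}); your minimality argument does not substitute for it.

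The second, larger gap is the step you yourself flag as the ``main obstacle'': passing from ``$0_\theta^{G\setminus\Gamma}$ is a $\theta$--Aomoto subset of $G\setminus\Gamma$'' to a contradiction with ``$G$ has no $\theta$--Aomoto subset.'' You only propose to ``trim $S$'' or make a ``second-order choice of $\Gamma$,'' which is a plan, not an argument, and it is exactly the content of the theorem. The paper closes this by a different organization: induction on $k=m_\theta(G)$, removing one $\theta$--critical cycle $C$ at a time (Theorem~\ref{thm:theta_critical_cycle} together with Corollary~\ref{cor:critical_in_comp_equiv_critical}), and proving Proposition~\ref{prop:removing_critical_cycle} --- that deleting a $\theta$--critical cycle cannot create a $\theta$--Aomoto subset --- via Proposition~\ref{prop:subset_in_gallai_edmonds}, Corollary~\ref{cor:refined_aomoto_in_gallai_edmonds}, and the stability result Proposition~\ref{prop:critical_stability}. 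Your single extremal $\Gamma$ does not come with this one-cycle-at-a-time structure, so those tools do not apply to it directly; without an analogue of Proposition~\ref{prop:removing_critical_cycle} (or a worked-out trimming/reallocation argument), the proof of (e)$\Rightarrow$(c), and hence of Theorem~\ref{thm:main_theorem}, is not complete.
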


Note that Theorem~\ref{thm:universal_cover_eigenvalue} and Proposition~\ref{prop:aomoto_implies_refined_aomoto} imply the equivalence of $(a)$, $(c)$, and $(d)$. Moreover, Theorems~\ref{thm:abelian_cover_delta} and~\ref{thm:abelian_cover_equivalences}, together with Proposition~\ref{prop:abelian_cover_molecular}, establish the equivalences of $(b)$, $(e)$, $(f)$, $(g)$, and $(h)$. In addition, Theorem~\ref{thm:aomoto_implies_matching} shows that $(c)$ implies $(e)$. Finally, Theorem~\ref{thm:main_theorem_introduction_2} will prove that $(f)$ implies $(c)$, completing the proof.

The version of Theorem~\ref{thm:main_theorem_introduction_2} that we are going to prove is as follows.

\begin{theorem}\label{thm:main_theorem_technical} Assume that $G$ has no $\theta$--Aomoto subset and let $k\defeq m_\theta(G)$. Then there are $k$ disjoint cycles $C_1,\dots, C_k$ in $G$ such that 
\[
m_\theta(G\setminus (C_1\sqcup\cdots\sqcup C_k)) = 0.
\]
\end{theorem}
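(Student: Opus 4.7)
I would prove Theorem~\ref{thm:main_theorem_technical} by induction on $k=m_\theta(G)$. The base case $k=0$ is immediate (take no cycles). For $k\geq 1$, the plan is to find a single cycle $C$ in $G$ such that (i) $m_\theta(G\setminus C)=k-1$ and (ii) $G\setminus C$ also has no $\theta$--Aomoto subset. Applying the inductive hypothesis to $G\setminus C$ then yields $k-1$ further cycles in $G\setminus C$, automatically vertex-disjoint from $C$, whose removal kills the remaining multiplicity.

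To locate $C$, one first shows that $G[0_\theta^G]$ has a cyclic component. Let $\mathcal{T}$ be the set of tree components of $G[0_\theta^G]$ and, for $\mathcal{T}'\subseteq\mathcal{T}$, set $S_{\mathcal{T}'}=\bigcup_{T\in\mathcal{T}'}V(T)$. Iterating Theorem~\ref{thm:stability} over $\partial_G 0_\theta^G$ shows that every component $T$ of $G[0_\theta^G]$ is $\theta$--critical as a multi-graph, so $m_\theta(T)=1$ by Gallai's Lemma (Theorem~\ref{thm:gallais_lemma}); for $T\in\mathcal{T}$, Theorem~\ref{thm:matching_equals_characteristic} then gives that $\theta$ is an eigenvalue of $T$. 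Thus $G[S_{\mathcal{T}'}]$ is a forest whose $|\mathcal{T}'|$ components each have $\theta$ as an eigenvalue, and the no-Aomoto hypothesis forces $|\partial_G S_{\mathcal{T}'}|\geq|\mathcal{T}'|$. Since $\partial_G S_{\mathcal{T}'}$ is precisely the set of vertices of $\partial_G 0_\theta^G$ adjacent to some $T\in\mathcal{T}'$, this is Hall's condition for matching $\mathcal{T}$ into $\partial_G 0_\theta^G$, yielding $|\partial_G 0_\theta^G|\geq|\mathcal{T}|$. By Theorem~\ref{thm:multiplicity}, the number of cyclic $\theta$--critical components is at least $\mathrm{cc}(G[0_\theta^G])-|\mathcal{T}|\geq k\geq 1$, so one can pick a cycle $C$ in such a component $T$.

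For (i), Corollary~\ref{cor:critical_cycle} gives $m_\theta(G\setminus C)\geq k-1$. For the reverse inequality, let $P$ be a path visiting all vertices of $C$ in cyclic order, so $V(P)=V(C)$ and Lemma~\ref{lem:entry_of_path} gives $m_\theta(G\setminus C)=k+W_\theta^G(P)$. The analogous computation inside $T$, combined with $m_\theta(T\setminus C)=0$ (any cycle of a $\theta$--critical component being $\theta$--critical in that component), yields $W_\theta^T(P)=-1$. So (i) reduces to matching $W_\theta^G(P)=W_\theta^T(P)$ for a suitable starting vertex and orientation, which I would establish via Lemma~\ref{lem:entry_of_path_start_critical} and the stability of the Gallai--Edmonds decomposition under deletion of $\partial_G 0_\theta^G$ to align the vertex-classes along $P$ in $G$ with those in $T$.

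The principal obstacle is (ii): excluding a subset $S'\subseteq V(G)\setminus V(C)$ with $G[S']$ a forest, $\theta$ an eigenvalue of each component, and $|\partial_{G\setminus C}S'|<\mathrm{cc}(G[S'])\leq|\partial_G S'|$, the last inequality forcing $V(C)\cap\partial_G S'\neq\emptyset$. Ruling this out requires a careful selection of $C$ within $T$, guided by the Hall-type surplus of Theorem~\ref{thm:matched_special} together with the new structural results on the $\theta$--Gallai--Edmonds decomposition to be developed in Section~\ref{sec:new_gallai_edmonds}, which presumably allow $C$ to be chosen so that no vertex of $V(C)$ lies in the frontier of any forest-type subset exhibiting the forbidden inequalities. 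I expect (ii) to be the hardest part of the argument.
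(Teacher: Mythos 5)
Your induction skeleton (find one cycle $C$ with $m_\theta(G\setminus C)=k-1$ and with $G\setminus C$ still free of $\theta$--Aomoto subsets, then recurse) is exactly the paper's strategy, and your Hall-type argument that some $\theta$--critical component of $G$ is not a tree is sound and essentially equivalent to the paper's use of Corollary~\ref{cor:refined_aomoto_in_gallai_edmonds}, Theorem~\ref{thm:matched_special} and Theorem~\ref{thm:multiplicity}. The proposal breaks down, however, at the point where you pick \emph{an arbitrary} cycle $C$ in a cyclic $\theta$--critical component $T$ and assert, in your step (i), that ``any cycle of a $\theta$--critical component is $\theta$--critical in that component.'' That claim is false. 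Already in the classical unweighted case $\theta=0$: take the $5$-cycle $1\,2\,3\,4\,5$ with the chord $13$; this graph is factor-critical (i.e.\ $0$--critical and connected), but deleting the $4$-cycle $1\,3\,4\,5$ leaves the single vertex $2$, so that cycle is not $0$--critical. Producing a cycle whose deletion drops the multiplicity is precisely the content of Theorem~\ref{thm:theta_critical_cycle}, one of the genuinely new results of the paper: its proof does not take an arbitrary cycle but constructs one, by extracting a minimal path $\hat P$ with $W_\theta(\hat P)=0$ (Lemmas~\ref{lem:entry_of_path} and~\ref{lem:entry_of_path_start_critical}), showing its endpoints see vertices of $0_\theta^{G\setminus\hat P}$, and closing it up through a $\theta$--critical path supplied by Lemma~\ref{lem:mult1_implies_critical_path}. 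Without this (or some substitute), your step (i) has no proof, and the subsequent claim $W_\theta^T(P)=-1$ has nothing to rest on. Relatedly, transferring criticality from $T$ to $G$ (your ``matching $W_\theta^G(P)=W_\theta^T(P)$'') is not covered by Theorem~\ref{thm:stability}, which only concerns deletion of vertices of $\partial_G 0_\theta^G$; the paper needs the new Propositions~\ref{prop:subset_in_gallai_edmonds} and~\ref{prop:critical_remove} (Corollary~\ref{cor:critical_in_comp_equiv_critical}) for this.

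The second gap is your step (ii), which you explicitly leave open: you only say that the cycle should ``presumably'' be chosen, via unspecified structural results, so that no forbidden Aomoto-type set appears in $G\setminus C$. This is the other substantive half of the argument and cannot be waved at. The paper's Proposition~\ref{prop:removing_critical_cycle} shows that no careful selection beyond $\theta$--criticality is needed: for \emph{any} $\theta$--critical cycle $C$, the multi-graph $G\setminus C$ has no $\theta$--Aomoto subset. Its proof is not soft; it combines Corollary~\ref{cor:refined_aomoto_in_gallai_edmonds} (a refined Aomoto subset consists of $\theta$--critical tree components of $G\setminus C$), Proposition~\ref{prop:subset_in_gallai_edmonds} (to exclude vertices of $\partial_G H$), and the stability result Proposition~\ref{prop:critical_stability} (to pull a putative vertex $i\in 0_\theta^{G\setminus C}$ inside the critical component $H$ and contradict $m_\theta(H\setminus C)=0$). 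As it stands, your proposal identifies the right outline but proves neither of the two key facts it relies on, and the one concrete justification you do offer for (i) is based on a false statement.
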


Note that by Corollary~\ref{cor:critical_cycle} at least $k$ cycles are required in this statement. 

Theorem~\ref{thm:main_theorem_technical} is proved by induction on $k=m_\theta(G)$ in two steps. The first consists of finding a $\theta$--critical cycle $C$, assuming $G$ has no $\theta$--Aomoto subset. The second, more involved step, is to show that deleting this cycle does not create a $\theta$--Aomoto subset in $G\setminus C$.

As mentioned in Section~\ref{sec:covers}, although $X_\theta^G$, the $\theta$–Aomoto subset defined in terms of the spectral measures, is always a maximizer in Theorem~\ref{thm:density_of_states}, it may not be the unique one. Our next result shows that if the maximization is performed over refined $\theta$--Aomoto subsets, then there is a unique maximizer. For this result, let $\mathcal{R}_\theta^G$ denote the set of refined $\theta$--Aomoto subsets of $G$.

\begin{theorem}\label{thm:density_of_states_refined} Let $\theta$ be an eigenvalue of $G^\uni$. Then,
\[
\tau(\theta) =\max_{S\in \mathcal{R}_\theta^G}\frac{\mathrm{cc}(G[S]) -\lvert\partial_G S\rvert}{\lvert V(G)\rvert},
\]  
Moreover, there is a unique maximizer, which is the maximal refined $\theta$--Aomoto subset of $G$.
\end{theorem}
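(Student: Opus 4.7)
The proof breaks into two parts. For the formula, the inequality $\max_{S\in\mathcal{R}_\theta^G}\leq\max_{S\in\mathcal{A}_\theta^G}$ is immediate since $\mathcal{R}_\theta^G\subseteq\mathcal{A}_\theta^G$, and the reverse inequality follows from Proposition~\ref{prop:aomoto_implies_refined_aomoto}, which produces, for every $S\in\mathcal{A}_\theta^G$, a refined subset $\tilde S\subseteq S$ satisfying $\mathrm{cc}(G[\tilde S])-\lvert\partial_G\tilde S\rvert\geq\mathrm{cc}(G[S])-\lvert\partial_G S\rvert$. Combined with Theorem~\ref{thm:density_of_states}, this yields the desired identity for $\tau(\theta)$.

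For uniqueness, the plan is to show that $\mathcal{R}_\theta^G$ is closed under binary unions and that the objective is monotone under inclusion within $\mathcal{R}_\theta^G$. More precisely, I would establish that for every $S_1,S_2\in\mathcal{R}_\theta^G$, the union $S_1\cup S_2$ lies in $\mathcal{R}_\theta^G$ and satisfies
\[
\mathrm{cc}(G[S_1\cup S_2])-\lvert\partial_G(S_1\cup S_2)\rvert\geq \mathrm{cc}(G[S_i])-\lvert\partial_G S_i\rvert\quad\text{for }i=1,2,
\]
with equality forcing $S_1\cup S_2=S_i$. These two properties immediately imply that $S^\ast\defeq\bigcup_{S\in\mathcal{R}_\theta^G}S$ is both the unique maximal element of $\mathcal{R}_\theta^G$ under inclusion and the unique maximizer of the objective, completing the proof.

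The main obstacle will be the closure under unions. Verifying that $G[S_1\cup S_2]$ remains a forest whose components are $\theta$-critical, and that the defective Hall condition persists on $\partial_G(S_1\cup S_2)$, is not evident from the bare definition; a priori, two tree components of $G[S_1]$ and $G[S_2]$ could interact through edges of $G$ to create a cycle or to destroy the Hall condition. I plan to circumvent this by invoking Corollary~\ref{cor:refined_aomoto_in_gallai_edmonds}, which places refined $\theta$-Aomoto subsets inside the rigid combinatorial frame of the $\theta$-Gallai-Edmonds decomposition of $G$. In this frame, the forest and criticality conditions for $S_1\cup S_2$ become automatic, the defective Hall condition for $\partial_G(S_1\cup S_2)$ reduces to a combinatorial argument combining the Hall conditions for $S_1$ and $S_2$ with Theorem~\ref{thm:matched_special}, and the monotonicity of the objective reduces to a defect-counting argument in the spirit of the one appearing in the proof of Proposition~\ref{prop:aomoto_implies_refined_aomoto}.
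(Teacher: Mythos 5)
Your proposal is correct and follows essentially the same route as the paper: the formula via Proposition~\ref{prop:aomoto_implies_refined_aomoto} combined with Theorem~\ref{thm:density_of_states}, and uniqueness via closure of $\mathcal{R}_\theta^G$ under unions (proved through Corollary~\ref{cor:refined_aomoto_in_gallai_edmonds}, exactly as in the paper's Proposition~\ref{prop:refined_aomoto_union}) together with strict monotonicity of the objective under inclusion (the paper's Proposition~\ref{prop:refined_aomoto_order}). The paper packages these two facts as standalone propositions and then concludes precisely as you outline, so no further comment is needed.
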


It is not clear whether $X_\theta^G$ is a maximizer in Theorem~\ref{thm:density_of_states_refined}, since it has not been shown that it is a refined $\theta$--Aomoto subset. By~\cite[Thm. 3.1 and Lem. 4.2]{banks2022point}, the only property missing for $X_\theta^G$ to be a refined $\theta$--Aomoto subset is that if $\emptyset \neq U \subseteq \partial_G X_\theta^G$, then $U$ is adjacent to at least $\lvert U \rvert + 1$ components of $G[X_\theta^G]$. We believe that, for instance by adapting the proof of~\cite[Thm. 1.11]{bencs2022atoms} to weighted infinite multi-graphs, this property could be established. If this is indeed the case, $X_\theta^G$ would be characterized as the maximal refined $\theta$--Aomoto subset of $G$ and as the unique maximizer in Theorem~\ref{thm:density_of_states_refined}.


\section{New results on $\theta$--Gallai-Edmonds decomposition}\label{sec:new_gallai_edmonds}\

We begin by studying (refined) $\theta$--Aomoto subsets and by providing an alternative proof of Theorem~\ref{thm:aomoto_implies_matching}. These results will also be required for the proof of Theorems~\ref{thm:main_theorem_technical} and~\ref{thm:density_of_states_refined}.

\begin{proposition}\label{prop:robust_refined_aomoto} Assume that $G$ has a $\theta$--Aomoto subset and let $C$ be a cycle in $G$. Then, $G\setminus C$ has a $\theta$--Aomoto subset. 
\end{proposition}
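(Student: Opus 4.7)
The plan is to first apply Proposition~\ref{prop:aomoto_implies_refined_aomoto} to pass to a \emph{refined} $\theta$--Aomoto subset $S$ of $G$, with components $T_1,\ldots,T_k$ and $|\partial_G S|\leq k-1$. I will then construct a $\theta$--Aomoto subset of $G\setminus C$ by deleting from $S$ precisely those components that meet $C$. Setting $A:=\{i:V(T_i)\cap V(C)\neq\emptyset\}$, $a:=|A|$, the candidate is $S^*:=\bigcup_{i\notin A}V(T_i)$. If $a=0$ then $S$ itself survives as a $\theta$--Aomoto subset of $G\setminus C$, and the extreme case $a=k$ can be ruled out at the outset (it would force $|V(C)\cap\partial_G S|\geq k>|\partial_G S|$); so the interesting range is $1\leq a\leq k-1$.

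For such $S^*$, the forest and eigenvalue conditions in Definition~\ref{def:theta_aomoto} are essentially free: $(G\setminus C)[S^*]=G[S^*]$ is a sub-forest of $G[S]$, and its connected components are exactly the $\theta$--critical trees $T_i$ with $i\notin A$, each of which has $\theta$ as an eigenvalue thanks to Theorems~\ref{thm:gallais_lemma} and~\ref{thm:matching_equals_characteristic}. The substantive step is the frontier inequality $|\partial_{G\setminus C}S^*|<k-a$.

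To establish this I would estimate the total drop in frontier from $\partial_G S$ to $\partial_{G\setminus C}S^*$ as the sum of two disjoint contributions: the set $W:=\partial_G S\setminus\partial_G S^*$ of frontier vertices of $S$ having no neighbor in $S^*$, and the portion $\partial_G S^*\cap V(C)$ of the new frontier killed by removing $V(C)$. Applying the refined Hall-type property of $S$ to $W$ yields $|W|\leq a-1$. A short bookkeeping identity writes the total drop as $|W|+|\partial_G S^*\cap V(C)|$, and this is bounded below by $|V(C)\cap\partial_G S|$. To get the clean lower bound $|V(C)\cap\partial_G S|\geq a$, I would use that since $G[S]$ is a forest $C$ cannot lie inside a single $T_i$, so $C$ alternates between maximal runs of vertices inside some $T_i\in A$ and runs of vertices outside $S$, with at least $a$ runs of each type and the first vertex of each outside-run belonging to $\partial_G S$. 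Combining everything gives $|\partial_{G\setminus C}S^*|\leq|\partial_G S|-a\leq k-a-1$, as required.

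The main obstacle I expect is the careful justification of $|V(C)\cap\partial_G S|\geq a$ via the run-count, including handling the degenerate cycles of length $1$ (self-loops, whose vertex must lie outside $S$ by the forest condition on $G[S]$) and length $2$ (parallel edges, with at most one endpoint in $S$ for the same reason), and in particular making sure no edge of $C$ can go directly between two distinct components $T_i,T_j$ of $G[S]$ without first passing through a vertex outside $S$.
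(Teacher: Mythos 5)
Your proposal is correct and follows essentially the same route as the paper's proof: delete from the Aomoto subset the components meeting $C$, and use the fact that $G[S]$ is an induced forest to find at least $a$ distinct vertices of $\partial_G S$ on $C$ (one at the start of each run of $C$ outside $S$), which combined with $\lvert\partial_{G\setminus C}S^*\rvert\leq\lvert\partial_G S\rvert-a\leq k-1-a$ gives the frontier inequality. The only difference is your preliminary passage to a refined subset and the bound $\lvert W\rvert\leq a-1$, which are superfluous: your final chain of inequalities uses only $\lvert\partial_G S\rvert\leq k-1$ and the run-count, exactly as the paper does with an arbitrary $\theta$--Aomoto subset.
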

\begin{proof} Let $S$ be a $\theta$--Aomoto subset of $G$ with components $T_1,\dots, T_k$. Let $I\subseteq [k]$ be the set of indices $i$ in $[k]$ such that $C$ intersects $T_i$, and define $\tilde{S}\defeq\bigsqcup_{j\in [k]\setminus I} V(T_j)$. We claim that $\tilde{S}$ is a $\theta$--Aomoto subset of $G\setminus C$. It suffices to prove that $\lvert\partial_{G\setminus C}\tilde{S}\rvert <\mathrm{cc}\big((G\setminus C)[\tilde{S}]\big) = k -\lvert I\rvert$, that is,\[
\lvert I\rvert +\lvert\partial_{G\setminus C}\tilde{S}\rvert < k.
\]

Observe that $|I|\leq |\partial_G S\cap C|$. Indeed, since each $T_j$ is a tree, $C$ cannot be contained entirely within any component $T_j$. Moreover, in order to travel between any two components $T_i$ and $T_j$, the cycle $C$ must pass through $\partial_G S$. Note also that $\partial_{G\setminus C}\tilde{S}=\partial_G\tilde{S}\setminus C\subseteq\partial_{G}S\setminus C$, since $\tilde{S}\cap C=\emptyset$ and $\tilde{S}$ is the union of some components of $G[S]$.  

Hence,  
\begin{align*}
\lvert I\rvert +\lvert\partial_{G\setminus C}\tilde{S}\rvert &\leq\lvert\partial_G S\cap C\rvert +\lvert\partial_{G\setminus C}\tilde{S}\rvert\\&\leq\lvert\partial_G S\cap C\rvert +\lvert\partial_{G}S\setminus C\rvert\\ &=\lvert\partial_G S\rvert\\ &< k,  
\end{align*}  
\noindent proving our claim.
\end{proof}

The next proposition is technical, but fundamental to the results that follow.

\begin{proposition}\label{prop:subset_in_gallai_edmonds} Let $S\subseteq V(G)$ such that:
\begin{itemize}
    \item each component of $G[S]$ is $\theta$--critical, and
    \item every $\emptyset\neq U\subseteq\partial_G S$ is adjacent to at least $\lvert U\rvert + 1$ components of $G[S]$.
\end{itemize}
Then the components of $G[S]$ are $\theta$--critical components of $G$. Moreover, if $Z$ is a subset of vertices of a component of $G[S]$, then $\partial_G S\subseteq\infty^{G\setminus Z}_\theta$.
\end{proposition}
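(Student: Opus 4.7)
The plan is to prove both assertions of the proposition simultaneously by induction on $|V(G)|$. The base case is $\partial_G S = \emptyset$: each component of $G[S]$ is then a union of components of $G$, so by the fact that the Gallai--Edmonds decomposition respects disjoint unions, $\theta$-criticality as a subgraph coincides with $\theta$-criticality as a component of $G$. The moreover statement is vacuous here.

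For the inductive step, assume $\partial_G S \neq \emptyset$ and pick $u \in \partial_G S$. The pair $(G \setminus u, S)$ satisfies the hypotheses (the components of $G[S]$ are unchanged, and the Hall-type condition restricts to $\partial_{G \setminus u} S = \partial_G S \setminus \{u\}$), so by the inductive hypothesis every component of $G[S]$ is a $\theta$-critical component of $G \setminus u$---in particular $\alpha_v^{G \setminus u}(\theta) = 0$ for every $v \in S$---and for every subset $Z$ of a component of $G[S]$, $\partial_G S \setminus \{u\} \subseteq \infty_\theta^{(G\setminus u)\setminus Z}$.

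To lift these conclusions to $G$, I would exploit that the Hall-type condition forces $u$ to be adjacent to at least two distinct components of $G[S]$. For the moreover statement with $Z$ contained in a component $T_Z$, pick a neighbor $v'$ of $u$ in some component $T' \neq T_Z$; granted that $v' \in 0_\theta^{(G\setminus u)\setminus Z}$, Lemma~\ref{lem:alpha_recursion} applied at $u$ in $G \setminus Z$ produces the pole $\alpha_u^{G \setminus Z}(\theta) = \infty$, and Lemma~\ref{lem:contraction} promotes the membership of $\partial_G S \setminus \{u\}$ in $\infty_\theta$ from $(G\setminus u)\setminus Z$ to $G \setminus Z$. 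The specialization $Z = \emptyset$ gives $u \in \infty_\theta^G$, and the main claim---that $\alpha_v^G(\theta) = 0$ for every $v \in S$---then follows by Lemma~\ref{lem:contraction} with $i = v$ and $j = u$:
\[
\alpha_v^G(\theta) = \alpha_v^{G\setminus u}(\theta) + \frac{\lambda_{v \sim u}^G(\theta)}{\alpha_u^{G\setminus v}(\theta)} = \frac{\lambda_{v \sim u}^G(\theta)}{\alpha_u^{G\setminus v}(\theta)} = 0,
\]
where the moreover statement with $Z = \{v\}$ supplies $\alpha_u^{G\setminus v}(\theta) = \infty$.

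The main obstacle is justifying the intermediate claim $v' \in 0_\theta^{(G\setminus u)\setminus Z}$. The inductive hypothesis gives $v' \in 0_\theta^{G \setminus u}$ and controls $\partial_G S \setminus \{u\}$ upon deleting $Z$, but does not directly describe how other vertices of $S$ behave when vertices inside a single $\theta$-critical component are removed. Since $Z \subseteq 0_\theta^{G \setminus u}$ rather than $Z \subseteq \partial 0_\theta^{G \setminus u}$, Theorem~\ref{thm:stability} does not apply directly. Resolving this will likely require either an auxiliary lemma asserting that distinct $\theta$-critical components behave independently under vertex deletions confined to one of them, or a strengthening of the inductive statement that builds this independence in from the start.
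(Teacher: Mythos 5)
Your overall strategy (induction on deleting a boundary vertex, then using Lemma~\ref{lem:alpha_recursion} to force poles at boundary vertices and Lemma~\ref{lem:contraction} to force zeros on $S$) is the same as the paper's, but the step you flag yourself is a genuine gap, and it is exactly the point where the paper's argument does something different. The claim you need --- that a vertex $v'$ lying in a component $T'\neq T_Z$ of $G[S]$ satisfies $v'\in 0_\theta^{(G\setminus u)\setminus Z}$ when $Z$ is confined to $T_Z$ --- does not follow from your inductive hypothesis, and the tools available at this stage do not supply it: Theorem~\ref{thm:stability} is inapplicable because $Z\subseteq 0_\theta^{G\setminus u}$ rather than $Z\subseteq\partial 0_\theta^{G\setminus u}$, and the ``independence of critical components under deletions confined to one of them'' lemma you propose is essentially Propositions~\ref{prop:critical_remove} and~\ref{prop:critical_stability} of the paper, which are proved \emph{after} and \emph{by means of} Proposition~\ref{prop:subset_in_gallai_edmonds}; invoking them here would be circular. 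Note also that the components of $G[S]$ are $\theta$--critical components of $G\setminus u$, not connected components, so $T_Z$ and $T'$ may sit inside the same connected component of $G\setminus u$ and deleting $Z$ can a priori perturb $\alpha_{v'}$; the independence is a theorem, not a formality.

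The paper avoids this by building the deletion of $Z$ into the graph to which the induction hypothesis is applied, and restoring the Hall-type condition combinatorially rather than analytically. For $j\in\partial_G S$ and $Z$ inside a component $H$, it forms $\tilde G:=G\setminus(\{j\}\cup Z)$ and lets $U'$ be the union of all ``tight'' sets $U\subseteq\partial_{G\setminus j}S$ that are adjacent to $H$ and to exactly $\lvert U\rvert+1$ components of $G[S]$; then $\tilde S$ is the union of the components of $G[S]$ not adjacent to $U'$. One checks that $\tilde S\neq\emptyset$, that $(\tilde G,\tilde S)$ again satisfies the hypotheses with strictly smaller boundary, and that $j$ has a neighbor in some component of $\tilde G[\tilde S]$; the induction hypothesis then gives that neighbor in $0_\theta^{\tilde G}$, which is all that Lemma~\ref{lem:alpha_recursion} needs to conclude $j\in\infty_\theta^{G\setminus Z}$. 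Your remaining steps (the promotion of the other boundary vertices to $\infty_\theta^{G\setminus Z}$ via interlacing, and the final deduction $v\in 0_\theta^G$ from $v\in 0_\theta^{G\setminus u}$ and $u\in\infty_\theta^{G\setminus v}$, which should be cited to the case analysis behind Lemma~\ref{lem:contraction} rather than performed as a bare division, since $\lambda^G_{v\sim u}(\theta)$ may itself be infinite) are sound, so repairing your proof amounts to replacing the unproved independence claim with a pruning construction of this kind or an equivalent strengthening of the induction.
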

\begin{proof} We proceed by induction on $k\defeq\lvert\partial_G S\rvert$.  

If $k = 0$, then the components of $G[S]$ coincide with components of $G$. Since each component of $G[S]$ is $\theta$--critical, it follows that they are $\theta$--critical components of $G$. 

Now assume $k\geq 1$ and, by induction hypothesis, that the statement holds for every multi-graph $\tilde{G}$ and subset $\tilde{S}\subseteq V(\tilde{G})$ with $\lvert\partial_{\tilde{G}}\tilde{S}\rvert < k$. Since $\partial_G 0^G_\theta\subseteq \infty^G_\theta$, it suffices to show that $S\subseteq 0^G_\theta$ and $\partial_G S\subseteq\infty^{G\setminus Z}_\theta$ for every subset $Z$ of some component $H$ of $G[S]$. 

Consider a subset $Z$ of some component $H$ of $G[S]$, and let $j\in\partial_G S$ be arbitrary. Suppose $U, V\subseteq\partial_{G\setminus j} S$ are both adjacent to $H$ and each is adjacent to exactly $\lvert U\rvert + 1$ and $\lvert V\rvert + 1$ components of $G[S]$, respectively. Then $U\cup V$ is adjacent to $H$ and to exactly $\lvert U\cup V\rvert + 1$ components of $G[S]$. Define $U'$ as the union of all subsets $U\subseteq\partial_{G\setminus j} S$ that are adjacent to $H$ and to exactly $\lvert U\rvert + 1$ components of $G[S]$. By construction, if $U'\neq\emptyset$, then $U'$ is adjacent to $H$ and to exactly $\lvert U'\rvert + 1$ components of $G[S]$.

Consider $\tilde{S}\subseteq S$, defined as the union of the components of $G[S]$ that are not adjacent to $U'$, and let $\tilde{G}\defeq G\setminus (\{j\}\cup Z)$. Since $\mathrm{cc}(G[S])\geq\lvert\partial_G S\rvert + 1 =\lvert\partial_{G\setminus j} S\rvert + 2$, and because $U'$ (when nonempty) is adjacent to exactly $\lvert U'\rvert + 1$ components of $G[S]$, it follows that at least one component of $G[S]$ is not adjacent to $U'$. Therefore, $\tilde{S}\neq\emptyset$. 

We claim that $\tilde{S}\subseteq V(\tilde{G})$ satisfies the hypothesis of the statement and that $\lvert\partial_{\tilde{G}}\tilde{S}\rvert < k$, so $\tilde{S}\subseteq 0^{\tilde{G}}_\theta$. Indeed, each component of $\tilde{G}[\tilde{S}]$ is $\theta$--critical. Moreover, if $\emptyset\neq U\subseteq\partial_{\tilde{G}}\tilde{S}$, then $U\cup U'\supsetneq U'$, so $U\cup U'$ is adjacent to at least $\lvert U\cup U'\rvert + 2$ components of $G[S]$. Since $U'$ (when nonempty) is adjacent to exactly $\lvert U'\rvert + 1$ such components, it follows that $U$ is adjacent to at least $\lvert U\rvert + 1$ components of $G[S]$ that are not adjacent to $U'$, that is, components of $\tilde{G}[\tilde{S}]$. Finally, as $\partial_{\tilde{G}}\tilde{S}\subseteq\partial_{G\setminus j} S$, we conclude that $\lvert\partial_{\tilde{G}}\tilde{S}\rvert < k$. This proves the claim.

Now, observe that $U'\cup\{j\}$ is adjacent to at least $\lvert U'\cup\{j\}\rvert + 1 =\lvert U'\rvert + 2$ components of $G[S]$. Since $U'$ (when nonempty) is adjacent to exactly $\lvert U'\rvert + 1$ components of $G[S]$, it follows that $j$ is adjacent to at least one component of $G[S]$ not adjacent to $U'$, that is, a component of $\tilde{G}[\tilde{S}]$. As $\tilde{S}\subseteq 0^{\tilde{G}}_\theta$ and $j$ is adjacent to a vertex of $\tilde{S}$ in $\tilde{G}=G\setminus (Z\cup\{j\})$, Lemma~\ref{lem:alpha_recursion} (see also~\cite[Lem. 15]{spier2023refined} or~\cite[Lem. 3.4]{godsil1995algebraic}) implies that $j\in\infty^{G\setminus Z}_\theta$. Since $j\in\partial_G S$ is arbitrary, it follows that $\partial_G S\subseteq\infty^{G\setminus Z}_\theta$ for every subset $Z$ of a component $H$ of $G[S]$.

Now, fix a vertex $j \in \partial_G S$, and let $i \in S$ be arbitrary. Observe that $S$ and $G\setminus j$ satisfy the hypotheses of the statement, and since $\lvert\partial_{G\setminus j} S\rvert < k$, we have $S\subseteq 0^{G\setminus j}_\theta$ by the induction hypothesis. Thus we have $i\in S\subseteq 0^{G\setminus j}_\theta$. Moreover, by applying the previously established result with $Z =\{i\}$, we obtain $j\in\infty^{G\setminus Z}_\theta =\infty^{G\setminus i}_\theta$. Therefore, $i\in 0^{G\setminus j}_\theta$ and $j\in\infty^{G\setminus i}_\theta$. By~\cite[Prop. 20--24 or Fig. 5]{spier2023refined}, it follows that $i\in 0^G_\theta$. Since $i\in S$ was arbitrary, we conclude that $S\subseteq 0^G_\theta$.
\end{proof}

By Theorem~\ref{thm:matched_special}, we may apply Proposition~\ref{prop:subset_in_gallai_edmonds} with $S = 0^G_\theta$, which yields $\partial_G 0^G_\theta\subseteq\infty^{G\setminus Z}_\theta$ whenever $Z$ is a subset of a $\theta$--critical component of $G$. Moreover, Proposition~\ref{prop:subset_in_gallai_edmonds} immediately implies the following result.

\begin{corollary}\label{cor:refined_aomoto_in_gallai_edmonds} The following statements are equivalent: 
\begin{enumerate}[(a)]
    \item $S$ is a refined $\theta$--Aomoto subset of $G$; 
    \item $S = T_1\sqcup\cdots\sqcup T_k$, where $T_1,\dots, T_k$ are $\theta$--critical components of $G$ that are trees, and every $\emptyset\neq U\subseteq\partial_G S$ is adjacent to at least $\lvert U\rvert + 1$ of the trees $T_i$. 
\end{enumerate}
\end{corollary}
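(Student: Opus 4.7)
The plan is to prove the two implications separately, noting that the substantive content already lives in Proposition~\ref{prop:subset_in_gallai_edmonds} and that the corollary amounts to unpacking definitions on both sides.

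For the direction $(a)\Rightarrow(b)$, I would start directly from the definition of a refined $\theta$--Aomoto subset. By definition, $G[S]$ is a forest whose components $T_1,\dots,T_k$ are each $\theta$--critical, and every nonempty $U\subseteq\partial_G S$ is adjacent to at least $\lvert U\rvert+1$ of these components. These are precisely the hypotheses of Proposition~\ref{prop:subset_in_gallai_edmonds}, whose conclusion is that each $T_i$ is in fact a $\theta$--critical component of $G$ (not merely of $G[S]$). Since $G[S]$ is a forest, each $T_i$ is a tree, and the Hall-type condition on subsets of $\partial_G S$ carries over verbatim, yielding $(b)$.

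For the direction $(b)\Rightarrow(a)$, I would verify each clause in the definition of a refined $\theta$--Aomoto subset. The forest condition is immediate from the $T_i$ being trees. To see that $\theta$ is an eigenvalue of each $T_i$, I would invoke Gallai's Lemma (Theorem~\ref{thm:gallais_lemma}) to obtain $m_\theta(T_i)=1$, and then apply Theorem~\ref{thm:matching_equals_characteristic} to identify $\mu^{T_i}=\phi^{T_i}$ on the tree $T_i$, so that $\theta$ is a zero of $\phi^{T_i}$. The strict inequality $\lvert\partial_G S\rvert<\mathrm{cc}(G[S])$ holds trivially if $\partial_G S=\emptyset$ (since $k\geq 1$), and otherwise follows from the $U=\partial_G S$ instance of the Hall-type hypothesis. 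The remaining two clauses (each component is $\theta$--critical, and the Hall-type condition on subsets of $\partial_G S$) are explicit assumptions in $(b)$.

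Since Proposition~\ref{prop:subset_in_gallai_edmonds} does all the heavy lifting, I do not expect any genuine obstacle; the delicate step---promoting a $\theta$--critical component of $G[S]$ to a $\theta$--critical component of $G$---has already been handled there. The only minor care needed is bookkeeping the edge case $\partial_G S=\emptyset$ when deducing the strict inequality, and observing that ``component of $G[S]$'' and ``tree $T_i$'' refer to the same objects under the hypotheses of either direction.
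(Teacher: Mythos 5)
Your proof is correct and matches the paper's treatment: the paper states that Corollary~\ref{cor:refined_aomoto_in_gallai_edmonds} follows immediately from Proposition~\ref{prop:subset_in_gallai_edmonds}, which is exactly your $(a)\Rightarrow(b)$ direction, and your $(b)\Rightarrow(a)$ direction is the same routine unpacking of Definition~\ref{def:theta_aomoto}. The only point to be aware of is that applying Theorem~\ref{thm:gallais_lemma} (and verifying the ``every component is $\theta$--critical'' clause) uses the standard background fact that a $\theta$--critical component of $G$ is itself $\theta$--critical as a standalone multi-graph, a fact the paper also uses implicitly (for instance when applying Proposition~\ref{prop:subset_in_gallai_edmonds} with $S=0^G_\theta$, or via Proposition~\ref{prop:critical_remove} with a singleton $Z$).
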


This result shows the connection mentioned in Section~\ref{sec:covers} between the $\theta$--Gallai-Edmonds decomposition and refined $\theta$--Aomoto subsets. Corollary~\ref{cor:refined_aomoto_in_gallai_edmonds} provides the main motivation for our approach in this work. Moreover, together with Propositions~\ref{prop:aomoto_implies_refined_aomoto} and~\ref{prop:robust_refined_aomoto}, Corollary~\ref{cor:refined_aomoto_in_gallai_edmonds} yields an alternative proof of Theorem~\ref{thm:aomoto_implies_matching}, as mentioned in Section~\ref{sec:covers}.

We also note that the analogue of Corollary~\ref{cor:refined_aomoto_in_gallai_edmonds} for the characteristic polynomial does not hold, even in the case of graphs, as illustrated by the following example.

\begin{example}\label{exm:charac_fails} Consider the graph $G$ shown in Figure~\ref{fig:graph_charac_fails}, where the weights are specified by the following matrix:
\[
A^G =
\begin{pmatrix}
0 & 1 & 1 & 0 & 0 \\
1 & 0 & 1 & 0 & 0 \\
1 & 1 & 1 & 1 & 1 \\
0 & 0 & 1 & 0 & 1 \\
0 & 0 & 1 & 1 & 0
\end{pmatrix}
\]

Note that $\phi^G(x)=(x+1)^3(x-1)(x-3)$, and  
\[
\phi^{G\setminus 1}(x)=\phi^{G\setminus 2}(x)=\phi^{G\setminus 4}(x)=\phi^{G\setminus 5}(x)= (x+1)^2(x^2-3x+1),
\]  
\noindent while $\phi^{G\setminus 3}(x)=(x+1)^2(x-1)^2$.  

Also, note that $S\defeq\{1,2,4,5\}$ is a refined $(-1)$--Aomoto subset of $G$, with $\partial_G S=\{3\}$. Clearly, for each $i\in S$, the multiplicity of $-1$ as a zero of $\phi^{G\setminus i}(x)$ is strictly smaller than its multiplicity in $\phi^G(x)$. However, in contrast to Corollary~\ref{cor:refined_aomoto_in_gallai_edmonds}, we also observe that the same holds for $i=3\in\partial_G S$.
\end{example}

\begin{figure}[h]
\centering
\begin{tikzpicture}[
    every node/.style={circle, fill=black, inner sep=1.5pt},
    every edge/.style={thick}
]
\node (3) at (0,0) [label=below:{3}] {};  
\node (1) at (-2,1.2) [label=above left:{1}] {};
\node (2) at (-2,-1.2) [label=below left:{2}] {};
\node (4) at (2,1.2) [label=above right:{4}] {};
\node (5) at (2,-1.2) [label=below right:{5}] {};
\draw (1) -- (2) -- (3) -- (1);
\draw (3) -- (4) -- (5) -- (3);
\end{tikzpicture}
\caption{Graph corresponding to Example~\ref{exm:charac_fails}.}
\label{fig:graph_charac_fails}
\end{figure}
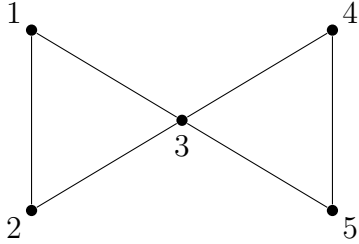

The next result shows that deleting any subset of a $\theta$--critical component of a multi-graph $G$ affects the multiplicity of $\theta$ in the same way for both the $\theta$--critical component and $G$.

\begin{proposition}\label{prop:critical_remove} Let $Z\subseteq V(H)$ for a $\theta$--critical component $H$ of $G$. Then $m_\theta(G\setminus Z)-m_\theta(G)=m_\theta(H\setminus Z)-m_\theta(H)$.
\end{proposition}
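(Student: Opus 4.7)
My plan is to reduce the proposition to the following stability identity: for every subset $Z\subseteq V(H)$ and every $v\in V(H)\setminus Z$,
\[
\alpha_v^{G\setminus Z}(\theta)=\alpha_v^{H\setminus Z}(\theta).
\]
Once this identity is established, every vertex of $V(H)\setminus Z$ has the same $\theta$--Gallai--Edmonds class in $G\setminus Z$ as in $H\setminus Z$, so the desired equality of multiplicities follows by deleting the elements of $Z$ one at a time and invoking Equation~\eqref{eq:vertex_classification}.

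To prove the stability identity, I would induct on $n\defeq|V(H)\setminus Z|$, with the base case $n=0$ being vacuous. For the inductive step, fix $v\in V(H)\setminus Z$ and expand $\alpha_v^{G\setminus Z}(\theta)$ via Lemma~\ref{lem:alpha_recursion}. Since $v\in V(H)$ and $Z\subseteq V(H)$, every neighbor $j$ of $v$ with $j\notin Z$ lies either in $V(H)\setminus Z$ or in $\partial_G H$. For $j\in\partial_G H$, applying Proposition~\ref{prop:subset_in_gallai_edmonds} with $S=0_\theta^G$ (whose second hypothesis is supplied by Theorem~\ref{thm:matched_special}) and the subset $Z\cup\{v\}\subseteq V(H)$ gives $j\in\partial_G 0_\theta^G\subseteq\infty_\theta^{G\setminus(Z\cup\{v\})}$, so that the corresponding summand vanishes. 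For $j\in V(H)\setminus(Z\cup\{v\})$, the inductive hypothesis with $Z\cup\{v\}$ provides $\alpha_j^{G\setminus(Z\cup\{v\})}(\theta)=\alpha_j^{H\setminus(Z\cup\{v\})}(\theta)$. Comparing the resulting expression with the expansion of $\alpha_v^{H\setminus Z}(\theta)$ supplied by Lemma~\ref{lem:alpha_recursion} yields the identity.

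Granted the stability identity, the proposition follows by a second induction on $|Z|$, with the trivial base case $|Z|=0$. For the inductive step, pick any $z\in Z$, set $Z'\defeq Z\setminus\{z\}$, and apply the induction hypothesis to obtain $m_\theta(G\setminus Z')-m_\theta(G)=m_\theta(H\setminus Z')-m_\theta(H)$. The stability identity shows that $z$ has the same $\theta$--Gallai--Edmonds class in $G\setminus Z'$ and in $H\setminus Z'$, so Equation~\eqref{eq:vertex_classification} gives $m_\theta(G\setminus Z)-m_\theta(G\setminus Z')=m_\theta(H\setminus Z)-m_\theta(H\setminus Z')$; summing the two equations yields the conclusion.

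The main obstacle is the stability identity itself, which expresses that the $\theta$--critical component $H$ is effectively decoupled from $G\setminus H$ when computing $\alpha_v(\theta)$ at vertices of $H$. The decoupling mechanism is Proposition~\ref{prop:subset_in_gallai_edmonds}: throughout the inductive computation, the boundary $\partial_G H$ remains inside $\infty_\theta$, which forces the denominators in Lemma~\ref{lem:alpha_recursion} corresponding to edges leaving $H$ to be infinite and thereby annihilates every contribution to $\alpha_v(\theta)$ coming from outside $V(H)$.
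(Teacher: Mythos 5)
Your proposal is correct, but it takes a genuinely different route from the paper. The paper proves the proposition directly by induction on $\lvert\partial_G 0^G_\theta\rvert$: it deletes a vertex $j\in\partial_G 0^G_\theta$, uses Theorem~\ref{thm:stability} so that $H$ remains a $\theta$--critical component of $G\setminus j$ and the induction hypothesis applies, and uses Proposition~\ref{prop:subset_in_gallai_edmonds} to get $j\in\infty^{G\setminus Z}_\theta$ and $j\in\infty^{G}_\theta$, so deleting $j$ shifts $m_\theta(G\setminus Z)$ and $m_\theta(G)$ by the same amount. You instead first establish the pointwise identity $\alpha_v^{G\setminus Z}(\theta)=\alpha_v^{H\setminus Z}(\theta)$ --- which is precisely the paper's later Proposition~\ref{prop:critical_stability} --- by induction on $\lvert V(H)\setminus Z\rvert$ via the recursion of Lemma~\ref{lem:alpha_recursion}, invoking Proposition~\ref{prop:subset_in_gallai_edmonds} with $S=0^G_\theta$ (exactly as the paper does in the remark following that proposition) only to annihilate the terms coming from $\partial_G H$, and then you peel off the vertices of $Z$ one at a time using Equation~\eqref{eq:vertex_classification}. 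In effect you reverse the paper's logical order, deriving Proposition~\ref{prop:critical_remove} from Proposition~\ref{prop:critical_stability}, and you obtain an alternative proof of the latter that avoids both Theorem~\ref{thm:stability} and Lemma~\ref{lem:contraction}; what this costs is a slightly more delicate evaluation of the recursion at $x=\theta$. That is the one point you should make explicit: if some neighbor $j\in V(H)\setminus(Z\cup\{v\})$ has $\alpha_j^{G\setminus(Z\cup\{v\})}(\theta)=0$, the termwise comparison degenerates; but by your induction hypothesis the same $j$ satisfies $\alpha_j^{H\setminus(Z\cup\{v\})}(\theta)=0$, and the standard consequence of Lemma~\ref{lem:alpha_recursion} used in the paper after Lemma~\ref{lem:entry_of_path} (a neighbor in $0_\theta$ of the graph with $v$ deleted forces $v$ into $\infty_\theta$) gives $\alpha_v^{G\setminus Z}(\theta)=\infty=\alpha_v^{H\setminus Z}(\theta)$, so the identity persists; when no such neighbor exists, every term is finite (the $\partial_G H$ terms vanish because their denominators are poles) and the termwise evaluation, hence your comparison, is legitimate. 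With that caveat addressed, both inductions close and the argument is complete.
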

\begin{proof} We proceed by induction on $k\defeq\lvert\partial_G 0^G_\theta\rvert$.

If $k = 0$, then the $\theta$–critical components of $G$ are isolated, and
the result follows immediately. Now, assume $k\geq 1$ and, by induction hypothesis, that the statement holds for any multi-graph $\tilde{G}$ with $\lvert\partial_{\tilde{G}} 0^{\tilde{G}}_\theta\rvert < k$. 

Consider $j\in\partial_G 0^G_\theta$. By Theorem~\ref{thm:stability} and the induction hypothesis, we have $m_\theta((G\setminus j)\setminus Z)-m_\theta(G\setminus j)=m_\theta(H\setminus Z)-m_\theta(H)$. On the other hand, by Proposition~\ref{prop:subset_in_gallai_edmonds}, since $Z$ is contained in a $\theta$--critical component of $G$ and $0^G_\theta$ satisfies the hypotheses of the proposition, we have $j\in\infty^{G\setminus Z}_\theta$, which implies $m_\theta((G\setminus Z)\setminus j)=m_\theta(G\setminus Z)+1$. Also note that $m_\theta(G\setminus j)=m_\theta(G)+1$, since $j\in\partial_G 0^G_\theta\subseteq\infty^G_\theta$.

Combining these equalities, we obtain 
\begin{align*}
m_\theta(G \setminus Z)-m_\theta(G)&=m_\theta((G\setminus Z) \setminus j)-m_\theta(G \setminus j)\\&=m_\theta((G\setminus j) \setminus Z)-m_\theta(G \setminus j)\\&=m_\theta(H\setminus Z)-m_\theta(H).  
\end{align*}

This completes the induction step and the proof.
\end{proof}

The next corollary shows that $\theta$--critical cycles in a multi-graph correspond to $\theta$--critical cycles within its $\theta$--critical components.

\begin{corollary}\label{cor:critical_in_comp_equiv_critical} Let $C$ be a cycle in $G$. Then $C$ is a $\theta$--critical cycle of $G$ if and only if $C$ is a $\theta$--critical cycle of some $\theta$--critical component $H$ of $G$.
\end{corollary}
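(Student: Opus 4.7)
The plan is to apply Proposition~\ref{prop:critical_remove} directly, taking $Z = V(C)$, since this proposition is exactly the tool needed to transfer multiplicity drops between $G$ and any $\theta$--critical component containing $C$.

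For the forward direction, assume $C$ is a $\theta$--critical cycle of $G$, so $m_\theta(G \setminus C) = m_\theta(G) - 1$. By Corollary~\ref{cor:critical_cycle}, the fact that equality is attained forces $C$ to be contained in some $\theta$--critical component $H$ of $G$; in particular $V(C) \subseteq V(H)$. Applying Proposition~\ref{prop:critical_remove} with $Z = V(C)$ gives
\[
m_\theta(H \setminus C) - m_\theta(H) = m_\theta(G \setminus C) - m_\theta(G) = -1,
\]
so $m_\theta(H \setminus C) = m_\theta(H) - 1$, i.e.\ $C$ is a $\theta$--critical cycle of $H$.

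For the converse, suppose $C$ is a $\theta$--critical cycle of some $\theta$--critical component $H$ of $G$. Then $V(C) \subseteq V(H)$ and $m_\theta(H \setminus C) - m_\theta(H) = -1$. Again by Proposition~\ref{prop:critical_remove} with $Z = V(C)$,
\[
m_\theta(G \setminus C) - m_\theta(G) = m_\theta(H \setminus C) - m_\theta(H) = -1,
\]
so $m_\theta(G \setminus C) = m_\theta(G) - 1$, and $C$ is a $\theta$--critical cycle of $G$.

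There is no real obstacle here: the corollary is essentially a repackaging of Proposition~\ref{prop:critical_remove} in the special case $Z = V(C)$, combined with the observation (from Corollary~\ref{cor:critical_cycle}) that a $\theta$--critical cycle of $G$ is automatically contained in a $\theta$--critical component. The only minor care needed is to verify the containment $V(C) \subseteq V(H)$ before invoking Proposition~\ref{prop:critical_remove}, which in the forward direction is supplied by Corollary~\ref{cor:critical_cycle} and in the backward direction is part of the hypothesis.
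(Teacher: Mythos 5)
Your proof is correct and follows essentially the same route as the paper: both directions are obtained by applying Proposition~\ref{prop:critical_remove} with $Z=V(C)$, using Corollary~\ref{cor:critical_cycle} to locate $C$ inside a $\theta$--critical component in the forward direction. The only difference is the order in which the two implications are presented.
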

\begin{proof} Assume that $C$ is a $\theta$--critical cycle of a $\theta$--critical component $H$ of $G$. Then, by Proposition~\ref{prop:critical_remove}, we have
\[
m_\theta(G \setminus C) - m_\theta(G) = m_\theta(H \setminus C) - m_\theta(H) = -1,
\]  
\noindent so $C$ is a $\theta$--critical cycle of $G$. This proves one direction of the statement.

Now assume that $C$ is a $\theta$--critical cycle of $G$. Then, by Corollary~\ref{cor:critical_cycle}, $C$ is contained in some $\theta$--critical component $H$ of $G$. Once again, by Proposition~\ref{prop:critical_remove}, we have
\[
m_\theta(H\setminus C)-m_\theta(H)=m_\theta(G \setminus C)-m_\theta(G)=-1,
\]  
\noindent so $C$ is a $\theta$--critical cycle of $H$. This completes the proof.
\end{proof}

We now state one final result needed for the proof of Theorem~\ref{thm:main_theorem_technical}. It can be seen as an analogue of Theorem~\ref{thm:stability} for the removal of vertices in $\theta$–critical components, and it also complements Proposition~\ref{prop:critical_remove}.

\begin{proposition}\label{prop:critical_stability} Let $Z\subseteq V(H)$ for a $\theta$--critical component $H$ of $G$. Then $\alpha_i^{G\setminus Z}(\theta)=\alpha_i^{H\setminus Z}(\theta)$ for every $i\in V(H\setminus Z)$.
\end{proposition}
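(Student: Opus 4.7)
The plan is to prove Proposition~\ref{prop:critical_stability} by induction on $n = \lvert V(H\setminus Z)\rvert$, reducing the value of $\alpha_i^{G\setminus Z}(\theta)$ at $i\in V(H\setminus Z)$ to instances involving the neighbors of $i$ after adjoining $i$ to the deleted set. The base case $n = 0$ is vacuous.

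For the inductive step, I would apply Lemma~\ref{lem:alpha_recursion} to $\alpha_i^{G\setminus Z}(\theta)$, writing it as
\[
\alpha_i^{G\setminus Z}(\theta) = \theta - r_i + \sum_{\substack{v\sim i\\ \text{in } G\setminus Z}}\;\sum_{e\in E_{G\setminus Z}(i,v)}\frac{\lambda_e}{\alpha_v^{G\setminus (Z\cup\{i\})}(\theta)},
\]
and split the sum according to whether $v\in V(H\setminus Z)$ or $v\in V(G)\setminus V(H)$. For $v\in V(H\setminus Z)$ (so automatically $v\neq i$), the set $Z' = Z\cup\{i\}$ still lies in $V(H)$ and satisfies $\lvert V(H\setminus Z')\rvert = n-1$, so the inductive hypothesis gives $\alpha_v^{G\setminus (Z\cup\{i\})}(\theta) = \alpha_v^{(H\setminus Z)\setminus i}(\theta)$. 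Since $H = G[V(H)]$ is an induced subgraph, the adjacencies and edge-multiplicities between $i$ and $v$ coincide in $G\setminus Z$ and in $H\setminus Z$, so these inner terms recombine, via Lemma~\ref{lem:alpha_recursion} applied in $H\setminus Z$ at $i$, into the recursion for $\alpha_i^{H\setminus Z}(\theta)$.

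The main obstacle is to dispose of the outer neighbors. Any $v\in V(G)\setminus V(H)$ adjacent to $i$ lies in $\partial_G V(H)$, and since $V(H)\subseteq 0_\theta^G$ while $v\notin 0_\theta^G$ (otherwise $v$ would sit in the $\theta$--critical component $H$), we obtain $v\in\partial_G 0_\theta^G$. I would then invoke Proposition~\ref{prop:subset_in_gallai_edmonds} with $S = 0_\theta^G$ -- whose hypotheses are satisfied by the definition of $\theta$--critical components together with Theorem~\ref{thm:matched_special} -- applied to the subset $Z\cup\{i\}\subseteq V(H)$, which is indeed contained in the vertex set of a single component of $G[S]$. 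This yields $\partial_G 0_\theta^G \subseteq \infty_\theta^{G\setminus (Z\cup\{i\})}$, so $\alpha_v^{G\setminus (Z\cup\{i\})}(\theta) = \infty$ and every outer contribution $\lambda_e/\alpha_v^{G\setminus (Z\cup\{i\})}(\theta)$ vanishes. Combining this with the recombination of the inner terms yields $\alpha_i^{G\setminus Z}(\theta) = \alpha_i^{H\setminus Z}(\theta)$, completing the induction.
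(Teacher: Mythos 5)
Your argument is correct, but it takes a genuinely different route from the paper's. The paper proves Proposition~\ref{prop:critical_stability} by induction on $\lvert\partial_G 0_\theta^G\rvert$: it deletes a frontier vertex $j\in\partial_G 0_\theta^G$, invokes Theorem~\ref{thm:stability} so that $H$ remains a $\theta$--critical component of $G\setminus j$ and the induction hypothesis applies, and then uses Lemma~\ref{lem:contraction} together with $j\in\infty_\theta^{G\setminus Z}$ and $j\in\infty_\theta^{G\setminus (Z\cup\{i\})}$ (both supplied by Proposition~\ref{prop:subset_in_gallai_edmonds}) to conclude that deleting $j$ does not change $\alpha_i$ at $\theta$. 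You instead induct on $\lvert V(H\setminus Z)\rvert$, expand $\alpha_i^{G\setminus Z}$ via Lemma~\ref{lem:alpha_recursion}, and annihilate the contributions of neighbours outside $V(H)$ by the same application of Proposition~\ref{prop:subset_in_gallai_edmonds} with $S=0_\theta^G$ (which the paper itself licenses right after that proposition; note only that the hypothesis that the $\theta$--critical components are $\theta$--critical multi-graphs is not literally the definition but comes from Theorem~\ref{thm:stability}). This avoids Lemma~\ref{lem:contraction} and, in the inductive step, Theorem~\ref{thm:stability}, so it is arguably more self-contained; the price is that you evaluate the rational identity of Lemma~\ref{lem:alpha_recursion} at $x=\theta$ term by term, which needs one extra sentence in the case where some inner denominator $\alpha_v^{G\setminus(Z\cup\{i\})}(\theta)$ vanishes. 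In that case the term-by-term reading breaks down, but the standard consequence of Lemma~\ref{lem:alpha_recursion} recorded after Lemma~\ref{lem:entry_of_path} (a neighbour lying in $0_\theta$ of the vertex-deleted multi-graph forces membership in $\infty_\theta$) gives $\alpha_i^{G\setminus Z}(\theta)=\infty$, and since your induction hypothesis transfers that same vertex $v$ into $0_\theta^{H\setminus(Z\cup\{i\})}$, it likewise gives $\alpha_i^{H\setminus Z}(\theta)=\infty$, so equality still holds; when no inner denominator vanishes, all terms are finite and your recombination is valid. With that small case distinction added, your proof is complete, whereas the paper's frontier-deletion induction has the advantage of running in parallel with the proof of Proposition~\ref{prop:critical_remove} and reusing the stability machinery already in place.
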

\begin{proof} We proceed by induction on $k\defeq\lvert\partial_G 0^G_\theta\rvert$. Let $i\in V(H\setminus Z)$.

If $k = 0$, then, since $H$ is a component of $G$ and $\alpha_i^{G\setminus Z}(x)$ depends only on the component of $G\setminus Z$ containing $i$, the result follows immediately. Now assume $k\geq 1$ and that the statement holds for all multi-graphs $\tilde{G}$ with $\lvert\partial_{\tilde{G}} 0^{\tilde{G}}_\theta\rvert <k$.

Consider $j\in\partial_G 0^G_\theta$ and $G\setminus j$. By Theorem~\ref{thm:stability}, $H$ is a $\theta$--critical component of $G\setminus j$. Since $\lvert\partial_{G\setminus j} 0^{G\setminus j}_\theta\rvert < k$, by the induction hypothesis, $\alpha_i^{G\setminus (Z\cup\{j\})}(\theta)=\alpha_i^{(G\setminus j)\setminus Z}(\theta) =\alpha_i^{H\setminus Z}(\theta)$.

Moreover, by Theorem~\ref{thm:matched_special} and Proposition~\ref{prop:subset_in_gallai_edmonds}, since $Z\cup\{i\}\subseteq V(H)$ and $H$ is a $\theta$--critical component of $G$, we have $\partial_G 0^G_\theta\subseteq\infty^{G\setminus Z}_\theta$ and $\partial_G 0^G_\theta\subseteq\infty^{G\setminus (Z\cup\{i\})}_\theta$. In particular, $j\in\infty^{G\setminus Z}_\theta$ and $j\in\infty^{G\setminus (Z\cup\{i\})}_\theta$. By Lemma~\ref{lem:contraction} (or~\cite[Prop. 20--24 or Fig. 5]{spier2023refined}), as $j\in\infty^{G\setminus Z}_\theta$ and $j\in\infty^{G\setminus (Z\cup\{i\})}_\theta$, it follows that $\alpha_i^{G\setminus Z}(\theta) =\alpha_i^{G\setminus (Z\cup\{j\})}(\theta)$, and therefore $\alpha_i^{G\setminus Z}(\theta) =\alpha_i^{H\setminus Z}(\theta)$. This completes the induction step and the proof.
\end{proof}

Finally, we note that the results presented in this section are instances of more general statements, to which we intend to return in future work.


\section{Proof of main results}\label{sec:proof_of_main_results}\

In this section, we prove Theorems~\ref{thm:main_theorem_technical} and~\ref{thm:density_of_states_refined}.

We begin with Theorem~\ref{thm:main_theorem_technical}, starting with the case of a connected $\theta$–critical multi-graph $G$. By Proposition~\ref{prop:aomoto_implies_refined_aomoto} and Corollary~\ref{cor:refined_aomoto_in_gallai_edmonds}, such a multi-graph $G$ has a $\theta$--Aomoto subset if and only if it is a $\theta$--critical tree. The following result provides a converse to Corollary~\ref{cor:critical_cycle} for such a multi-graph $G$.

\begin{theorem}\label{thm:theta_critical_cycle} Let $G$ be a connected $\theta$--critical multi-graph that is not a tree. Then $G$ has a $\theta$--critical cycle. 
\end{theorem}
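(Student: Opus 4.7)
My plan is to perform a case analysis on the structure of $G$, applying Lemma \ref{lem:mult1_implies_critical_path} (which is available between any two distinct vertices of $G$ because, by Gallai's Lemma (Theorem \ref{thm:gallais_lemma}) and $\theta$-criticality, $m_\theta(G) = 1$ and $V(G) = 0_\theta^G$) to produce a $\theta$-critical path between adjacent vertices that can be closed into a $\theta$-critical cycle using an edge of $G$.

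If $G$ has a loop at some vertex $i$, the loop is a $1$-cycle $C$ with $V(C) = \{i\}$, and $i \in 0_\theta^G$ gives $m_\theta(G \setminus V(C)) = m_\theta(G) - 1 = 0$. Otherwise, if there is a multi-edge $|E_G(u, v)| \geq 2$, I would apply Lemma \ref{lem:mult1_implies_critical_path} to obtain a $\theta$-critical path $P: u \to v$: when $P$ has length $\geq 2$, $P$ uses no edge of $E_G(u, v)$, so for any $e \in E_G(u, v)$ the cycle $C = P + e$ satisfies $V(C) = V(P)$ and is $\theta$-critical, and when $P$ is a single edge $e_0$, the $2$-cycle formed by $e_0$ and any parallel edge $e_1 \in E_G(u, v) \setminus \{e_0\}$ has $V(C) = \{u, v\} = V(P)$ and is again $\theta$-critical. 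If $G$ is simple, I would pick an edge $e = uv$ lying on a cycle (which exists as $G$ is not a tree) and apply Lemma \ref{lem:mult1_implies_critical_path} to $u, v$; if some $\theta$-critical path from $u$ to $v$ has length $\geq 2$, closing it with $e$ produces a $\theta$-critical cycle.

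The delicate sub-case is when $G$ is simple and, for every adjacent pair $(u, v)$ of $G$, the only $\theta$-critical path between them is the direct edge $uv$. My plan here is induction on $|V(G)|$. The sub-case hypothesis forces $m_\theta(G \setminus V(C)) \geq 1$ for every cycle $C$ of $G$ (since deleting the $uv$-edge from $C$ leaves a non-critical $u \to v$ path of length $\geq 2$), so $G \setminus V(C)$ has a $\theta$-critical component $H$. If $H$ is not a tree, the inductive hypothesis supplies a $\theta$-critical cycle $C^*$ in $H$, and I would use Propositions \ref{prop:critical_remove} and \ref{prop:critical_stability} to control $m_\theta(G \setminus V(C^*))$ and argue that $C^*$ is $\theta$-critical in $G$. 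If instead every such component $H$ is a tree, I would collect the $\theta$-critical tree components of $G \setminus V(C)$ into a candidate set $S$ and, following the refinement procedure in the proof of Proposition \ref{prop:aomoto_implies_refined_aomoto}, extract a $\theta$-Aomoto subset of $G$; this contradicts the observation (via Corollary \ref{cor:refined_aomoto_in_gallai_edmonds}) that a connected $\theta$-critical multi-graph which is not a tree admits no $\theta$-Aomoto subset, since its sole $\theta$-critical component is $G$ itself.

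The main obstacle will be this last sub-case: both lifting a $\theta$-critical cycle from a sub-multi-graph of $G \setminus V(C)$ back to $G$ (which requires carefully tracking the effect of restoring the vertices of $V(C)$ on $m_\theta$), and verifying the neighborhood inequality of Definition \ref{def:theta_aomoto} for the candidate subset assembled from tree components. Under the sub-case hypothesis, the edge-deletion identity $0 = \mu^G(\theta) = \mu^{G-e}(\theta) + \lambda_e \mu^{G \setminus \{u,v\}}(\theta)$ forces $m_\theta(G - e) = 0$ for every edge $e$; combined with Proposition \ref{prop:subset_in_gallai_edmonds}, this should provide the structural leverage required to complete the argument.
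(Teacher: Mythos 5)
Your reductions for loops, parallel edges, and the case where some adjacent pair $u,v$ admits a $\theta$--critical path of length at least $2$ are fine (each of these closes up to a cycle $C$ with $V(C)=V(P)$, so $m_\theta(G\setminus C)=0$). The genuine gap is the remaining sub-case, which is exactly the hard core of the theorem, and your plan for it does not go through as stated. First, the lifting step: if $C^*$ is a $\theta$--critical cycle of a $\theta$--critical component $H$ of $G\setminus C$, Proposition~\ref{prop:critical_remove} (applied to the multi-graph $G\setminus C$, whose critical component $H$ is) only yields $m_\theta\bigl((G\setminus C)\setminus C^*\bigr)=m_\theta(G\setminus C)-1$; it says nothing about $m_\theta(G\setminus C^*)$, because $H$ is not a $\theta$--critical component of $G$ (the unique such component is $G$ itself, for which the proposition is vacuous). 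Restoring the vertices of $C$ can raise the multiplicity back, and Corollary~\ref{cor:critical_cycle} only gives $m_\theta(G\setminus C^*)\geq 0$, so ``$C^*$ is $\theta$--critical in $G$'' is unproved. Second, the tree branch: collecting the $\theta$--critical tree components of $G\setminus C$ gives, at best, a refined $\theta$--Aomoto subset $S$ of $G\setminus C$; to reach a contradiction you need $\lvert\partial_G S\rvert<\mathrm{cc}(G[S])$ \emph{in $G$}, and the vertices of $C$ adjacent to $S$ can destroy this inequality. Controlling that boundary is precisely what Proposition~\ref{prop:removing_critical_cycle} accomplishes, but only under the hypothesis that $C$ is already known to be $\theta$--critical --- which is what you are trying to prove, so the argument is circular at this point. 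The closing observation that $m_\theta(G-e)=0$ for every edge $e$ is correct under your sub-case hypothesis, but it is not developed into an argument.

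For comparison, the paper's proof handles this case by a different mechanism: traverse the cycle $C$ as a path $P$ starting at a vertex of $0_\theta^G$; since $m_\theta(G\setminus C)\geq 1$ and $m_\theta(G)=1$, Lemma~\ref{lem:entry_of_path} gives $W_\theta(P)\geq 0$, so by Lemma~\ref{lem:entry_of_path_start_critical} there is a path with $W_\theta=0$. Taking a minimum-length such path $\hat P: i_1,\dots,i_k$, both $i_k\in\infty_\theta^{G\setminus\{i_1,\dots,i_{k-1}\}}$ and (reversing) $i_1\in\infty_\theta^{G\setminus\{i_k,\dots,i_2\}}$, so Lemma~\ref{lem:alpha_recursion} produces neighbors $u,v\in 0_\theta^{G\setminus\hat P}$ of $i_1$ and $i_k$; since $m_\theta(G\setminus\hat P)=1$, Lemma~\ref{lem:mult1_implies_critical_path} gives a $\theta$--critical path $Q:v\to u$ in $G\setminus\hat P$, and concatenating $\hat P$, the edge $i_kv$, $Q$, and the edge $ui_1$ yields a cycle $\hat C$ with $m_\theta(G\setminus\hat C)=0$. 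Some argument of this kind (working inside $G\setminus\hat P$ rather than trying to lift criticality across the deletion of a whole cycle) is what your sub-case is missing.
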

\begin{proof} Note that, by Theorem~\ref{thm:gallais_lemma}, we have $m_\theta(G) = 1$.
Since $G$ is not a tree, it contains at least one cycle $C$, which may consist of a loop or a pair of parallel edges. If $m_\theta(G\setminus C) = 0$, then we are done, so assume that $m_\theta(G\setminus C)\geq 1$. 

We claim that there exists some path $\tilde{P}$ of $G$ with $W_\theta(\tilde{P})=0$ (as defined before Lemma~\ref{lem:entry_of_path}). Let $P$ be any path obtained from $C$ by starting at a vertex $i$ of $C$ and traversing the cycle in one of the two directions. Since $P$ and $C$ contain the same vertices, we have $m_\theta(G\setminus P) = m_\theta(G\setminus C)\geq 1$. By Theorem~\ref{thm:gallais_lemma}, $m_\theta(G) = 1$, and thus by Lemma~\ref{lem:entry_of_path} it follows that $W_\theta(P)\geq 0$. Hence, as $i\in 0^G_\theta$, by Lemma~\ref{lem:entry_of_path_start_critical}, there exists some sub-path $\tilde{P}$ of $P$ with $W_\theta(\tilde{P})=0$, proving our claim.

Let $\hat{P}:\, i_1,\dots, i_k$ be a path in $G$ of minimum length with $W_\theta(\hat{P}) = 0$. Since $i_1\in 0_\theta^G$, it follows that $k\geq 2$ and $i_1\neq i_k$. We claim that $i_k\in\infty^{G\setminus\{i_1,\dots, i_{k-1}\}}$ and $i_1\in\infty^{G\setminus\{i_k,\dots, i_2\}}$. Indeed, by Lemma~\ref{lem:entry_of_path_start_critical}, since $i_1\in 0_\theta^G$ and there is no sub-path $P'$ of $\hat{P}$ with $W_\theta(P') = 0$, it follows that $i_k\in\infty^{G\setminus\{i_1,\dots, i_{k-1}\}}$. Similarly, by considering the reverse path $\hat{P}^{-1}:\, i_k,\dots, i_1$, we conclude that $i_1\in\infty^{G\setminus\{i_k,\dots, i_2\}}$, proving the claim.

Since $i_k\in\infty^{G\setminus\{i_1,\dots, i_{k-1}\}}$ and $i_1\in\infty^{G\setminus\{i_k,\dots, i_2\}}$, Lemma~\ref{lem:alpha_recursion} (see also~\cite[Lem. 15]{spier2023refined} or~\cite[Lem. 3.4]{godsil1995algebraic}) implies that $i_1$ and $i_k$ are adjacent in $G$ to vertices $u$ and $v$ in $0_\theta^{G\setminus\hat{P}}$, respectively, possibly with $u=v$. As $W_\theta(\hat{P})=0$ and $m_\theta(G)=1$, we have, by Lemma~\ref{lem:entry_of_path}, $m_\theta(G\setminus\hat{P})=1$. Thus, since $u, v\in 0^{G\setminus\hat{P}}_\theta$, Lemma~\ref{lem:mult1_implies_critical_path} implies the existence of a $\theta$--critical path $Q: v\to u$ in $G\setminus\hat{P}$. 

Let $\hat{C}$ be the cycle obtained by concatenating $\hat{P}$, the edge $i_k v$, the path $Q$, and the edge $u i_1$. Since $i_1\neq i_k$, $\hat{C}$ is indeed a cycle, even if $u=v$. We then have  
\[
m_\theta(G\setminus\hat{C}) = m_\theta((G\setminus\hat{P})\setminus Q) = m_\theta(G\setminus\hat{P}) - 1 = 0,
\]
\noindent so $\hat{C}$ is a $\theta$--critical cycle of $G$.
\end{proof}

Before moving forward, we mention what Theorem~\ref{thm:theta_critical_cycle} corresponds to in the classical setting, where the multi-graph is a graph, vertex and arc weights are $0$ and $1$, respectively, and $\theta = 0$. In this case, a connected $\theta$--critical graph is exactly a \emph{factor--critical} graph~\cite[p. 89]{lovasz1986matching}. This follows from Gallai’s Lemma~\cite[Thm. 3.1.13]{lovasz1986matching} (see also Theorem~\ref{thm:gallais_lemma}). In a factor-critical graph, the deletion of any vertex leaves a graph that admits a perfect matching.

The only bipartite factor--critical graph is the single-vertex graph $K_1$~\cite[p. 89]{lovasz1986matching} (assuming that the empty graph has a perfect matching). In particular, the only factor--critical tree is $K_1$.

In this classical setting, Theorem~\ref{thm:theta_critical_cycle} states that for every factor-critical graph $G$ that is not $K_1$, there exists a cycle $C$ such that $G\setminus C$ has a perfect matching. In this case, the following stronger statement is true.

\begin{lemma}\label{lemma:factor_critical_case} Assume that $G$ is factor-critical and let $i$ and $j$ be neighbors in $G$. Then, there exists an odd cycle $C$ containing $i$ and $j$ such that $G\setminus C$ has a perfect matching.
\end{lemma}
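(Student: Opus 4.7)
The plan is to exploit the two perfect matchings that factor-criticality supplies: a perfect matching $M$ of $G\setminus i$ and a perfect matching $N$ of $G\setminus j$. Neither matching contains the edge $ij$, since $i\notin V(M)$ and $j\notin V(N)$. The desired cycle will arise from the symmetric difference $M\triangle N$.

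In $M\triangle N$, every vertex has degree at most $2$, so it decomposes into simple paths and even cycles. The vertex $i$ has degree $1$ (via its $N$-edge) and so does $j$ (via its $M$-edge), while every other vertex has degree $0$ or $2$. Hence there is a unique path $P$ in $M\triangle N$ joining $i$ and $j$. At each interior vertex the two incident edges come one from $M$ and one from $N$, so $P$ alternates; it begins with an $N$-edge and ends with an $M$-edge, so $P$ has even length and $|V(P)|$ is odd. Since $ij$ lies in neither $M$ nor $N$, it does not lie in $P$, so $C\defeq P\cup\{ij\}$ is a simple odd cycle through $i$ and $j$.

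It remains to exhibit a perfect matching of $G\setminus C$. The $M$-edges occurring in $P$ sit in its even positions and between them pair up every vertex of $V(P)\setminus\{i\}=V(C)\setminus\{i\}$; because $i\notin V(M)$ to begin with, every $M$-edge outside $P$ must have both endpoints in $V(G)\setminus V(C)$. Therefore $M\setminus E(P)$ is a perfect matching of $G\setminus C$. The only delicate step is the parity argument for $|V(P)|$, which depends on correctly identifying the first and last edges of $P$ as belonging to $N$ and $M$ respectively, together with the observation that $ij\notin E(P)$ so that $C$ is genuinely a cycle; both facts follow immediately from $i\notin V(M)$ and $j\notin V(N)$.
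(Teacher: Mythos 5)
Your proof is correct and follows essentially the same route as the paper: the paper also takes perfect matchings of $G\setminus i$ and $G\setminus j$, extracts the alternating path joining $i$ and $j$ (necessarily of even length), and closes it with the edge $ij$ to get the odd cycle. You merely spell out the details the paper leaves implicit, namely the symmetric-difference argument producing the path and the explicit perfect matching $M\setminus E(P)$ of $G\setminus C$.
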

\begin{proof} The proof is similar to that of~\cite[Thm. 5.5.1]{lovasz1986matching}. Since $G$ is factor-critical, there exist perfect matchings $M_i$ and $M_j$ of $G\setminus i$ and $G\setminus j$, respectively. There must be an $M_i$--$M_j$ alternating path $P$ connecting $i$ and $j$, and its length is necessarily even. Let $C$ be the cycle formed by $P$ followed by the edge $ji$. Then, $C$ is odd and $G\setminus C$ has a perfect matching. 
\end{proof}

The following result provides the main mechanism for the proof of Theorem~\ref{thm:main_theorem_technical}.

\begin{proposition}\label{prop:removing_critical_cycle} If $G$ has no $\theta$--Aomoto subset and $C$ is a $\theta$--critical cycle of $G$, then $G\setminus C$ has no $\theta$--Aomoto subset.
\end{proposition}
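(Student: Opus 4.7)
The plan is to argue by contradiction: assume $G\setminus C$ has a $\theta$-Aomoto subset, and use Proposition~\ref{prop:aomoto_implies_refined_aomoto} to replace it by a refined one $S$. By Corollary~\ref{cor:refined_aomoto_in_gallai_edmonds}, $S=T_1\sqcup\cdots\sqcup T_k$ is a disjoint union of $\theta$-critical components of $G\setminus C$ that are trees, with $|\partial_{G\setminus C}S|<k$. I will show that $S$, viewed as a subset of $V(G)$, remains a $\theta$-Aomoto subset of $G$, contradicting the hypothesis on $G$.

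First, I would locate $C$. Corollary~\ref{cor:critical_in_comp_equiv_critical} places $C$ inside a $\theta$-critical component $H$ of $G$, so $V(C)\subseteq V(H)$. Theorem~\ref{thm:gallais_lemma} gives $m_\theta(H)=1$, and Proposition~\ref{prop:critical_remove} then yields $m_\theta(H\setminus C)=0$. This forces $0_\theta^{H\setminus C}=\emptyset$, and Proposition~\ref{prop:critical_stability} applied with $Z=V(C)$ propagates this to $G\setminus C$: for every $i\in V(H\setminus C)$, $\alpha_i^{G\setminus C}(\theta)=\alpha_i^{H\setminus C}(\theta)\neq 0$. Together with the trivial fact that $V(C)\cap 0_\theta^{G\setminus C}=\emptyset$, this gives $V(H)\cap 0_\theta^{G\setminus C}=\emptyset$, and since $S\subseteq 0_\theta^{G\setminus C}$ I conclude $S\cap V(H)=\emptyset$.

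The crucial step is to upgrade $S\subseteq 0_\theta^{G\setminus C}$ to $S\subseteq 0_\theta^G$; this is where the $\theta$-criticality of $C$ in $G$ is used nontrivially and is the point I expect to be the most delicate. Fix $s\in S$. Then $s\in 0_\theta^{G\setminus C}$ and the $\theta$-criticality of $C$ give
\[
m_\theta((G\setminus C)\setminus s)=m_\theta(G\setminus C)-1=m_\theta(G)-2.
\]
Because $s\in V(G)\setminus V(H)$ and $V(C)\subseteq V(H)$, the cycle $C$ survives in $G\setminus s$, so Corollary~\ref{cor:critical_cycle} applied to the pair $(G\setminus s,C)$ yields $m_\theta((G\setminus s)\setminus C)\geq m_\theta(G\setminus s)-1$. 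Using $(G\setminus s)\setminus C=(G\setminus C)\setminus s$ and combining the two bounds gives $m_\theta(G\setminus s)\leq m_\theta(G)-1$, and Theorem~\ref{thm:Heilmann-Lieb} then forces equality, that is, $s\in 0_\theta^G$.

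With $S\subseteq 0_\theta^G$ in hand the rest is almost formal. If some $v\in V(C)\cap\partial_G S$ existed, then $v$ would be adjacent in $G$ to some $s\in S$, with $v,s\in 0_\theta^G$; the edge $vs$ would then lie in $G[0_\theta^G]$, placing $v$ and $s$ in a single $\theta$-critical component of $G$. Since $v\in V(H)$, that component is $H$, which would force $s\in V(H)$ and contradict $S\cap V(H)=\emptyset$. Therefore $\partial_G S=\partial_{G\setminus C}S$, so $|\partial_G S|<k=\mathrm{cc}(G[S])$, and $S$ is a $\theta$-Aomoto subset of $G$. This contradicts the hypothesis on $G$ and completes the proof.
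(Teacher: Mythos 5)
Your proof is correct, and while it shares the paper's overall skeleton -- argue by contradiction, refine the Aomoto subset of $G\setminus C$ via Proposition~\ref{prop:aomoto_implies_refined_aomoto}, place $C$ inside a $\theta$--critical component $H$ of $G$ via Corollary~\ref{cor:critical_in_comp_equiv_critical}, and exploit $m_\theta(H\setminus C)=0$ together with Proposition~\ref{prop:critical_stability} -- the mechanism of the contradiction is different. The paper shows $C$ must be adjacent to $S$ and then uses Proposition~\ref{prop:subset_in_gallai_edmonds} to force a vertex of $S$ into $V(H\setminus C)$, where $\alpha_i^{H\setminus C}(\theta)=\alpha_i^{G\setminus C}(\theta)=0$ contradicts $m_\theta(H\setminus C)=0$ at once. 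You instead use stability to get $S\cap V(H)=\emptyset$, then prove the stronger intermediate fact $S\subseteq 0_\theta^{G}$ (in effect $0_\theta^{G\setminus C}\subseteq 0_\theta^{G}$ whenever $C$ is a $\theta$--critical cycle) by combining $m_\theta((G\setminus C)\setminus s)=m_\theta(G)-2$ with Corollary~\ref{cor:critical_cycle} applied in $G\setminus s$ and Heilmann--Lieb interlacing; this lets you rule out $V(C)\cap\partial_G S$ (since an edge $vs$ inside $G[0_\theta^G]$ would drag $s$ into $H$), so $S$ transfers back verbatim as a $\theta$--Aomoto subset of $G$, contradicting the hypothesis directly. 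Your route avoids a direct appeal to the technical Proposition~\ref{prop:subset_in_gallai_edmonds} (it still enters implicitly through Corollary~\ref{cor:refined_aomoto_in_gallai_edmonds} and Proposition~\ref{prop:critical_stability}), at the cost of the extra interlacing step; the inclusion $0_\theta^{G\setminus C}\subseteq 0_\theta^{G}$ for a $\theta$--critical cycle is a clean, reusable byproduct. Two small points you use implicitly and could state: $v\in 0_\theta^G$ because $V(H)\subseteq 0_\theta^G$ ($H$ being a component of $G[0_\theta^G]$), and the eigenvalue condition on the components of $G[S]$ transfers because $G[S]=(G\setminus C)[S]$ once $S\cap V(C)=\emptyset$.
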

\begin{proof} Assume, by contradiction, that $G\setminus C$ has a $\theta$--Aomoto subset. By Proposition~\ref{prop:aomoto_implies_refined_aomoto}, $G\setminus C$ then has a refined $\theta$--Aomoto subset $S$. If $C$ is not adjacent to $S$ in $G$, then $S$ would also be a refined $\theta$--Aomoto subset of $G$, which is impossible. Hence, $C$ must be adjacent to $S$ in $G$.  

By Corollary~\ref{cor:critical_in_comp_equiv_critical}, $C$ is contained in a $\theta$--critical component $H$ of $G$. Thus at least one vertex of $S$ lies in $V(H \setminus C) \cup \partial_G H$. By Proposition~\ref{prop:subset_in_gallai_edmonds}, we know that $\partial_G H\subseteq\partial_G 0^G_\theta\subseteq\infty^{G\setminus C}_\theta$. But by Corollary~\ref{cor:refined_aomoto_in_gallai_edmonds}, we have $S\subseteq 0^{G\setminus C}_\theta$, so $S\cap\partial_G H =\emptyset$. It follows that there exists a vertex $i\in S\cap V(H\setminus C)$. 

Since $i\in S\subseteq 0^{G\setminus C}_\theta$, we have $\alpha_i^{G\setminus C}(\theta) = 0$. By Proposition~\ref{prop:critical_stability}, it follows that $\alpha_i^{H\setminus C}(\theta) =\alpha_i^{G\setminus C}(\theta)=0$, that is, $i\in 0^{H\setminus C}_\theta$. This is a contradiction, since Corollary~\ref{cor:critical_in_comp_equiv_critical} implies that $C$ is also a $\theta$--critical cycle of $H$, and hence $m_\theta(H\setminus C)=0$.
\end{proof}

Now we are ready for the proof of our main result.

\begin{proof}[Proof of Theorem~\ref{thm:main_theorem}] Let $k\defeq m_\theta(G)$. If $k=0$, there is nothing to prove. Therefore, assume $k\geq 1$ and, by induction hypothesis, that the statement holds for all multi-graphs $\tilde{G}$ with $m_\theta(\tilde{G})=k-1$.

As $G$ has no $\theta$--Aomoto subset, it follows that at least one $\theta$--critical component $H$ of $G$ is not a tree. Indeed, if all $\theta$--critical components of $G$ were trees, then, by Corollary~\ref{cor:refined_aomoto_in_gallai_edmonds} and Theorem~\ref{thm:matched_special}, $0^G_\theta$ would be a refined $\theta$--Aomoto subset. By Theorem~\ref{thm:theta_critical_cycle}, $H$ contains a $\theta$--critical cycle $C$. By Corollary~\ref{cor:critical_in_comp_equiv_critical}, $C$ is also a $\theta$--critical cycle of $G$. Hence $m_\theta(G\setminus C)=m_\theta(G)-1$. 

By Proposition~\ref{prop:removing_critical_cycle}, $G\setminus C$ has no $\theta$--Aomoto subset. Since $m_\theta(G\setminus C) = k-1$, the induction hypothesis guarantees the existence of disjoint cycles $C_1,\dots, C_{k-1}$ such that $m_\theta((G\setminus C)\setminus (C_1\sqcup\cdots\sqcup C_{k-1})) = 0$. If we define $C_k\defeq C$, then $m_\theta(G\setminus (C_1\sqcup\cdots\sqcup C_k)) = 0$, completing the proof.
\end{proof}  

In the same way that Lemma~\ref{lemma:factor_critical_case} provides a simple proof of Theorem~\ref{thm:theta_critical_cycle} in the classical case, it is also possible to obtain a more direct proof of Theorem~\ref{thm:main_theorem_technical} in this setting. This can be done by using the classical Gallai--Edmonds decomposition and the theory developed in Lovász and Plummer's book~\cite{lovasz1986matching}.

We now proceed to the proof of Theorem~\ref{thm:density_of_states_refined}. We start with the following result, which will allow us to define the maximal refined $\theta$--Aomoto subset of a multi-graph.

\begin{proposition}\label{prop:refined_aomoto_union} Let $S$ and $S'$ be refined $\theta$--Aomoto subsets of $G$. Then $S\cup S'$ is a refined $\theta$--Aomoto subset of $G$.
\end{proposition}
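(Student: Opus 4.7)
The plan is to invoke Corollary~\ref{cor:refined_aomoto_in_gallai_edmonds} and work at the level of $\theta$--critical tree components of $G$. Write $S=T_1\sqcup\cdots\sqcup T_k$ and $S'=T'_1\sqcup\cdots\sqcup T'_\ell$, where each $T_i$ and $T'_j$ is a $\theta$--critical tree component of $G$. Since the $\theta$--critical components of $G$ are the connected components of $G[0^G_\theta]$, any two of them are either equal or vertex-disjoint. Hence the collection $\mathcal{F}''\defeq\{T_1,\dots,T_k\}\cup\{T'_1,\dots,T'_\ell\}$ consists of pairwise disjoint $\theta$--critical tree components of $G$ whose union is exactly $S\cup S'$. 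In particular $G[S\cup S']$ is a forest, each of its components is $\theta$--critical, and the components of $G[S\cup S']$ are precisely the elements of $\mathcal{F}''$.

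What remains is to verify the Hall-type condition: every $\emptyset\neq U\subseteq\partial_G(S\cup S')$ is adjacent to at least $\lvert U\rvert+1$ elements of $\mathcal{F}''$. Because $\theta$--critical components of $G$ are pairwise non-adjacent, $\partial_G(S\cup S')\subseteq\partial_G S\cup\partial_G S'$. Given such a $U$, I would split it as $U=U_1\sqcup U'$, where $U_1\defeq U\cap\partial_G S$ and $U'\defeq U\setminus\partial_G S$; then $U'\subseteq\partial_G S'$, and by construction no vertex of $U'$ has a neighbor in $S$.

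If $U_1=\emptyset$, then $U=U'\subseteq\partial_G S'$ and the Hall condition for $S'$ immediately yields at least $\lvert U\rvert+1$ components in $\mathcal{F}'\subseteq\mathcal{F}''$ adjacent to $U$; the case $U'=\emptyset$ is symmetric. Otherwise both $U_1$ and $U'$ are nonempty, and the Hall conditions for $S$ and $S'$ respectively give at least $\lvert U_1\rvert+1$ components of $\mathcal{F}$ adjacent to $U_1$ and at least $\lvert U'\rvert+1$ components of $\mathcal{F}'$ adjacent to $U'$. These two collections of components are disjoint: if some $T\in\mathcal{F}\cap\mathcal{F}'$ were adjacent to a vertex $v\in U'$, then $v$ would lie in $\partial_G S$, contradicting $v\in U'=U\setminus\partial_G S$. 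Adding these disjoint contributions gives at least $(\lvert U_1\rvert+1)+(\lvert U'\rvert+1)=\lvert U\rvert+2$ components of $\mathcal{F}''$ adjacent to $U$, which is even stronger than required.

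The main obstacle I anticipate is the potential overlap between the components of $\mathcal{F}$ adjacent to $U_1$ and those of $\mathcal{F}'$ adjacent to $U'$, which would spoil a naive inclusion--exclusion estimate. The splitting $U=U_1\sqcup U'$, together with the observation that vertices of $U'$ have no neighbor in $S$, sidesteps this by forcing the two collections to be disjoint, so that simple addition suffices.
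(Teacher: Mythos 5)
Your proposal is correct and follows essentially the same route as the paper: both use Corollary~\ref{cor:refined_aomoto_in_gallai_edmonds} to see that $S\cup S'$ is a disjoint union of $\theta$--critical tree components, and then verify the Hall-type condition by splitting $U$ into $U\cap\partial_G S$ and $U\setminus\partial_G S$ and adding the two contributions, which are disjoint because vertices of $U\setminus\partial_G S$ have no neighbor in $S$. The only difference is cosmetic bookkeeping (your explicit families $\mathcal{F},\mathcal{F}',\mathcal{F}''$ versus the paper's direct counting).
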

\begin{proof} By Corollary~\ref{cor:refined_aomoto_in_gallai_edmonds}, both $S$ and $S'$ are disjoint unions of $\theta$--critical components of $G$ that are trees. As a consequence, $S \cup S'$ is also a disjoint union of $\theta$--critical components of $G$ that are trees. Also note that $\partial_G (S\cup S')=\partial_G S\cup \partial_G S'$. 

Now, consider $\emptyset \neq U \subseteq \partial_G (S \cup S')$. If $U \subseteq \partial_G S$ or $U \subseteq \partial_G S'$, then $U$ is adjacent to at least $\lvert U \rvert + 1$ components of $G[S \cup S']$.  Thus, assume instead that $\emptyset \neq U \cap \partial_G S \subseteq \partial_G S$ and $\emptyset \neq U \setminus \partial_G S \subseteq \partial_G S'$. Then, $U \cap \partial_G S$ is adjacent to at least $\lvert U \cap \partial_G S \rvert + 1$ components of $G[S]$, and $U \setminus \partial_G S$ is adjacent to at least $\lvert U \setminus \partial_G S \rvert + 1$ components of $G[S']$, none of which are components of $G[S]$. Hence, $U$ is adjacent to at least  
\[
(\lvert U \cap \partial_G S \rvert + 1) + (\lvert U \setminus \partial_G S \rvert + 1) = \lvert U \rvert + 2
\]  
\noindent components of $G[S \cup S']$.  

This shows that $S \cup S'$ is a refined $\theta$--Aomoto subset.  
\end{proof}

The next result explains why a maximal refined $\theta$--Aomoto subset should be the unique maximizer in Theorem~\ref{thm:density_of_states_refined}.

\begin{proposition}\label{prop:refined_aomoto_order} Let $S$ and $S'$ be refined $\theta$--Aomoto subsets of $G$ with $S \subsetneq S'$. Then  
\[
\mathrm{cc}(G[S]) - \lvert \partial_G S \rvert < \mathrm{cc}(G[S']) - \lvert \partial_G S' \rvert.
\] 
\end{proposition}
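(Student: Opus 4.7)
The plan is to write $S'$ as the disjoint union $S \sqcup S''$ where $S''$ collects the components of $G[S']$ that are not in $S$. This decomposition is legitimate because, by Corollary~\ref{cor:refined_aomoto_in_gallai_edmonds}, both $S$ and $S'$ are disjoint unions of $\theta$--critical components of $G$, and $S \subsetneq S'$ forces $S$ to be a proper sub-family of the components that make up $S'$. In particular, $S'' \neq \emptyset$ and $\mathrm{cc}(G[S']) = \mathrm{cc}(G[S]) + \mathrm{cc}(G[S''])$.

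Next, I would establish the boundary identity $\partial_G S' = \partial_G S \cup \partial_G S''$, together with the disjointness properties $\partial_G S \cap S'' = \emptyset$ and $\partial_G S'' \cap S = \emptyset$. The key observation here is that the $\theta$--critical components of $G$ are precisely the connected components of $G[0^G_\theta]$, so any vertex of $0^G_\theta$ adjacent to $S$ must itself lie in the same component as its neighbor, hence in $S$. Since $S, S'' \subseteq 0^G_\theta$ consist of distinct components, vertices of $S''$ cannot lie in $\partial_G S$, and vice versa. These facts then give
\[
\mathrm{cc}(G[S']) - \lvert\partial_G S'\rvert = \bigl(\mathrm{cc}(G[S]) - \lvert\partial_G S\rvert\bigr) + \mathrm{cc}(G[S'']) - \lvert\partial_G S'' \setminus \partial_G S\rvert,
\]
so the proposition reduces to proving the strict inequality $\mathrm{cc}(G[S'']) > \lvert\partial_G S'' \setminus \partial_G S\rvert$.

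Set $U \defeq \partial_G S'' \setminus \partial_G S \subseteq \partial_G S'$. If $U = \emptyset$, then the inequality holds trivially because $S'' \neq \emptyset$, so $\mathrm{cc}(G[S'']) \geq 1$. Otherwise, I apply the refined matching condition for $S'$: $U$ is adjacent to at least $\lvert U\rvert + 1$ components of $G[S']$. The crux is that $U$ cannot be adjacent to any component of $G[S']$ contained in $S$; indeed, a vertex of $U$ adjacent to such a component would lie in $\partial_G S$, contradicting $U \cap \partial_G S = \emptyset$. Therefore every one of these $\lvert U\rvert + 1$ components lies in $S''$, giving $\mathrm{cc}(G[S'']) \geq \lvert U\rvert + 1 > \lvert U\rvert$, as required.

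I do not expect a serious obstacle: the only nontrivial ingredient is translating the refined $\theta$--Aomoto condition via Corollary~\ref{cor:refined_aomoto_in_gallai_edmonds} so that components of $G[S]$ and $G[S']$ are actual $\theta$--critical components of $G$, which is what guarantees the clean behaviour of frontiers described above. Once that is in hand, the argument is a direct counting via the Hall-type matching condition built into the definition of a refined $\theta$--Aomoto subset.
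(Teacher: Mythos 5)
Your proposal is correct and takes essentially the same route as the paper: use Corollary~\ref{cor:refined_aomoto_in_gallai_edmonds} to see that the components of $G[S]$ are among those of $G[S']$, then apply the Hall-type condition of $S'$ to the set $U=\partial_G S''\setminus\partial_G S$ (which equals $\partial_G S'\setminus\partial_G S$, the set the paper uses) and observe that none of the $\lvert U\rvert+1$ components it meets can lie in $S$. Your explicit decomposition $S'=S\sqcup S''$ and the frontier identity are just a more detailed bookkeeping of the paper's one-line counting, so there is no substantive difference.
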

\begin{proof} Note that, by Corollary~\ref{cor:refined_aomoto_in_gallai_edmonds}, all components of $G[S]$ are components of $G[S']$, $\mathrm{cc}(G[S]) < \mathrm{cc}(G[S'])$ and $\partial_G S \subseteq \partial_G S'$. If $\partial_G S = \partial_G S'$, then the result immediately follows. Therefore, assume that $\partial_G S' \setminus \partial_G S \neq \emptyset$. Then, $\partial_G S' \setminus \partial_G S$ is adjacent to at least $\lvert \partial_G S' \setminus \partial_G S \rvert + 1$ components of $G[S']$, none of which are components of $G[S]$. Hence,  
\[
\mathrm{cc}(G[S']) \geq \mathrm{cc}(G[S]) + \lvert \partial_G S' \setminus \partial_G S \rvert + 1,
\]
\noindent which, upon rearranging, proves the statement.
\end{proof}

Now we are ready for the proof of Theorem~\ref{thm:density_of_states_refined}.

\begin{proof}[Proof of Theorem~\ref{thm:density_of_states_refined}] Note that, by Proposition~\ref{prop:aomoto_implies_refined_aomoto}, the maximization problem in Theorem~\ref{thm:density_of_states} has a maximizer that is a refined $\theta$--Aomoto subset. Therefore,  
\begin{equation}\label{eq:refined_maximization}
\tau(\theta)=\max_{S\in\mathcal{R}_\theta^G} \frac{\mathrm{cc}(G[S]) -\lvert\partial_G S \rvert}{\lvert V(G) \rvert}.
\end{equation}
Let $\tilde{S}$ denote the union of all refined $\theta$--Aomoto subsets of $G$. By Proposition~\ref{prop:refined_aomoto_union}, $\tilde{S}$ is the maximal refined $\theta$--Aomoto subset of $G$. Moreover, by Proposition~\ref{prop:refined_aomoto_order}, we have  
\[
\mathrm{cc}(G[S]) - \lvert \partial_G S \rvert < \mathrm{cc}(G[\tilde{S}]) - \lvert \partial_G \tilde{S} \rvert,
\]  
\noindent for every refined $\theta$--Aomoto subset $S \subsetneq \tilde{S}$. Hence, $\tilde{S}$ is the unique maximizer in the maximization problem stated in Equation~\ref{eq:refined_maximization}.
\end{proof}


\section*{Acknowledgements}

The author is grateful to Wendo Li, Michael Magee, Mostafa Sabri, and Joe Thomas for their comments on a previous version of this article. The author also acknowledges fruitful conversations on the topic of this paper with Chris Godsil. Some of the results presented here were obtained while the author was a Ph.D. student at IMPA, Brazil.

\bibliographystyle{plain}
\IfFileExists{references.bib}
{\bibliography{references.bib}}
{\bibliography{../references}}

\begin{thebibliography}{10}

\bibitem{aomoto1988algebraic}
K.~Aomoto.
\newblock Algebraic equations for green kernel on a tree.
\newblock {\em Proceedings of the Japan Academy, Series A, Mathematical Sciences}, 64:123--125, 1988.

\bibitem{aomoto1991point}
K.~Aomoto.
\newblock Point spectrum on a quasihomogeneous tree.
\newblock {\em Pacific J. Math.}, 147:231--242, 1991.

\bibitem{arizmendi2024universality}
Octavio Arizmendi, Guillaume C{\'e}bron, Roland Speicher, and Sheng Yin.
\newblock Universality of free random variables: atoms for non-commutative rational functions.
\newblock {\em Advances in Mathematics}, 443:109595, 2024.

\bibitem{avni2020periodic}
N.~Avni, J.~Breuer, and B.~Simon.
\newblock Periodic {J}acobi matrices on trees.
\newblock {\em Adv. Math.}, 370:107241, 2020.

\bibitem{avni2022periodic}
Nir Avni, Jonathan Breuer, Gil Kalai, and Barry Simon.
\newblock Periodic boundary conditions for periodic {J}acobi matrices on trees.
\newblock {\em Pure and Applied Functional Analysis}, 7(2):489--502, 2022.

\bibitem{banks2024useful}
Jess Banks, Jonathan Breuer, Jorge Garza-Vargas, Eyal Seelig, and Barry Simon.
\newblock A useful formula for periodic {J}acobi matrices on trees.
\newblock {\em Proceedings of the National Academy of Sciences}, 121(23):e2315218121, 2024.

\bibitem{banks2022point}
Jess Banks, Jorge Garza-Vargas, and Satyaki Mukherjee.
\newblock Point spectrum of periodic operators on universal covering trees.
\newblock {\em International Mathematics Research Notices}, 2022(22):17713--17744, 2022.

\bibitem{bencs2022atoms}
Ferenc Bencs and Andr{\'a}s M{\'e}sz{\'a}ros.
\newblock Atoms of the matching measure.
\newblock {\em Electronic Journal of Probability}, 27:1--38, 2022.

\bibitem{bordenave2019eigenvalues}
C.~Bordenave and B.~Collins.
\newblock Eigenvalues of random lifts and polynomials of random permutation matrices.
\newblock {\em Ann. of Math.(2)}, 190:811--875, 2019.

\bibitem{christiansen2021remarks}
J.~S. Christiansen, B.~Simon, and M.~Zinchenko.
\newblock Remarks on periodic {J}acobi matrices on trees.
\newblock {\em J. Math. Phys.}, 62:042101, 2021.

\bibitem{coutinho2023strong}
Gabriel Coutinho, Emanuel Juliano, and Thom{\'a}s~Jung Spier.
\newblock Strong cospectrality in trees.
\newblock {\em Algebraic Combinatorics}, 6(4):955--963, 2023.

\bibitem{edmonds1965paths}
J.~Edmonds.
\newblock Paths, trees, and flowers.
\newblock {\em Canad. J. Math.}, 17:449--467, 1965.

\bibitem{figa1994harmonic}
Alessandro Figa-Talamanca and Tim Steger.
\newblock Harmonic analysis for anisotropic random walks on homogeneous trees, 1994.
\newblock {\em Memoirs of the American Mathematical Society}, 110(531):531, 1994.

\bibitem{gallai1963kritische}
T.~Gallai.
\newblock Kritische graphen {II}.
\newblock {\em Magyar Tud. Akad. Mat. Kutato Int. Kozl.}, 8:373--395, 1963.

\bibitem{garza2023spectra}
Jorge Garza-Vargas and Archit Kulkarni.
\newblock Spectra of infinite graphs via freeness with amalgamation.
\newblock {\em Canadian Journal of Mathematics}, 75(5):1633--1684, 2023.

\bibitem{godsil_matchings}
C.~D. Godsil.
\newblock Matchings and walks in graphs.
\newblock {\em J. Graph Theory}, 5:285--297, 1981.

\bibitem{godsil1995algebraic}
C.~D. Godsil.
\newblock Algebraic matching theory.
\newblock {\em Electron. J. Combin.}, 2:R8, 1995.

\bibitem{godsil1978matching}
C.~D. Godsil and I.~Gutman.
\newblock On the matching polynomial of a graph.
\newblock {\em Algebraic Methods in Graph Theory}, pages 241--249, 1981.

\bibitem{gross2001topological}
Jonathan~L Gross and Thomas~W Tucker.
\newblock {\em Topological graph theory}.
\newblock Courier Corporation, 2001.

\bibitem{gutman1986some}
I.~Gutman.
\newblock Some relations for graphic polynomials.
\newblock {\em Publications de L'Institut Mathematique, Nouvelle Serie Tome}, 39 (53):55--62, 1986.

\bibitem{gutman1981cyclic}
I.~Gutman and O.~E. Polansky.
\newblock Cyclic conjugation and the {H}\"uckel molecular orbital model.
\newblock {\em Theoret. Chim. Acta}, 60:203--226, 1981.

\bibitem{harary1962determinant}
Frank Harary.
\newblock The determinant of the adjacency matrix of a graph.
\newblock {\em Siam Review}, 4(3):202--210, 1962.

\bibitem{heilmann-lieb}
O.~J. Heilmann and E.~H. Lieb.
\newblock Theory of monomer-dimer systems.
\newblock {\em Comm. Math. Phys.}, 25:190--232, 1972.

\bibitem{higuchi2009spectral}
Yusuke Higuchi and Yuji Nomura.
\newblock Spectral structure of the laplacian on a covering graph.
\newblock {\em European Journal of Combinatorics}, 30(2):570--585, 2009.

\bibitem{johnson2018eigenvalues}
C.~R. Johnson and C.~M. Saiago.
\newblock {\em Eigenvalues, multiplicities and graphs}.
\newblock Cambridge University Press, (2018).

\bibitem{ku2010analogue}
C.~Y. Ku and W.~Chen.
\newblock An analogue of the {G}allai--{E}dmonds structure theorem for non-zero roots of the matching polynomial.
\newblock {\em J. Combin. Theory Ser. B}, 100:119--127, 2010.

\bibitem{ku2009maximum}
C.~Y. Ku and K.~B. Wong.
\newblock Maximum multiplicity of a root of the matching polynomial of a tree and minimum path cover.
\newblock {\em Electron. J. Combin.}, 16:R81, 2009.

\bibitem{ku2010extensions}
C.~Y. Ku and K.~B. Wong.
\newblock Extensions of barrier sets to nonzero roots of the matching polynomial.
\newblock {\em Discrete Math.}, 310:3544--3550, 2010.

\bibitem{ku2013gallai}
C.~Y. Ku and K.~B. Wong.
\newblock Gallai--{E}dmonds structure theorem for weighted matching polynomial.
\newblock {\em Linear Algebra Appl.}, 439:3387--3411, 2013.

\bibitem{li2025eigenvalues}
Wenbo Li, Michael Magee, Mostafa Sabri, and Joe Thomas.
\newblock Eigenvalues of maximal abelian covers.
\newblock {\em arXiv preprint arXiv:2508.17332}, 2025.

\bibitem{lovasz1986matching}
L.~Lov{\'a}sz and M.~Plummer.
\newblock Matching theory.
\newblock {\em North-{H}olland mathematics studies}, 121, 1986.

\bibitem{interlacing_familiesI}
A.~Marcus, D.~A. Spielman, and N.~Srivastava.
\newblock Interlacing families {I}: Bipartite {R}amanujan graphs of all degrees.
\newblock {\em Ann. of Math.(2)}, 182:307--325, 2015.

\bibitem{sabri2023flat}
Mostafa Sabri and Pierre Youssef.
\newblock Flat bands of periodic graphs.
\newblock {\em Journal of Mathematical Physics}, 64(9), 2023.

\bibitem{spier2021graph}
Thom{\'a}s~Jung Spier.
\newblock {\em Graph Continued Fractions}.
\newblock PhD thesis, 2021.

\bibitem{spier2023refined}
Thom{\'a}s~Jung Spier.
\newblock A refined {G}allai-{E}dmonds structure theorem for weighted matching polynomials.
\newblock {\em Discrete Mathematics}, 346(3):113244, 2023.

\bibitem{viennot1985combinatorial}
G.~Viennot.
\newblock A combinatorial theory for general orthogonal polynomials with extensions and applications.
\newblock {\em Polyn{\^o}mes Orthogonaux et Applications}, pages 139--157, 1985.

\end{thebibliography}

	
\end{document}